\tikzset{->-/.style={decoration={
  markings,
  mark=at position #1 with {\arrow{>}}},postaction={decorate}}}
\renewcommand{\p@subfigure}{}
\renewcommand{\@thesubfigure}{(\alph{subfigure})\hskip\subfiglabelskip}
\numberwithin{equation}{section}
\numberwithin{figure}{section}
\numberwithin{table}{section}
\theoremstyle{plain}
\newtheorem{theorem}{Theorem}
  \newtheorem{cor}[theorem]{Corollary}
  \newtheorem{question}[theorem]{Question}
  \theoremstyle{definition}
  \newtheorem{prop}[theorem]{Proposition}
  \numberwithin{theorem}{section}
\renewcommand{\P}{\mathcal{P}}
\newcommand{\Q}{\mathcal{Q}}
\newcommand{\G}{\mathcal{G}}
\newcommand{\K}{\mathcal{K}}
\newcommand{\B}{\mathcal{B}}
\newcommand{\C}{\mathcal{C}}
\newcommand{\F}{\mathcal{F}}
\newcommand{\up}{\mathcal{U}}
\newcommand{\down}{\mathcal{D}}
\newcommand{\cH}{\mathcal{H}}
\renewcommand{\diamond}{\diamondsuit}
\newcommand{\lu}{\operatorname{lu}}
\newcommand{\A}{\mathcal{A}}
\newcommand{\OR}{\operatorname{OR}}
\newcommand{\BR}{\operatorname{BR}}
\newcommand{\R}{\operatorname{R}}
\newcommand{\CR}{\operatorname{CR}}
\newcommand{\PR}[1]{\operatorname{R}_{#1}}
\newcommand{\levels}{e}
\begin{document}

\title{Ramsey numbers for partially-ordered sets}

\author{Christopher Cox$^1$ \and Derrick Stolee$^2$}

\date{\today}

\begin{abstract}
We present a refinement of Ramsey numbers by considering graphs with a partial ordering on their vertices.
This is a natural extension of the ordered Ramsey numbers.
We formalize situations in which we can use arbitrary families of partially-ordered sets to form host graphs for Ramsey problems.
We explore connections to well studied Tur\'an-type problems in partially-ordered sets, particularly those in the Boolean lattice.
We find a strong difference between Ramsey numbers on the Boolean lattice and ordered Ramsey numbers when the partial ordering on the graphs have large antichains.
\end{abstract}

\maketitle

\footnotetext[1]{Carnegie Mellon University, Pittsburgh, PA, USA. \texttt{cocox@andrew.cmu.edu}.}
\footnotetext[2]{Microsoft, Raleigh, NC, USA. \texttt{dstolee@microsoft.com}; Work completed while at Iowa State University.}

\section{Introduction}

Ramsey and Tur\'an problems are fundamental to graph theory.
Tur\'an problems focus on the maximum size of objects that forbid a certain substructure whereas Ramsey problems concern partitioning an object into parts where each part forbids a certain substructure.
Traditionally, these problems are considered in the domain of graphs.
Recently, Ramsey problems have been extended to graphs with a total ordering on their vertices~\cite{BCKK13,CP02,CGKVV14,CFLS14,CS15,FPSS12,MSW15,MS14}, and Tur\'an problems have been considered within the Boolean lattice~\cite{DK07,DKS05,GL13,GL15,GL09,GMT14,KMY13}.
We unite and generalize these concepts into Ramsey theory on partially-ordered sets. 

Ramsey numbers describe the transition where it becomes impossible to partition a complete graph into $t$ parts such that each part does not contain a certain subgraph.
For $k$-uniform hypergraphs $G_1,\dots,G_t$, the \emph{$t$-color graph Ramsey number} $\R^k(G_1,\dots,G_t)$ is the least integer $N$ such that any $t$-coloring of the edges of the $k$-uniform complete graph on $N$ vertices contains a copy of $G_i$ in color $i$ for some $i \in \{1,\dots,t\}$; when $G_1 = \cdots G_t = G$, we shorten the notation to $\R_t^k(G)$.
Since $\R_t^k(K_n)$ is finite for all $t$ and $n$, all Ramsey numbers exist, including the generalizations we discuss in this paper.
In our notation for Ramsey numbers, we use $k$ to emphasize that $G_1,\dots,G_t$ are $k$-uniform graphs.

A $k$-uniform \emph{ordered hypergraph} is a $k$-uniform hypergraph $G$ with a total order on the vertex set $V(G)$.
An ordered hypergraph $G$ \emph{contains} another ordered hypergraph $H$ exactly when there exists an embedding of $H$ in $G$ that preserves the vertex order.
For ordered $k$-uniform hypergraphs $G_1,\dots,G_t$, the \emph{ordered Ramsey number} $\OR^k(G_1,\dots,G_t)$ is the least integer $N$ such that every $t$-coloring of the edges of the complete $k$-uniform graph with vertex set $\{1,\dots,N\}$ contains an ordered copy of $G_i$ in color $i$ for some $i \in \{1,\dots,t\}$.
Since there is essentially one ordering of the complete graph, $\OR^k(G_1,\dots,G_t) \leq \R_t^k(K_n)$ for $n = \max\{ |V(G_i)| : i \in \{1,\dots,t\}\}$.
In general, $\OR_t^k(G)$ can be much larger than $\R_t^k(G)$, such as when $G$ is an ordered path.
Ordered Ramsey numbers on ordered paths have deep connections to the Erd\H{o}s-Szekeres Theorem and the Happy Ending Problem~\cite{ES35} (see~\cite{FPSS12,MS14}).

A \emph{partially-ordered set}, or \emph{poset}, is a pair $(X,\leq)$ where  $X$ is a set and $\leq$ is a relation such that $\leq$ is reflexive, anti-symmetric, and transitive.
A pair $x,y \in X$ is \emph{comparable} if $x\leq y$ or $y \leq x$, and a \emph{$k$-chain} is a set of $k$ distinct, pairwise comparable elements.
If $P$ and $Q$ are posets, then an injection $f : P \to Q$ is a \emph{weak embedding} if $f(x) \leq f(y)$ when $x \leq y$; we say that $f(P)$ is a \emph{copy} of $P$ in $Q$ and say that $Q$ is \emph{$P$-free} if there is no copy of $P$ in $Q$.
An injection $f : P \to Q$ is a \emph{strong embedding} if $f(x) \leq f(y)$ if and only if $x \leq y$; we say that $f(P)$ is an \emph{induced copy} of $P$ in $Q$.

A $k$-uniform \emph{partially-ordered hypergraph}, or \emph{pograph}, is a $k$-uniform hypergraph $H$ and a relation $\leq$ such that $(V(H),\leq)$ is a poset and every edge in the edge set $E(H)$ is a $k$-chain in $(V(H),\leq)$; note that it is not necessary that every $k$-chain be an edge.
If $H$ and $G$ are $k$-uniform pographs on posets $P$ and $Q$, then $G$ \emph{contains} a copy of $H$ if there is a weak embedding $f : P \to Q$ such that the graph $f(H)$ is a subgraph of $G$.
Let $\P = \{ P_n : n \geq 1\}$ be a family of posets such that $P_n\subseteq P_{n+1}$ for each $n$ and let $H_1,\dots,H_t$ be $k$-uniform pographs.
The \emph{partially-ordered Ramsey number} $\PR{\P}^k(H_1,\dots,H_t)$ is the minimum $N$ such that every $t$-coloring of the $k$-chains of $P_N$ contains a copy of $H_i$ in color $i$ for some $i \in \{1,\dots,t\}$.

The pographs $H_1,\dots,H_t$ are contained within ordered hypergraphs $G_1,\dots,G_t$ by extending the partial order to a total order; if $P_n$ contains a chain of size $\OR^k(G_1,\dots,G_t)$, then $\PR{\P}^k(H_1,\dots,H_t)\leq n$.
Thus, partially-ordered Ramsey numbers exist whenever the family $\P$ has unbounded height.
This is not a requirement, and we discuss several interesting poset families and their relations to other Ramsey numbers in Section~\ref{sec:relations}.
For the majority of this paper, we will focus on two natural poset families and use special notation to describe their Ramsey numbers.
Let $H_1,\dots,H_t$ be $k$-uniform pographs.
\begin{enumerate}
\item Let $C_n$ be a chain of $n$ elements, $\C = \{C_n : n \geq 1\}$, and define the \emph{chain Ramsey number} $\CR^k(H_1,\dots,H_t) = \PR{\C}^k(H_1,\dots,H_t)$.
\item Let $B_n$ be the Boolean lattice of subsets of $\{1,\dots,n\}$, $\B = \{ B_n : n \geq 1\}$, and define the \emph{Boolean Ramsey number} $\BR^k(H_1,\dots,H_t) = \PR{\B}^k(H_1,\dots,H_t)$.
\end{enumerate}
When $H_1 = \cdots = H_t = H$, we shorten our notation to $\CR_t^k(H) = \CR^k(H_1,\dots,H_t)$ and $\BR_t^k(H) = \BR^k(H_1,\dots,H_t)$.
The 2-uniform chain Ramsey numbers are a slight generalization of both ordered Ramsey numbers (if $H_1,\dots,H_t$ are totally ordered) and the \emph{directed Ramsey numbers\footnote{In \cite{CP02} these are called ordered Ramsey numbers.
See \cite{MSW15} for a detailed discussion about the distinction.}} defined by Choudum and Ponnusammy~\cite{CP02}, which consider coloring the edges of the transitive tournament to avoid monochromatic copies of certain directed acyclic graphs.

We mainly focus on 1- and 2-uniform Boolean Ramsey numbers, generalizing to other families only when the proof method is identical.
The 2-uniform Boolean Ramsey numbers are an interesting generalization of 2-uniform ordered Ramsey numbers, and we discuss them in Section~\ref{sec:2unif}.

Ramsey theory on posets was initiated by Ne{\v{s}}et{\v{r}}il and R\"odl \cite{nevsetvril1984combinatorial} focusing on induced copies of posets.
Many results~\cite{duffus1991fibres,kierstead1987ramsey,mccolm1991ramseyian,trotter1999ramsey} continue this perspective.
Recently, others~\cite{axenovich2015boolean,GRS99,JLM13} focused specifically on the 1-uniform Boolean Ramsey problem finding induced copies of posets inside the Boolean lattice. Our motivation for studying the non-induced situation stems from generalizing the notion of ordered Ramsey numbers, but several of the techniques used to bound induced Ramsey numbers apply in our situation.
We discuss 1-uniform Boolean Ramsey numbers in Section~\ref{sec:1unif}.
There is little interest in 1-uniform chain Ramsey numbers of graphs as they can be determined by basic application of the pigeonhole principle.
The 1-uniform Boolean Ramsey numbers relate to the very active area of 1-uniform Tur\'an problems in the Boolean lattice~\cite{DK07,DKS05,GL13,GL15,GL09,GMT14,KMY13}.
This area dates back to Sperner~\cite{Sperner28} who showed that the largest family of $B_n$ that does not contain a comparable pair has size ${n\choose\lfloor n/2\rfloor}$. 
These problems ask for the largest collection of elements in the Boolean lattice whose induced subposet does not contain a copy of a specific poset $P$.

Finding an exact value of a Boolean Ramsey number is very difficult.
We discuss computational methods to find small Boolean Ramsey numbers in Section~\ref{sec:computation}.

\subsection{Notation and Common Posets}

We follow standard notation from \cite{West96}. 
For integers $m\leq n$, we let $[n]=\{1,\dots,n\}$ and $[m,n]=\{m,m+1,\dots,n-1,n\}$. For a set $X$ and integer $d$, we let ${X\choose d}$ denote the set of all $d$-element subsets of $X$.
%For $k\geq 2$, we denote the complete $k$-uniform (ordered) hypergraph on $n$ vertices by $K_n^k$, and as the $2$-uniform case is special, $K_n$ denotes $K_n^2$.
We use $\lg n=\log_2 n$ for shorthand.

For a poset $P$ and an element $x\in P$, we use $\down(x)=\{y\in P:y\leq x\}$ and $\up(x)=\{y\in P:x\leq y\}$, called the \emph{down-set of $x$} and the \emph{up-set of $x$} respectively. 
For a $t$-coloring $c$ of the 2-chains in $P$, an element $x \in P$, and a color $i \in [t]$, we define the \emph{$i$-colored down-set of $x$}, denoted $\down_i(x)$, to be the elements $y < x$ such that $c(yx) = i$; similarly the \emph{$i$-colored up-set of $x$}, denoted $\up_i(x)$, is the set of elements $y > x$ such that $c(xy) = i$.
The \emph{height} of a poset $P$, denoted $h(P)$, is the maximum size of a chain in $P$.

When we discuss 1-uniform pographs, we define only the poset and assume the set of ``edges'' is the same as the set of elements. 
In the case of 2-uniform pographs, we have two natural options for the edge set.
For a poset $(P,\leq)$, the \emph{comparability graph} is the pograph with vertex set $P$ and an edge $uv$ if and only if $u < v$.
The \emph{Hasse diagram} of $(P,\leq)$ is the pograph with vertex set $P$ and an edge $uv$ if and only if $u < v$ and there does not exist an element $w$ such that $u < w$ and $w < v$; such pairs $uv$ are \emph{cover relations}.
When we draw a pograph, adjacent vertices are comparable with the comparison ordered by height. 
We will focus mainly on a few natural 2-uniform pographs.

%\begin{figure}[tp]
%\centering
%\begin{subfigure}{.2\textwidth}
%\input{figures/cupr.tex}
%\caption{The $r$-cup, $\vee_r$.\label{fig:cup}}
%\end{subfigure}
%\begin{subfigure}{.2\textwidth}
%\input{figures/capr.tex}
%\caption{The $r$-cap, $\wedge_r$.\label{fig:cap}}
%\end{subfigure}
%\begin{subfigure}{.2\textwidth}
%\input{figures/diamondr.tex}
%\caption{The $r$-diamond, $\diamond_r$.\label{fig:diamond}}
%\end{subfigure}
%\begin{subfigure}{.2\textwidth}
%\input{figures/butterflyrs.tex}
%\caption{The $r,s$-butterfly, $\bowtie_r^s$.\label{fig:butterfly}}
%\end{subfigure}
%\caption{The cup, cap, diamond, and butterfly pographs, respectively.\label{fig:cupcapdiamond}}
%\end{figure}

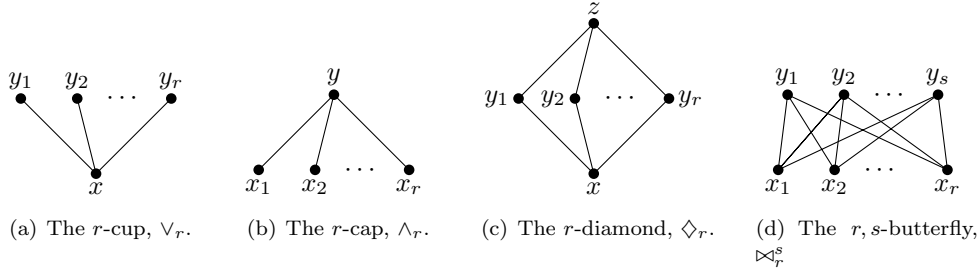
\begin{figure}[tp]
\centering
\subfigure[][{The $r$-cup, $\vee_r$.}]{\label{fig:cup}\begin{tikzpicture}[vtx/.style={shape=coordinate}]
\node[vtx,label=above:$y_1$] (y1) at (-1,1){};
\node[vtx,label=above:$y_2$] (y2) at (-.25,1) {};
\node[vtx,label=center:$\cdots$] (ydots) at (.375,1) {};
\node[vtx,label=above:$y_r$] (yr) at (1,1) {};
\node[vtx,label=below:$x$] (x) at (0,0){};

\draw (y1)--(x)--(y2)  (x)--(yr);

\foreach \p in {x,y1,y2,yr}
{
	\fill (\p) circle (2pt);
}
\end{tikzpicture} }
\quad
\subfigure[][\label{fig:cap}The $r$-cap, $\wedge_r$.]{\begin{tikzpicture}[vtx/.style={shape=coordinate}]
\node[vtx,label=above:$y$] (1) at (0,1) {};
\node[vtx,label=below:$x_1$] (2) at (-1,0){};
\node[vtx,label=below:$x_2$] (3) at (-.25,0) {};
\node[vtx,label=center:$\cdots$] (5) at (.375,0) {};
\node[vtx,label=below:$x_r$] (6) at (1,0) {};

\draw (2)--(1)--(3)  (1)--(6);

\foreach \p in {1,2,3,6}
{
	\fill (\p) circle (2pt);
}
\end{tikzpicture} }
\quad
\subfigure[][\label{fig:diamond}The $r$-diamond, $\diamond_r$.]{\begin{tikzpicture}[vtx/.style={shape=coordinate}]
\node[vtx,label=below:$x$] (1) at (0,0) {};
\node[vtx,label=left:$y_1$] (2) at (-1,1){};
\node[vtx,label=left:$y_2$] (3) at (-.25,1) {};
\node[vtx,label=center:$\cdots$] (5) at (.375,1) {};
\node[vtx,label=right:$y_r$] (6) at (1,1) {};
\node[vtx,label=above:$z$] (7) at (0,2) {};

\draw (7)--(2)--(1)--(3)--(7)--(6)--(1);

\foreach \p in {1,2,3,6,7}
{
	\fill (\p) circle (2pt);
}
\end{tikzpicture} }
\quad
\subfigure[][\label{fig:butterfly}The $r,s$-butterfly, $\bowtie_r^s$]{\begin{tikzpicture}[vtx/.style={shape=coordinate}]
\node[vtx,label=above:$y_1$] (y1) at (-1,1){};
\node[vtx,label=above:$y_2$] (y2) at (-.25,1) {};
\node[vtx,label=center:$\cdots$] (ydots) at (.375,1) {};
\node[vtx,label=above:$y_s$] (ys) at (1,1) {};
\node[vtx,label=below:$x_1$] (x1) at (-1.125,0){};
\node[vtx,label=below:$x_2$] (x2) at (-.375,0) {};
\node[vtx,label=center:$\cdots$] (xdots) at (.25,0) {};
\node[vtx,label=below:$x_r$] (xr) at (1.125,0) {};

\draw (y1)--(x1)--(y2)--(x2)--(ys)--(xr)--(y2)--(x1);
\draw (ys)--(x1) (y1)--(x2) (xr)--(y1);

\foreach \p in {x1,x2,xr,y1,y2,ys}
{
	\fill (\p) circle (2pt);
}
\end{tikzpicture} }
\caption{The cup, cap, diamond, and butterfly pographs, respectively.\label{fig:cupcapdiamond}}
\end{figure}

\begin{figure}[tp]
\centering
\subfigure[][\label{fig:diamond2}The diamond, $\diamond=\diamond_2$.]{\hspace{0.1in}\begin{tikzpicture}[vtx/.style={shape=coordinate}]
\node[vtx,label=below:$x$] (1) at (0,0) {};
\node[vtx,label=left:$y_1$] (2) at (-1,1){};
\node[vtx,label=right:$y_2$] (6) at (1,1) {};
\node[vtx,label=above:$z$] (7) at (0,2) {};

\draw (7)--(2)--(1)--(6)--(7);

\foreach \p in {1,2,6,7}
{
	\fill (\p) circle (2pt);
}
\end{tikzpicture} \hspace{0.1in}}
\quad
\subfigure[][\label{fig:boolean2}The 2-dimensional Boolean lattice, $B_2$.]{\begin{tikzpicture}[vtx/.style={shape=coordinate}]
\node[vtx,label=below:$x$] (1) at (0,0) {};
\node[vtx,label=left:$y_1$] (2) at (-1,1){};
\node[vtx,label=right:$y_2$] (6) at (1,1) {};
\node[vtx,label=above:$z$] (7) at (0,2) {};

\draw (7)--(2)--(1)--(6)--(7)--(1);

\foreach \p in {1,2,6,7}
{
	\fill (\p) circle (2pt);
}
\end{tikzpicture} }
\quad
\subfigure[][\label{fig:butterfly2}The butterfly, $\bowtie = \bowtie_2^2$]{\hspace{0.33in}\begin{tikzpicture}[vtx/.style={shape=coordinate}]
\node[vtx,label=above:$y_1$] (y1) at (-0.5,1){};
\node[vtx,label=above:$y_2$] (ys) at (0.5,1) {};
\node[vtx,label=below:$x_1$] (x1) at (-0.5,0){};
\node[vtx,label=below:$x_2$] (xr) at (0.5,0) {};

\draw (y1)--(x1)--(ys)--(xr)--(y1);

\foreach \p in {x1,xr,y1,ys}
{
	\fill (\p) circle (2pt);
}
\end{tikzpicture} \hspace{0.33in}}
\caption{The 2-diamond, $B_2$, and $\bowtie$.\label{fig:diamondbool}}
\end{figure}
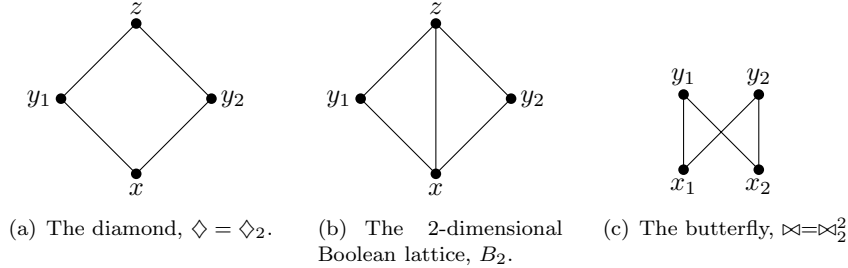

\begin{figure}[tp]
\centering
\subfigure[][\label{fig:matching}The $r$-matching, $M_r$.]{\hspace{0.1in}\begin{tikzpicture}[vtx/.style={shape=coordinate}]
\node[vtx,label=above:$y_1$] (y1) at (-1,1){};
\node[vtx,label=above:$y_2$] (y2) at (-.25,1) {};
\node[vtx,label=center:$\cdots$] (ydots) at (.375,0.5) {};
\node[vtx,label=above:$y_r$] (yr) at (1,1) {};
\node[vtx,label=below:$x_1$] (x1) at (-1,0){};
\node[vtx,label=below:$x_2$] (x2) at (-.25,0) {};
\node[vtx,label=below:$x_r$] (xr) at (1,0) {};

\draw (y1)--(x1)  (y2)--(x2) (yr)--(xr);

\foreach \p in {x1,x2,xr,y1,y2,yr}
{
	\fill (\p) circle (2pt);
}
\end{tikzpicture} \hspace{0.1in}}
\quad
\subfigure[][\label{fig:crown}The $r$-crown, $W_r$.]{\hspace{0.1in}\begin{tikzpicture}[vtx/.style={shape=coordinate}]
\node[vtx,label=above:$y_1$] (y1) at (-1,1){};
\node[vtx,label=above:$y_2$] (y2) at (-.25,1) {};
\node[vtx,label=center:$\cdots$] (ydots) at (.375,0.5) {};
\node[vtx,label=above:$y_r$] (yr) at (1,1) {};
\node[vtx,label=below:$x_1$] (x1) at (-1,0){};
\node[vtx,label=below:$x_2$] (x2) at (-.25,0) {};
\node[vtx,label=below:$x_r$] (xr) at (1,0) {};

\draw (y1)--(x1)--(y2)--(x2)--(0,0.33) (0.75,0.66)--(yr)--(xr)--(y1);

\foreach \p in {x1,x2,xr,y1,y2,yr}
{
	\fill (\p) circle (2pt);
}
\end{tikzpicture} \hspace{0.1in}}
%\quad
%\subfigure[The $r,s$-star, $T_r^s$.]{\input{figures/starrs.tex}}
%\quad
%\subfigure[The standard example, $S_n$.]{\hspace{0.25in}\input{figures/standardn.tex}\hspace{0.25in}}
\caption{The matching and the crown.\label{fig:matchingcrown}}
\end{figure}
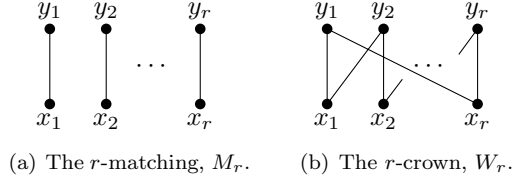

\begin{itemize}
\item The \emph{$n$-chain}, denoted $C_n$, is the Hasse diagram of $n$ totally-ordered elements.
\item The \emph{$n$-dimensional Boolean lattice}, denoted $B_n$, is the comparability graph of subsets of $[n]$ ordered by subset inclusion. See Figure~\ref{fig:boolean2} for a diagram of $B_2$.
\item The \emph{$r$-cup}, denoted $\vee_r$, is the comparability graph of the poset with elements $\{x,y_1,\dots,y_r\}$ where $x\leq y_i$ for all $i$ (see Figure~\ref{fig:cup}).
\item The \emph{$r$-cap}, denoted $\wedge_r$, is the comparability graph of the poset with elements $\{y,x_1,\dots,x_r\}$ where $x_i\leq y$ for all $i$ (see Figure~\ref{fig:cap}).
\item The \emph{$r$-diamond}, denoted $\diamond_r$, is the Hasse diagram of the poset with elements $\{x, y_1,\dots,y_r,z\}$ where $x \leq y_i \leq z$ for all $i$ (see Figures~\ref{fig:diamond} and \ref{fig:diamond2}). 
\item The \emph{$r,s$-butterfly}, denoted $\bowtie_r^s$, is the comparability graph of the poset with elements $\{x_1,\dots,x_r\} \cup \{y_1,\dots,y_s\}$ where $x_i \leq y_j$ for all $i$ and $j$; we use $\bowtie$ to denote $\bowtie_2^2$ (see Figures~\ref{fig:butterfly} and \ref{fig:butterfly2}).
\item The \emph{matching of size $n$}, denoted $M_n$, is the comparability graph of the poset with elements $\{x_1,\dots,x_n\} \cup \{y_1,\dots,y_n\}$ where $x_i\leq y_i$ for all $i$ (Figure \ref{fig:matching}).
\item The \emph{crown graph of order $n$}, denoted $W_n$, is the comparability graph of the poset with elements $\{x_1,\dots,x_n\} \cup \{y_1,\dots,y_n\}$ where $x_i\leq y_i$ and $x_i\leq y_{i+1\!\! \pmod{n}}$ for all $i$ (Figure \ref{fig:crown}).
%\item The \emph{standard graph of order $n$}, denoted $sd_n$, is the comparability graph of the poset with element $\{x_1,\dots,x_n,y_1,\dots,y_n\}$ where $x_i\leq y_j$ for all $i\neq j$ (Figure \ref{fig:standard}).
%\item The \emph{$(r,s)$-star}, denoted $S_{r,s}$ is formed by identifying the maximum element of $\wedge_s$ with the minimum element of $\vee_r$ (Figure \ref{fig:star}).
\end{itemize}

Note the difference between the 2-diamond $\diamond_2$ and the 2-dimensional Boolean lattice $B_2$.
Both pographs are defined for the same poset, but $\diamond_2$ is the Hasse diagram and hence has one fewer edge than the comparability graph in $B_2$.
This distinction leads to different values of 2-uniform Boolean Ramsey numbers; see Section~\ref{sec:computation}.
%Our use of the Hasse diagram for a chain $C_n$ is identical to the natural total-ordering of a path of order $n$.
%The comparability graph of a chain of order $n$ creates a copy of the complete graph of order $n$.

Even though we defined these symbols in terms of 2-uniform pographs, we will often use the same symbol to denote the 1-uniform pograph, which is simply the underlying poset. Whether we are discussing the 1- or 2-uniform case will always be clear from context.
%
%Additionally, keep in mind that we often use the same symbol to denote both the 1- and 2-uniform pographs associated with a given poset. Whether we are discussing the 1- or 2-uniform case will always be clear from context.

\section{1-Uniform Boolean Ramsey Numbers}\label{sec:1unif}

For a poset $P$, define $\levels(P)$ to be the maximum $m$ such that, for all $n$, the union of the middle $m$ levels of $B_n$ does not contain a copy of $P$.
The parameter $\levels(P)$ is very common in the study of Tur\'an-type problems in posets.

\begin{prop}\label{prop:1ulow}
Let $P_1,\dots,P_t$ be posets.
If $M$ is the least integer such that $P_i\subseteq B_M$ for all $i$, then
\[
\max\left\{M,\sum_{i=1}^t \levels(P_i)\right\}\leq\BR^1(P_1,\dots,P_t)\leq\sum_{i=1}^t(|P_i|-1).
\]
\end{prop}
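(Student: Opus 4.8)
The plan is to prove the three inequalities $M\le\BR^1(P_1,\dots,P_t)$, $\sum_{i=1}^t\levels(P_i)\le\BR^1(P_1,\dots,P_t)$, and $\BR^1(P_1,\dots,P_t)\le\sum_{i=1}^t(|P_i|-1)$ separately, obtaining the two lower bounds by exhibiting explicit $t$-colorings and the upper bound by a pigeonhole argument on a maximal chain. For the upper bound, put $N=\sum_i(|P_i|-1)$, fix any $t$-coloring of the elements of $B_N$, and restrict it to the chain $\emptyset\subset\{1\}\subset\dots\subset[N]$ of $N+1$ elements: were each color $i$ used on at most $|P_i|-1$ of these elements, the chain would have at most $\sum_i(|P_i|-1)=N$ elements, so some color $i$ occurs on a subchain $C$ with $|C|\ge|P_i|$; since every poset on $|P_i|$ elements weakly embeds into a chain of $|P_i|$ elements (list the elements along a linear extension), color class $i$ then contains a copy of $P_i$. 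For $M\le\BR^1(P_1,\dots,P_t)$ the case $M=0$ is vacuous, and if $M\ge1$ then minimality of $M$ gives an index $j$ with $P_j\not\subseteq B_{M-1}$; coloring all of $B_{M-1}$ with color $j$ makes color class $j$ equal to $B_{M-1}$ (hence $P_j$-free) and every other color class empty, so $\BR^1(P_1,\dots,P_t)>M-1$.

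The heart of the matter is the bound $\sum_i\levels(P_i)\le\BR^1(P_1,\dots,P_t)$. Write $m_i=\levels(P_i)$ and $n=\sum_i m_i-1$; if $n<0$ then every $m_i=0$ and the bound is trivial, so assume $n\ge0$. I would color $B_n$ by partitioning its $n+1=\sum_i m_i$ levels into consecutive blocks of sizes $m_1,\dots,m_t$ and assigning each element the index of the block containing its level, so that color class $i$ is exactly a union of $m_i$ consecutive levels of $B_n$ (empty when $m_i=0$, which is harmless). To conclude that color class $i$ is $P_i$-free I need the comparison fact that \emph{for every $m\ge1$, the union of any $m$ consecutive levels of any $B_N$ admits a weak embedding into the union of the middle $m$ levels of $B_{n'}$ for some $n'$}. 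Granting this, the hypothesis $\levels(P_i)=m_i$ — which asserts that the middle $m_i$ levels of every Boolean lattice are $P_i$-free — forces the union of any $m_i$ consecutive levels of any Boolean lattice to be $P_i$-free (compose the embeddings), so the coloring above has no color-$i$ copy of $P_i$ and certifies $\BR^1(P_1,\dots,P_t)>n$.

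I expect the comparison fact to be the only step that needs genuine care, and I would prove it by padding. The levels $\{a,a+1,\dots,a+m-1\}$ of $B_N$ embed order-preservingly into $B_{N+k_1+k_2}$ as the levels $\{a+k_1,\dots,a+m-1+k_1\}$: first apply $S\mapsto S\cup\{N+1,\dots,N+k_1\}$, which raises every level by $k_1$, then include $B_{N+k_1}\hookrightarrow B_{N+k_1+k_2}$, which leaves levels unchanged. Picking $k_1,k_2\ge0$ with $k_1-k_2=N-m+1-2a$ (one side of this equation is nonnegative, so such a choice exists) centers the band of $m$ levels about $(N+k_1+k_2)/2$, so it becomes the middle $m$ levels of $B_{N+k_1+k_2}$; a common increase of $k_1$ and $k_2$ fixes the parity of $n'=N+k_1+k_2$ against $m$ if the convention for ``the middle $m$ levels'' demands it. (That $\levels(P)\le|P|-1<\infty$, so that all the quantities are well defined, is immediate: a maximal chain of a large enough $B_n$ meets its middle $|P|$ levels in a $|P|$-element chain, into which $P$ embeds.) This completes the proof.
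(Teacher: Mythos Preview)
Your proof is correct and follows essentially the same approach as the paper's: the upper bound by pigeonhole on a maximal chain of $B_N$, and the lower bound by coloring $B_n$ (with $n=\sum_i \levels(P_i)-1$) in consecutive level-blocks of sizes $\levels(P_i)$. You are in fact more careful than the paper on two points: you explicitly verify the $M$ bound (which the paper omits as obvious), and your padding argument justifies why any $\levels(P_i)$ \emph{consecutive} levels of a Boolean lattice are $P_i$-free when the definition of $\levels(P)$ speaks only of the \emph{middle} levels---the paper asserts this step without comment.
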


\begin{proof}
The upper bound follows from the fact that $B_n$ contains a chain of length $n+1$ and that $P_i\subseteq C_{|P_i|}$.
For the lower bound, let $n=\sum_{i=1}^t \levels(P_i)-1$.
For $v\in B_n$ with $|v|\in\left[\sum_{j=1}^{i-1}\levels(P_j),\sum_{j=1}^i \levels(P_j)-1\right]$, let $c(v)=i$.
Thus $c^{-1}(i)$ is the union of $\levels(P_i)$ consecutive levels of $B_n$, so $c$ avoids copies of $P_i$ in color $i$ for all $i$.
\end{proof}

Later in this section, we will demonstrate situations in which the lower bound in Proposition \ref{prop:1ulow} is not tight, but we believe that the lower bound is much closer to the correct answer.
To this end, we believe that the upper bound in Proposition \ref{prop:1ulow} is far from tight in most cases and conjecture that the upper bound is tight \emph{only} when each $P_i$ is a chain (and hence $\levels(P_i) = |P_i|-1$).

The remainder of this section sets out to determine the $1$-uniform Boolean Ramsey numbers of various posets.

\begin{theorem}\label{thm:boolchain}
For positive integers $n_1,\dots,n_t$, $\BR^1(B_{n_1},C_{n_2},\dots,C_{n_t})=n_1+\sum_{i=2}^t(n_i-1)$.
\end{theorem}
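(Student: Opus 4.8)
The plan is to prove the two bounds separately, with essentially all the work in the upper bound. The lower bound is immediate from Proposition~\ref{prop:1ulow} once one observes that $\levels(C_m)=m-1$ and $\levels(B_m)=m$: the union of $m$ consecutive levels of a Boolean lattice has height exactly $m$, which is too short to contain the maximal chain of $B_m$, while the middle $m+1$ levels of $B_n$ contain an interval copy of $B_m$ once $n$ is large enough. Equivalently, writing $N=n_1+\sum_{i=2}^t(n_i-1)$, the coloring of $B_{N-1}$ that uses color $1$ on the bottom $n_1$ levels and color $i$ on the next $n_i-1$ levels (for $i\ge 2$) witnesses $\BR^1(B_{n_1},C_{n_2},\dots,C_{n_t})\ge N$.

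For the upper bound, fix a $t$-coloring $c$ of $B_N$ with no copy of $C_{n_i}$ in color $i$ for any $i\ge 2$ (otherwise we are done), and let $D$ be the set of elements not colored $1$. If $z_1<\dots<z_L$ is a chain in $D$, then for each $i\ge 2$ the $z_j$ with $c(z_j)=i$ form a chain in color $i$ and so number at most $n_i-1$; summing over $i$ gives $h(D)\le\sum_{i=2}^t(n_i-1)=N-n_1$. Thus it suffices to prove the purely structural statement \emph{``if $D\subseteq B_N$ has $h(D)\le s$, then $B_N\setminus D$ contains a copy of $B_{N-s}$''} and apply it with $s=N-n_1$.

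I would prove this structural claim by induction on $s$, the case $s=0$ (where $D=\emptyset$) being trivial. For $s\ge 1$, let $A$ be the antichain of maximal elements of $D$ and $D'=D\setminus A$; since a longest chain in $D'$ can be extended strictly upward by a maximal element of $D$, we get $h(D')\le s-1$. The key step is a ``rerouting'' producing a \emph{strong} embedding $f\colon B_{N-1}\to B_N$ whose image is disjoint from $A$. Identify each element of $B_N$ with a pair $(S,\epsilon)$ where $S\subseteq[N-1]$ and $\epsilon\in\{0,1\}$, corresponding to $S$ if $\epsilon=0$ and to $S\cup\{N\}$ if $\epsilon=1$; then $(S,0)<(S,1)$ for every $S$, so $A$ contains at most one of $(S,0),(S,1)$. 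Let $F\subseteq B_{N-1}$ be the up-set generated by $\{S:(S,0)\in A\}$, and set $f(S)=(S,0)$ for $S\notin F$ and $f(S)=(S,1)$ for $S\in F$. Monotonicity of membership in the up-set $F$ makes $f$ both order-preserving and order-reflecting, so $f(B_{N-1})$ is order-isomorphic to $B_{N-1}$; and the definition of $F$ together with $A$ being an antichain forces $f(S)\notin A$ for every $S$. Now $D'\cap f(B_{N-1})$ has height at most $h(D')\le s-1$ inside the copy $f(B_{N-1})\cong B_{N-1}$, so the induction hypothesis supplies a copy of $B_{(N-1)-(s-1)}=B_{N-s}$ inside $f(B_{N-1})\setminus D'\subseteq B_N\setminus(A\cup D')=B_N\setminus D$, which closes the induction.

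The main obstacle is precisely the requirement that the rerouting return a copy of $B_{N-1}$ that is genuinely order-isomorphic to a Boolean lattice, not merely a weak copy, so that the height bound on the leftover forbidden set $D'$ is inherited and the step can be iterated $s$ times; this is exactly why $f$ is built from the monotone $0/1$-valued function given by membership in $F$ rather than from an unconstrained choice of the last coordinate avoiding $A$. The remaining verifications — that $A$ is an antichain, that $h(D')\le s-1$, and that $f$ misses $A$ — are routine.
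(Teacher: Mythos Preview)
Your proof is correct and takes essentially the same approach as the paper: both find a strongly embedded copy of $B_{N-1}$ inside $B_N$ by ``shifting up'' along the coordinate $N$ on an appropriate up-set, and then iterate. The only difference is organizational---you isolate the clean structural lemma that $B_N$ minus a set of height at most $s$ contains a copy of $B_{N-s}$ and reduce all $t$ colors to it in one stroke, whereas the paper inducts on $m$ in the two-color case $\BR^1(B_n,C_m)$ first and then passes to $t$ colors by pigeonhole; the rerouting map (send $S$ to $S$ or to $S\cup\{N\}$ according to membership in an up-set) is identical in both.
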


\begin{proof}
The lower bound follows from Proposition \ref{prop:1ulow}, so we need only show the upper bound.

We first prove that $\BR^1(B_n,C_m)\leq n+m-1$ by induction on $m$.
For $m=1$, the result is immediate as any use of color $2$ creates a chain of order 1.
Suppose that $m\geq 2$ and let $N=n+m-1$.
Let $c$ be any $2$-coloring of $B_N$ and suppose that $c$ avoids copies of $C_m$ in color $2$; we will show that $c$ must admit a copy of $B_n$ in color $1$.
Let $L$ be the family of subsets of $[N-1]$.
As $L$ is a copy of $B_{N-1}$, the induction hypothesis states that $c$ restricted to $L$ must admit either a copy of $B_{n}$ in color $1$ or a copy of $C_{m-1}$ in color $2$.
If the former holds, then we are done.
Otherwise, $c$ restricted to $L$ admits a copy of $C_{m-1}$ in color $2$.
Suppose that $X_1,\dots,X_s$ are the copies of $C_{m-1}$ in color $2$ contained in $L$.
Because $c$ avoids copies of $C_m$ in color $2$, we see that the elements in $\bigcup_{i=1}^s\left(\up( \max X_i)\setminus\max X_i\right)$ all have color 1.
Let $U=\bigcup_{i=1}^s\up(\max X_i)\cap L$ and let $U'=\{Y\cup\{N\}:Y\in U\}$.
Notice that $U'\subseteq\bigcup_{i=1}^s\left(\up( \max X_i)\setminus\max X_i\right)$, so $U'$ contains only elements of color $1$.
Furthermore, it is easily seen that $B_{N-1}$ embeds into $(L\setminus U)\cup U'$ as $U\cong U'$ and $U'$ is an up-set.
However, $c$ restricted to $(L\setminus U)\cup U'$ does not contain any copies of $C_{m-1}$ in color $2$, so by the induction hypothesis, it must admit a copy of $B_n$ in color $1$ as needed.
We conclude that $\BR^1(B_{n},C_m)\leq N$.

Now that we have proved that $\BR^1(B_n,C_m)=n+m-1$, the $t$-color version follows quickly.
Let $m = 1 + \sum_{i=2}^t (n_i-1)$ and $N = n_1 + m - 1$..
From the 2-color case, $\BR^1(B_{n_1}, C_m) = N = n_1 + \sum_{i=2}^t(n_i-1)$.
Thus, if $c$ is a $t$-coloring of $B_N$, then either $c$ admits a copy of $B_{n_1}$ in color 1 or there exists a copy of $C_m$ where all elements have color in $\{2,\dots,t\}$.
If there is a copy of $B_{n_1}$ in color 1, then we are done.
If not, since $m = 1 + \sum_{i=2}^t (n_i-1)$, there exists a chain of size $n_i$ within the copy of $C_m$ that has color $i$ for some $i \in \{2,\dots,t\}$ by the pigeonhole principle.
\end{proof}

The proof of the upper bound in Theorem \ref{thm:boolchain} actually shows the following statement for general posets.

\begin{cor}
For posets $P_1,\dots,P_t$ and positive integers $n_1,\dots,n_s$, 
\[
\BR^1(P_1,\dots,P_t,C_{n_1},\dots,C_{n_s})\leq\BR^1(P_1,\dots,P_t)+\sum_{i=1}^s(n_i-1).
\]
\end{cor}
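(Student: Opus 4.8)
The plan is to peel off the chains one at a time and, for each peel, reuse essentially verbatim the surgery from the proof of the upper bound in Theorem~\ref{thm:boolchain}. Concretely, I would first prove the one-chain statement
\[
\BR^1(Q_1,\dots,Q_r,C_m)\leq\BR^1(Q_1,\dots,Q_r)+m-1
\]
for an arbitrary list of posets $Q_1,\dots,Q_r$, and then iterate it $s$ times: at the $j$-th application take the list to be $P_1,\dots,P_t,C_{n_1},\dots,C_{n_{j-1}}$ and introduce $C_{n_j}$, so that the bounds telescope to $\BR^1(P_1,\dots,P_t)+\sum_{i=1}^s(n_i-1)$.

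For the one-chain statement I would induct on $m$. The case $m=1$ is immediate: with $N=\BR^1(Q_1,\dots,Q_r)$, any $(r+1)$-coloring of $B_N$ that ever uses color $r+1$ contains a monochromatic $C_1$ in that color, and if it never uses color $r+1$ it is an $r$-coloring of $B_N$, so the definition of $\BR^1(Q_1,\dots,Q_r)$ supplies a copy of $Q_i$ in color $i$ for some $i$. For $m\geq 2$, set $N=\BR^1(Q_1,\dots,Q_r)+m-1$, let $c$ be an $(r+1)$-coloring of $B_N$ avoiding $C_m$ in color $r+1$, and run the argument of Theorem~\ref{thm:boolchain} with $B_n$ replaced throughout by the list $Q_1,\dots,Q_r$: restrict to the copy $L$ of $B_{N-1}$ on subsets of $[N-1]$; by induction either $c|_L$ already yields some $Q_i$ in color $i$ (done) or $c|_L$ yields copies $X_1,\dots,X_\ell$ of $C_{m-1}$ in color $r+1$; since $c$ avoids $C_m$ in color $r+1$, every element of $\bigcup_j\bigl(\up(\max X_j)\setminus\{\max X_j\}\bigr)$ has color different from $r+1$; put $U=\bigcup_j\up(\max X_j)\cap L$ and $U'=\{Y\cup\{N\}:Y\in U\}$; then $U'$ avoids color $r+1$, $B_{N-1}$ embeds into $(L\setminus U)\cup U'$ since $U$ is an up-set of $L$ and $Y\mapsto Y\cup\{N\}$ is an order-isomorphism $U\to U'$, and $c$ restricted to $(L\setminus U)\cup U'$ has no copy of $C_{m-1}$ in color $r+1$ (such a copy would avoid $U'$, hence lie in $L\setminus U$, but each $X_j$ meets $U$). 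Applying the induction hypothesis to the copy of $B_{N-1}$ inside $(L\setminus U)\cup U'$ then produces a copy of $Q_i$ in color $i$ for some $i$, completing the induction.

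I do not expect a genuine obstacle: the content of the corollary is exactly the remark that the proof of Theorem~\ref{thm:boolchain} never uses any property of $B_n$ other than its role as the pattern sought in one color, so $B_n$ can be traded for any finite list of posets in any of the non-chain colors. The only points that warrant a line of checking are the two properties of the translated set $U'$ that were already verified in that proof --- that $U'$ is again an up-set which, placed above $L\setminus U$, reconstitutes a copy of $B_{N-1}$, and that $U'$ is forced into the colors other than $r+1$ --- together with the harmless bookkeeping remark that in the iteration the generic list $Q_1,\dots,Q_r$ is itself allowed to contain chains, which is fine since chains are posets and the one-chain statement was established for an arbitrary list of posets.
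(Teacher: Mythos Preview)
Your proposal is correct and follows essentially the same approach as the paper: the paper's ``proof'' of the corollary consists solely of the remark that the argument in Theorem~\ref{thm:boolchain} never uses any special property of $B_n$ as the target poset, and your write-up is precisely a careful execution of that observation. The only cosmetic difference is that for several chains the paper (in Theorem~\ref{thm:boolchain}) merges the chain colors into one and applies pigeonhole on the resulting long chain, whereas you peel off the chains one at a time by iteration; both reductions are immediate once the one-chain statement is in hand.
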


Axenovich and Walzer~\cite{axenovich2015boolean} consider the induced version of Boolean Ramsey number.
Their upper bounds imply upper bounds for our version, so we summarize some of their results here.

\begin{theorem}[Axenovich and Walzer~{\cite[Theorem 1]{axenovich2015boolean}}]
Let $r \geq s \geq 1$.
\begin{itemize}
\item $\BR^1(B_r,B_s) \leq rs+r+s$.
\item $\BR^1(B_r,B_2) \leq 2r+2$.
\item $\BR^1(B_3,B_3) \leq 8$.
\end{itemize}
\end{theorem}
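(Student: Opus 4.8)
The plan is to derive all three bounds from the corresponding bounds for the \emph{induced} Boolean Ramsey number in \cite[Theorem 1]{axenovich2015boolean} by a one-line reduction. For posets $P_1,\dots,P_t$ write $\BR^1_{\mathrm{ind}}(P_1,\dots,P_t)$ for the least $N$ such that every $t$-coloring of the elements of $B_N$ contains, for some $i$, an \emph{induced} copy of $P_i$ (i.e.\ a strong embedding of $P_i$) with all elements colored $i$. The only observation needed on our side is that a strong embedding is in particular a weak embedding, so an induced copy of $B_n$ inside $B_N$ is also a copy of $B_n$ inside $B_N$; here we use that $B_n$ is the comparability-graph pograph of its poset, so that a weak embedding of the poset already carries $E(B_n)$ into $E(B_N)$. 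Hence any $t$-coloring of $B_N$ that exhibits an induced monochromatic copy of $B_{n_i}$ in color $i$ also exhibits a (not necessarily induced) monochromatic copy of $B_{n_i}$ in color $i$. Taking the contrapositive, any coloring certifying $\BR^1(B_{n_1},\dots,B_{n_t})>N$ also certifies $\BR^1_{\mathrm{ind}}(B_{n_1},\dots,B_{n_t})>N$, and therefore
\[
\BR^1(B_{n_1},\dots,B_{n_t})\ \le\ \BR^1_{\mathrm{ind}}(B_{n_1},\dots,B_{n_t}).
\]
Each of the three statements now follows by quoting \cite[Theorem 1]{axenovich2015boolean} with $(n_1,n_2)=(r,s)$, $(r,2)$, and $(3,3)$ respectively.

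Since all of the genuine combinatorial content sits inside \cite{axenovich2015boolean}, a self-contained treatment would instead reprove those induced bounds directly. For that I would set $N=(r+1)(s+1)-1$ and argue by induction (on $r$, say) using a block-partition of the ground set: peel off an $(s+1)$-element block, apply the inductive bound to the Boolean lattices obtained by restricting and translating along a maximal chain through that block, and show that a monochromatic induced sub-lattice produced by the induction can be completed either to a color-$1$ induced $B_r$ or to a color-$2$ induced $B_s$. The sharper estimates $\BR^1_{\mathrm{ind}}(B_r,B_2)\le 2r+2$ and $\BR^1_{\mathrm{ind}}(B_3,B_3)\le 8$ would then be handled separately, exploiting the short list of induced copies of $B_2$ (respectively, of $B_3$) available inside a Boolean lattice through a finer case analysis.

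The one point requiring care — and the only place the argument could go wrong — is the direction of the induced/non-induced comparison, which runs opposite to the naive guess: forbidding \emph{all} copies of $B_n$ is strictly more restrictive than forbidding only \emph{induced} copies, so a coloring that is good for the non-induced problem is automatically good for the induced problem, which is exactly what yields $\BR^1\le\BR^1_{\mathrm{ind}}$. Once that is fixed, no new combinatorics is needed beyond what \cite{axenovich2015boolean} provides, so there is no further obstacle.
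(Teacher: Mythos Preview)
Your proposal is correct and matches the paper's own treatment: the paper does not prove this theorem at all but simply quotes it from \cite{axenovich2015boolean}, noting in the surrounding text that ``their upper bounds imply upper bounds for our version'' --- exactly the reduction $\BR^1 \le \BR^1_{\mathrm{ind}}$ that you spell out explicitly and correctly. Your additional sketch of how one might reprove the Axenovich--Walzer bounds directly goes beyond what the paper does, but is not needed here.
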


Note that the lower bounds established by Axenovich and Walzer do not carry over to our case, although we will later prove that $\BR^1(B_3,B_3)\geq 7$ as is the case in the induced version.

A common tool in studying Tur\'an-type questions in posets is known as the \emph{Lubell function}. For a family $\F \subseteq B_n$, the \emph{Lubell function of $\F$} is defined as
\[
\lu_n(\F)=\sum_{F\in\F}{n\choose |F|}^{-1}.
\]
The Lubell function of $\F$ can be interpreted as the average size of $|\F\cap\C|$ where $\C$ is a full chain in $B_n$. 
An alternate interpretation is that $\lu_n(\F)$ is the expected number of elements of $\F$ that are visited by a random walk from the empty set to the full set along the Hasse diagram of $B_n$.
Using either interpretation, it is straightforward to observe that $|\F|\leq\lu_n(\F){n\choose\lfloor n/2\rfloor}$. 
It is due to this observation that Lubell functions of $P$-free families have received a great deal of attention, as bounds on the Lubell function help answer Tur\'an-type questions in the Boolean lattice. 
We apply Lubell functions to attain bounds on the $1$-uniform Boolean Ramsey number by calling upon linearity, i.e.\ if $\F\cap\G=\varnothing$, then $\lu_n(\F\cup\G)=\lu_n(\F)+\lu_n(\G)$.

For a poset $P$, let $L_n(P)$ be the maximum value $\lu_n(\F)$ among families $\F \subseteq B_n$ such that $\F$ is $P$-free.
The following result has been used implicitly by many authors such as Axenovich and Walzer~{\cite[Theorem 6]{axenovich2015boolean}} and Johnston, Lu and Milans~{\cite[Theorem 3]{JLM13}}. Although the result is straightforward, we provide the explicit statement in the case of 1-uniform Boolean Ramsey numbers along with a proof for completeness.

\begin{theorem}\label{lem:lubellbound}
If $P_1,\dots,P_t$ are posets and $\sum_{i=1}^t L_n(P_i) < n+1$ for some integer $n$, then $\BR^1(P_1,\dots,P_t) \leq n$.
%If $P$ is a poset that is not an antichain, then there exists a constant $c$ such that $\BR^1_t(P) \leq ct$.
\end{theorem}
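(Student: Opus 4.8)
The plan is to argue by contradiction using the linearity of the Lubell function over a disjoint partition of $B_n$ coming from a hypothetical bad coloring. Suppose $\sum_{i=1}^t L_n(P_i) < n+1$ but $\BR^1(P_1,\dots,P_t) > n$; then there is a $t$-coloring $c$ of the elements of $B_n$ such that, for each $i$, the color class $\F_i = c^{-1}(i) \subseteq B_n$ contains no copy of $P_i$. By definition of $L_n(P_i)$, each such $P_i$-free family satisfies $\lu_n(\F_i) \leq L_n(P_i)$.

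Next I would use the fact that $\F_1,\dots,\F_t$ partition the ground set of $B_n$, together with the linearity property $\lu_n(\F \cup \G) = \lu_n(\F) + \lu_n(\G)$ for disjoint families, to conclude that
\[
\lu_n(B_n) = \sum_{i=1}^t \lu_n(\F_i) \leq \sum_{i=1}^t L_n(P_i) < n+1.
\]
Here $\lu_n(B_n)$ denotes the Lubell function of the full family of all subsets of $[n]$. To finish I would compute this value directly: using the chain interpretation, a full chain in $B_n$ has exactly $n+1$ elements, all of which lie in $B_n$, so the average size of $|\C \cap B_n|$ over all full chains $\C$ is exactly $n+1$; alternatively, $\lu_n(B_n) = \sum_{k=0}^{n} \binom{n}{k}\binom{n}{k}^{-1} = n+1$. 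This contradicts the strict inequality above, completing the proof.

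There is essentially no hard part here — the result is genuinely straightforward, as the paper itself notes. The only point requiring a small amount of care is making sure the ``linearity'' step is applied correctly: one needs that the color classes genuinely partition \emph{all} of $B_n$ (every element receives exactly one color), so that the Lubell functions add up to exactly $\lu_n(B_n)$ rather than merely bounding a sub-sum. Everything else is a one-line invocation of the definition of $L_n(P_i)$ and the elementary identity $\lu_n(B_n) = n+1$.
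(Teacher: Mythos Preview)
Your proof is correct and follows essentially the same approach as the paper: assume a bad coloring, bound each color class's Lubell function by $L_n(P_i)$, and use linearity over the partition to reach the contradiction $n+1 = \lu_n(B_n) \leq \sum_i L_n(P_i) < n+1$. The paper's argument is identical in structure and content.
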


\begin{proof}
Let $c$ be a $t$-coloring of $B_n$ and suppose that it avoids copies of $P_i$ in color $i$ for all $i$ and define $\F_i=c^{-1}(i)$. As $\F_i$ is $P_i$-free, we know that $\lu_n(\F_i)\leq L_n(P_i)$ for all $i$. Therefore, as the $\F_i$'s are disjoint,
\[
\sum_{i=1}^t L_n(P_i)\geq\sum_{i=1}^t \lu_n(\F_i)=\lu_n\left(\bigcup_{i=1}^t\F_i\right)=\lu_n(B_n)=n+1.
\]
As such, if $\sum_{i=1}^t L_n(P_i)<n+1$ for some $n$, then $\BR^1(P_1,\dots,P_t)\leq n$.
\end{proof}

%
%
%Johnston, Lu and Milans~\cite{JLM13} use Lubell functions to explore the multicolor version of the induced Boolean Ramsey number. 
%%For two posets $P$ and $Q$, we say that $Q$ contains an \emph{induced} copy of $P$ if there is an injection $\phi:P\to Q$ where $x\leq_P y$ if and only if $\phi(x)\leq_Q\phi(y)$. 
%Johnston, Lu and Milans show that if $N\geq (2t)^{2^{n-1}}+1$, then any $t$-coloring of $B_N$ contains a monochromatic \emph{induced} copy of $B_n$. 
%This implies that
%\[
%\BR_t^1(B_n)\leq (2t)^{2^{n-1}}+1.
%\]
%The trivial upper bound in Proposition \ref{prop:1ulow} shows that $\BR_t^1(B_n)\leq t(2^n-1)$, so their bound attained through looking for \emph{induced} copies of $B_n$ does not provide a good estimate of $\BR_t^1(B_n)$. 
%Despite this, we will make use of the technique employed by Johnston, Lu and Milans to prove their doubly exponential bound.
%Theorem \ref{lem:lubellbound} should be seen as a formalization of this stepping stone. 

%\begin{lemma}
%If $P_1,\dots,P_t$ are posets and $\sum_{i=1}^t L_n(P_i) < n+1$ for some integer $n$, then $\BR^1(P_1,\dots,P_t) \leq n$.
%\end{lemma}
%
%\begin{proof}
%Let $n$ be such that $\sum_{i=1}^tL_n(P_i)<n+1$.
%Let $c$ be any $t$-coloring of $B_n$ and for $i\in[t]$, let $\F_i=c^{-1}(i)$.
%By linearity,
%\[
%n+1=\lu_n(B_n)=\lu_n\left(\bigcup_{i=1}^t\F_i\right)=\sum_{i=1}^t\lu_n(\F_i).
%\]
%As $\sum_{i=1}^t L_n(P_i)<n+1$, there is some $i\in[t]$ for which $\lu_n(\F_i)>L_n(P_i)$.
%Hence, $\F_i$ is not $P_i$-free, so $c$ admits a copy of $P_i$ in color $i$, and $\BR^1(P_1,\dots,P_t)\leq n$.
%\end{proof}
%

Griggs and Li~\cite{GL15} define a poset $P$ to be \emph{uniformly Lubell-bounded}, or \emph{uniformly L-bounded}, if $L_n(P)\leq \levels(P)$ for all $n$. 
By a direct application of Theorem \ref{lem:lubellbound}, we find that the lower bound given in Proposition \ref{prop:1ulow} is tight when regarding uniformly L-bounded posets.

\begin{prop}\label{prop:ulbound}
If $P_1,\dots,P_t$ are uniformly L-bounded posets, then
\[
\BR^1(P_1,\dots,P_t)=\sum_{i=1}^t \levels(P_i).
\]
\end{prop}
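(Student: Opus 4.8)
The plan is to combine the two bounds already available in the excerpt. The lower bound $\BR^1(P_1,\dots,P_t)\geq\sum_{i=1}^t\levels(P_i)$ is immediate from Proposition~\ref{prop:1ulow} (the term $M$ in the maximum can only help). So the entire content of the proposition is the matching upper bound $\BR^1(P_1,\dots,P_t)\leq\sum_{i=1}^t\levels(P_i)$, and this should fall out of Theorem~\ref{lem:lubellbound} almost mechanically.

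Concretely, set $n=\sum_{i=1}^t\levels(P_i)$; we want to show $\BR^1(P_1,\dots,P_t)\leq n$, for which by Theorem~\ref{lem:lubellbound} it suffices to verify $\sum_{i=1}^t L_n(P_i)<n+1$. Since each $P_i$ is uniformly L-bounded, $L_n(P_i)\leq\levels(P_i)$ for every $n$, and in particular for this $n$. Hence $\sum_{i=1}^t L_n(P_i)\leq\sum_{i=1}^t\levels(P_i)=n<n+1$, which is exactly the hypothesis of Theorem~\ref{lem:lubellbound}. Therefore $\BR^1(P_1,\dots,P_t)\leq n=\sum_{i=1}^t\levels(P_i)$, and together with the lower bound this gives equality.

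There is essentially no obstacle here: the proposition is a direct corollary of Theorem~\ref{lem:lubellbound} once one recalls the definition of uniform L-boundedness, and the paper flags it as such ("By a direct application of Theorem~\ref{lem:lubellbound}"). The only point requiring the slightest care is that the strict inequality $n<n+1$ is what Theorem~\ref{lem:lubellbound} demands, so the bound $L_n(P_i)\leq\levels(P_i)$ (rather than a strict version) is exactly enough — there is no loss. One should also note in passing that $\levels(P_i)\geq 1$ for any nonempty poset, so $n\geq t\geq 1$ and the statement is not vacuous; and that $M\leq\sum_i\levels(P_i)$ is automatic here since $P_i\subseteq B_{\levels(P_i)+\text{something}}$ is not needed — the lower bound $\sum_i\levels(P_i)$ alone suffices to conclude equality. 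I would write the proof in three or four lines: state $n$, invoke uniform L-boundedness to bound the Lubell sum, apply Theorem~\ref{lem:lubellbound} for the upper bound, and cite Proposition~\ref{prop:1ulow} for the lower bound.
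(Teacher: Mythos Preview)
Your proposal is correct and matches the paper's approach exactly: the paper presents Proposition~\ref{prop:ulbound} without a separate proof, noting only that it follows ``by a direct application of Theorem~\ref{lem:lubellbound},'' and your argument spells out precisely that application together with the lower bound from Proposition~\ref{prop:1ulow}.
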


\begin{cor}
For a positive integer $r$, let $m=\lceil\lg(r+2)\rceil$. 
\begin{enumerate}
\item If $r\in\left[2^{m-1}-1,2^m-{m\choose\lfloor m/2\rfloor}-1\right]$, then $\BR^1_t(\diamond_r)=tm$.
\item If $r\in\left[2^m-{m\choose\lfloor m/2\rfloor},2^m-2\right]$, then $tm\leq\BR^1_t(\diamond_r)\leq t(m+1)-\left\lceil t(2^m-r-1)/{m\choose\lfloor m/2\rfloor}\right\rceil$.
\end{enumerate}
\end{cor}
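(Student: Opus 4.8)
The plan is to obtain the lower bound from Proposition~\ref{prop:1ulow} and the upper bound from the Lubell method (Theorem~\ref{lem:lubellbound}), the one substantial ingredient being a sharp estimate for $L_n(\diamond_r)$ valid for every $n$. First I would pin down $\levels(\diamond_r)$. A copy of $\diamond_r$ fits inside a union of $d+1$ consecutive levels precisely when some interval $[A,B]\cong B_d$ whose endpoints occupy the extreme two of those levels contains at least $r$ proper intermediate elements (recall that a copy only requires a weak embedding, so the $r$ middle vertices need only be strictly between $A$ and $B$), i.e.\ precisely when $2^{d}-2\ge r$. Consequently the union of any $m$ consecutive levels of any $B_n$ is $\diamond_r$-free as soon as $2^{m-1}-2<r$, that is $r\ge 2^{m-1}-1$, while for $n$ large $m+1$ consecutive levels fail to be $\diamond_r$-free once $r\le 2^m-2$; both inequalities hold throughout the stated ranges, so $\levels(\diamond_r)=m$. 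Plugging this into Proposition~\ref{prop:1ulow} yields $\BR^1_t(\diamond_r)\ge tm$, which is the lower bound in both parts.

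The heart of the argument is the claim that, for all $n$,
\[
L_n(\diamond_r)\ \le\ \max\left\{\,m,\ \ m+1-\frac{2^m-1-r}{{m\choose\lfloor m/2\rfloor}}\,\right\}.
\]
Since the middle $m$ levels are $\diamond_r$-free and have Lubell value $m$, only the upper bound needs proof. I would first treat $n=m$: given a $\diamond_r$-free $\G\subseteq B_m$, if $\varnothing\notin\G$ or $[m]\notin\G$ then $\lu_m(\G)\le m$ trivially; otherwise $\diamond_r$-freeness applied to the interval $[\varnothing,[m]]$ forces at least $2^m-1-r$ of the $2^m-2$ proper nonempty subsets of $[m]$ to lie outside $\G$, each contributing Lubell weight at least ${m\choose\lfloor m/2\rfloor}^{-1}$, whence $\lu_m(\G)\le (m+1)-(2^m-1-r){m\choose\lfloor m/2\rfloor}^{-1}$. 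For general $n$ I would transfer this $B_m$-local estimate to $B_n$ by a counting argument of cycle/partition type: average the local inequality over all intervals $[A,B]\cong B_m$ inside $B_n$, weighted by the probability that a random maximal chain passes through both $A$ and $B$, and then re-express the total in terms of $\lu_n(\F)$ using Vandermonde's identity. That the bound is correct is confirmed by the matching construction: take $m+1$ consecutive levels of $B_n$ and thin the central one to relative density $1-(2^m-1-r){m\choose\lfloor m/2\rfloor}^{-1}$ along a sub-family in which no width-$m$ interval meets $r$ of the retained central elements.

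Granting this estimate, both parts follow quickly. In the first range $r\le 2^m-{m\choose\lfloor m/2\rfloor}-1$, so $(2^m-1-r)/{m\choose\lfloor m/2\rfloor}\ge 1$ and the estimate gives $L_n(\diamond_r)\le m=\levels(\diamond_r)$ for every $n$; thus $\diamond_r$ is uniformly L-bounded, and Proposition~\ref{prop:ulbound} gives $\BR^1_t(\diamond_r)=tm$. In the second range $2^m-{m\choose\lfloor m/2\rfloor}\le r\le 2^m-2$, so $0<(2^m-1-r)/{m\choose\lfloor m/2\rfloor}<1$ and $L_n(\diamond_r)\le m+1-(2^m-1-r)/{m\choose\lfloor m/2\rfloor}$. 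By Theorem~\ref{lem:lubellbound}, $\BR^1_t(\diamond_r)\le n$ whenever $t\bigl(m+1-(2^m-1-r)/{m\choose\lfloor m/2\rfloor}\bigr)<n+1$; the least integer $n$ satisfying this is $t(m+1)-\bigl\lceil t(2^m-r-1)/{m\choose\lfloor m/2\rfloor}\bigr\rceil$, using $\lfloor{-x}\rfloor=-\lceil x\rceil$, which is precisely the claimed upper bound.

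The delicate point is the general-$n$ case of the displayed estimate: one must push the clean $B_m$-local bound to $B_n$ without losing a factor of order $n/m$, the obstruction being that a single maximal chain of $B_n$ can meet a $\diamond_r$-free family far more than $m+1$ times, so the various $B_m$-windows have to be recombined with exactly the right weights. I expect most of the work to lie in verifying that the binomial bookkeeping (or, alternatively, a symmetric-chain decomposition argument) delivers precisely the constant $(2^m-1-r)/{m\choose\lfloor m/2\rfloor}$, together with separately handling small $n$, where the extreme levels of $B_n$ still carry non-negligible Lubell weight.
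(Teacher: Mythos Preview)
Your overall architecture is exactly the paper's: the lower bound comes from Proposition~\ref{prop:1ulow} via $\levels(\diamond_r)=m$, and the upper bound comes from the Lubell machinery (Proposition~\ref{prop:ulbound} in the first range, Theorem~\ref{lem:lubellbound} in the second). The paper, however, does not attempt to prove the key inequality
\[
L_n(\diamond_r)\ \le\ m+1-\frac{2^m-1-r}{{m\choose\lfloor m/2\rfloor}}
\]
for all $n$; it simply invokes Griggs, Li and Lu~\cite[Theorem~2.5]{GLL12}. Everything after that point in your write-up (the uniform L-boundedness conclusion in the first range, and the ceiling arithmetic in the second) is identical to the paper's argument.

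The one place where your proposal diverges---your sketch of a direct proof of the Lubell bound---has a genuine gap. Your $n=m$ case is fine, but the averaging you propose for general $n$ does not deliver a bound independent of $n$. If you weight each interval $[A,B]$ with $|B\setminus A|=m$ by the probability that a uniformly random maximal chain passes through both endpoints, then a short computation shows that the contribution of each $F\in\F$ to the weighted sum $\sum_{A,B}w(A,B)\,\lu_m(\F\cap[A,B])$ is exactly $c(|F|)\binom{n}{|F|}^{-1}$, where $c(k)=\min(k,n-m)-\max(0,k-m)+1\le m+1$. Hence the weighted sum is at most $(m+1)\lu_n(\F)$ from above, but to bound $\lu_n(\F)$ you need the reverse inequality, and all you know is $c(k)\ge 1$. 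The resulting estimate is $\lu_n(\F)\le L^*(n-m+1)$, which blows up with $n$ rather than staying at $L^*$. No choice of Vandermonde-type rearrangement fixes this, because the obstruction is structural: a maximal chain in $B_n$ is covered by $n-m+1$ overlapping $B_m$-windows, and the local bound on each window is being counted roughly $m+1$ times per element while the global target is counted once.

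Griggs--Li--Lu's actual argument is different in kind: rather than averaging over fixed-width intervals, they partition maximal chains according to the minimum and maximum elements of $\F$ along each chain and exploit that the open interval between those two extremes can contain at most $r-1$ elements of $\F$ (else a $\diamond_r$ appears). This data-dependent partition is what produces a bound uniform in $n$. If you want a self-contained proof, that is the route to take; otherwise, citing \cite{GLL12} as the paper does is the clean option.
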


\begin{proof}
It is well-known that $\levels(\diamond_r)=m$, so $\BR^1_t(\diamond_r)\geq tm$ by Proposition~\ref{prop:1ulow}.

%
%Consider sets $B\subseteq A$ coming from the union of the $k$ middle layers of $B_n$. Clearly $|[B,A]|=2^{|A\setminus B|}\leq 2^{k-1}$, so if $r> 2^{k-1}-2$, then $\diamond_r$ is not contained in $[B,A]$ as there are fewer than $r$ sets $S$ for which $B\subset S\subset A$; hence, $\levels(\diamond_r)\geq m$, which establishes both lower bounds through Proposition~\ref{prop:1ulow}. In fact, $\levels(\diamond_r)=m$ as the union of $m+1$ middle layers of $B_n$ has sets $B\subseteq A$ with $|[B,A]|=2^m\geq r+2$, so $\diamond_r\subseteq[B,A]$.
Griggs, Li and Lu~{\cite[Theorem 2.5]{GLL12}} showed that if $r\in\left[2^{m-1}-1,2^m-{m\choose\lfloor m/2\rfloor}-1\right]$, then $L_n(\diamond_r)\leq m$ and if $r\in\left[2^m-{m\choose\lfloor m/2\rfloor},2^m-2\right]$, then $L_n(\diamond_r)\leq m+1-(2^m-r-1)/{m\choose\lfloor m/2\rfloor}$.

As such, for $r\in\left[2^{m-1}-1,2^m-{m\choose\lfloor m/2\rfloor}-1\right]$, $\diamond_r$ is uniformly L-bounded, so $\BR^1_t(\diamond_r)=tm$ by Proposition~\ref{prop:ulbound}. 

For $r\in\left[2^m-{m\choose\lfloor m/2\rfloor},2^m-2\right]$, let $M=(2^m-r-1)/{m\choose\lfloor m/2\rfloor}$. For any $x\in\mathbb{R}$, $\lceil x\rceil<x+1$, so
\[
t\cdot L_n(\diamond_r)\leq t(m+1)-tM<t(m+1)-\lceil tM\rceil+1,
\]
for each $n$. Therefore, by Proposition~\ref{lem:lubellbound}, we have $\BR^1_t(\diamond_r)\leq t(m+1)-\lceil tM\rceil$ .
\end{proof}

Up until this point, we have mostly considered cases in which the lower bound in Proposition \ref{prop:1ulow} is tight. 
This, however, is not the case in general.
To show this, we consider the \emph{butterfly poset}.
The butterfly poset is special as De Bonis, Katona, and Swanepool \cite{DKS05} determined the largest $\bowtie$-free family in $B_n$ to be \emph{exactly} the middle two levels \emph{for all} $n$, while most other results in this direction are necessarily asymptotic. This is especially interesting as $L_n(\bowtie)=3$ for all $n$, which is witnessed by any family consisting of a level of $B_n$ along with $\varnothing$ and $[n]$. As such, Theorem \ref{lem:lubellbound} implies that $\BR_t(\bowtie)\leq 3t$, but this is not tight.
In order to determine $\BR_t^1(\bowtie)$, we require a more careful use of the idea in Theorem \ref{lem:lubellbound}. 
For a poset $P$, define
\[
L_n'(P)=\max\left\{\lu_n(\F):\F\subseteq B_n\setminus\{[n],\varnothing\},\text{$\F$ is $P$-free}\right\},
\]
This new value $L_n'(P)$ is the maximum Lubell value of a $P$-free family that does not contain either the maximal or minimal element.

\begin{prop}
For $t\geq 1$, $\BR_t^1(\bowtie)=2t+1$.
\end{prop}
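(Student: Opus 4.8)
The lower bound $\BR_t^1(\bowtie)\geq 2t+1$ follows from Proposition~\ref{prop:1ulow} once we note that $\levels(\bowtie)=2$ (the middle two levels of any $B_n$ are $\bowtie$-free by De Bonis--Katona--Swanepool, while any three consecutive levels contain a $\bowtie$): that gives $\BR_t^1(\bowtie)\geq\sum_{i=1}^t\levels(\bowtie)+? $ — more precisely the construction colors $n=2t$ levels of $B_{2t}$ with two consecutive levels per color, avoiding $\bowtie$ in every color, so $\BR_t^1(\bowtie)\geq 2t+1$. The real work is the upper bound $\BR_t^1(\bowtie)\leq 2t+1$, and the plan is to refine the Lubell-function argument of Theorem~\ref{lem:lubellbound} using the sharper parameter $L_n'(\bowtie)$. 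The key point to establish first is the claim that $L_n'(\bowtie)\leq 2$ for all $n\geq 1$; that is, a $\bowtie$-free family $\F\subseteq B_n\setminus\{\varnothing,[n]\}$ has $\lu_n(\F)\leq 2$. This should follow from the De Bonis--Katona--Swanepool structural result together with a counting/averaging argument: the Lubell value is the expected number of elements of $\F$ hit by a uniformly random maximal chain, and if $\F$ avoids $\varnothing$ and $[n]$ then one can bound this expectation by $2$ by exploiting that $\bowtie$-freeness forbids two ``incomparable pairs stacked'' — concretely, for each maximal chain the intersection with $\F$ avoids certain configurations, and one sums $\binom{n}{|F|}^{-1}$ levelwise. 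I expect this claim to be the main obstacle, and I would try to derive it by adapting the weight-shifting / normalized-matching arguments standard in this area (as in Griggs--Li), or by directly invoking that $|\F|\leq \binom{n}{\lfloor n/2\rfloor}$ for $\bowtie$-free $\F$ and distributing this bound across levels.

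Granting $L_n'(\bowtie)\leq 2$, here is how the induction on $t$ runs for the upper bound. Let $N=2t+1$ and let $c$ be a $t$-coloring of $B_N$; suppose for contradiction it avoids a monochromatic $\bowtie$ in every color. Consider the $t$-th color class $\F_t=c^{-1}(t)$. If $\varnothing\notin\F_t$ and $[N]\notin\F_t$, then $\lu_N(\F_t)\leq L_N'(\bowtie)\leq 2$, so $\lu_N(\bigcup_{i<t}\F_i)\geq (N+1)-2=2t$; but the family $\bigcup_{i<t}\F_i$ uses only $t-1$ colors, so by Theorem~\ref{lem:lubellbound} applied with $\sum_{i<t}L_N(\bowtie)\leq 3(t-1)$ — wait, that isn't immediately a contradiction, so one must be more careful: instead pass to a sub-Boolean-lattice. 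The cleaner route: restrict to an interval $[A,B]$ in $B_N$ of height $N-1$, namely either $[\varnothing,[N-1]]\cong B_{N-1}$ or $[\{N\},[N]]\cong B_{N-1}$, chosen so that the removed extreme element is not ``needed''; but the extremes $\varnothing,[N]$ lie in some color class, say color $j$, and we remove them by the following trick. Either $c^{-1}(t)$ avoids both extremes of $B_N$ — in which case the Lubell bound $2$ for color $t$ plus induction on the remaining $t-1$ colors over $B_{N}$ closes it (since $2+\BR$-style bound $\ge$ contradicts $N+1=2t+1$ once we know the $(t-1)$-color value is $2(t-1)+1$, giving $\lu\le 2+2(t-1)=2t<2t+1$) — or one of the extremes has color $t$, and then we instead delete coordinate $N$ and observe that in $B_{N-1}$ the color-$t$ class still is $\bowtie$-free and still a valid coloring, contradicting the inductive value $\BR_{t}^1(\bowtie)$...

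Let me state the induction more robustly. I would prove the slightly stronger statement: \emph{for every $t\geq 1$ and every $N\geq 2t+1$, every $t$-coloring of $B_N$ has a monochromatic $\bowtie$, and moreover in $B_{2t}$ any $\bowtie$-free $t$-coloring must, in each color, look like two consecutive levels.} The base case $t=1$ is exactly De Bonis--Katona--Swanepool. For the inductive step at $N=2t+1$: if some color, WLOG color $t$, contains neither $\varnothing$ nor $[N]$, then $\lu_N(c^{-1}(t))\le 2$, so the other $t-1$ colors have total Lubell value $\ge N+1-2=2t=2(t-1)+2>2(t-1)+1$; since the union of these $t-1$ color classes is $\bowtie$-free in each color, linearity of $\lu_N$ plus $L_N(\bowtie)=3$ is not yet enough, so instead I use that a $\bowtie$-free family in $B_N$ with Lubell value exceeding $2$ must contain an extreme element — precisely, $L_N'(\bowtie)\le 2$ means any color class with $\lu_N>2$ contains $\varnothing$ or $[N]$. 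Since only $\varnothing$ and $[N]$ are available and there are $t-1\ge$ colors with average Lubell value $>2$ when $t\ge 3$, pigeonhole forces two colors to share an extreme, impossible. The remaining small cases $t=1,2$ are handled directly (and $\BR_2^1(\bowtie)=5$ should be checkable by this same argument since at most two color classes can grab the two extremes). I would present this contradiction argument carefully as the core of the write-up, flagging the verification of $L_n'(\bowtie)\le 2$ as the one nontrivial input.
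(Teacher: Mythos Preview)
Your proposal has genuine gaps in both directions.

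\textbf{Lower bound.} The construction you describe is incomplete: $B_{2t}$ has $2t+1$ levels (levels $0$ through $2t$), not $2t$, so assigning two consecutive levels to each of $t$ colors leaves one level uncolored. Proposition~\ref{prop:1ulow} only yields $\BR_t^1(\bowtie)\geq 2t$. The paper's construction is not the naive one: it assigns levels $\{2i,2i+1\}$ to color $i$ for $i\in[t-1]$ and the three levels $\{0,1,2t\}$ to color $t$. This last class is $\bowtie$-free because levels $0$ and $2t$ are singletons, so no two incomparable elements exist on the top or the bottom. Your claimed strengthening that ``in $B_{2t}$ any $\bowtie$-free $t$-coloring must, in each color, look like two consecutive levels'' is therefore false.

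\textbf{Upper bound.} Your pigeonhole argument does not close. Using only $L_N'(\bowtie)\leq 2$ and $L_N(\bowtie)\leq 3$, the best you can say is: at most two color classes contain an extreme (contributing $\leq 3$ each), and the remaining $t-2$ classes contribute $\leq 2$ each, for a total of $\leq 2\cdot 3+(t-2)\cdot 2=2t+2=N+1$. This matches $\lu_N(B_N)$ exactly and gives no contradiction. ``Average $>2$'' does not force more than two classes to exceed $2$, so no two colors are forced to share an extreme. The paper supplies the missing strict inequality via a different observation: if color $j$ contains $\varnothing$, then any $\vee_2$ of color $j$ in the interior would combine with $\varnothing$ to form a $\bowtie$; hence $\F_j\setminus\{\varnothing,[N]\}$ is $\vee_2$-free, and one shows $L_N'(\vee_2)<2$ strictly (a short chain-swapping argument). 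That strict inequality, not $L_N'(\bowtie)\leq 2$ alone, is what drops the total below $2t$ and yields the contradiction.
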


\begin{proof}
\textit{Lower bound.} Let $c$ be a $t$-coloring of $B_{2t}$ defined as follows.
For $i\in[t-1]$, if $|x|\in\{2i,2i+1\}$, let $c(x)=i$, and if $|x|\in\{0,1,2t\}$, let $c(x)=t$.
As $\levels(\bowtie)=2$, we see that $c$ avoids copies of $\bowtie$ in colors $1,\dots,t-1$.
Further, it is easy to check that $\bowtie$ does not appear in color $t$, so $\BR_t^1(\bowtie)>2t$.

\textit{Upper bound.} As shown by Griggs and Li~{\cite[Theorem 5.1]{GL13}}, $L_n'(\bowtie)=2$. 
We begin by showing that for any $n$, $L_n'(\vee_2)<2$. 
Suppose that $L_n'(\vee_2)\geq 2$ and let $\F\subseteq B_n\setminus\{[n],\varnothing\}$ be a $\vee_2$-free family with $\lu_n(\F)\geq 2$. 
As $\vee_2$ is contained in $C_3$, we observe that no chain can intersect $3$ elements of $\F$, so $\lu_n(\F)=2$ and every full chain in $B_n$ must intersect \emph{exactly} $2$ elements of $\F$. 
Let $\mathcal{C}$ be any full chain in $B_n$ and suppose that $\mathcal{C}\cap\F=\{F_1,F_2\}$ with $F_1\subset F_2$. 
Now choose $\mathcal{C}'$ to be a full chain that agrees with $\mathcal{C}$ through $F_1$ and avoids $F_2$ (note that $\mathcal{C}'$ can be found as $F_2\neq[n]$).
Therefore, $\mathcal{C}'\cap\F=\{F_1,F_3\}$ for some $F_3\neq F_2$. 
As $\mathcal{C}$ and $\mathcal{C}'$ agree through $F_1$, it must be that $F_1\subset F_3$, so $F_1F_2F_3$ forms a copy of $\vee_2$; a contradiction. 
Thus, $L_n'(\vee_2)<2$ for every $n$.

Now let $c$ be any $t$-coloring of $B_{2t+1}$, and, without loss of generality, suppose that $c(\varnothing)=1$.
Notice that if $c$ restricted to $B_{2t+1}\setminus\{\varnothing\}$ admits a copy of $\vee_2$ in color $1$, then $c$ admits a copy of $\bowtie$ in color $1$.
For $i\in[t]$, define $\F_i=c^{-1}(i)\setminus\{[2t+1],\varnothing\}$ and suppose that it were the case that $\F_1$ is $\vee_2$-free and $\F_i$ is $\bowtie$-free for all $i\in\{2,\dots,t\}$. Thus, $\lu_{2t+1}(\F_1)\leq L_{2t+1}'(\vee_2)<2$ and $\lu_{2t+1}(\F_i)\leq L_{2t+1}'(\bowtie)\leq 2$ for all $i\in\{2,\dots,t\}$. Therefore, by linearity,
\[
2t>L_{2t+1}'(\vee_2)+\sum_{i=2}^t L_{2t+1}'(\bowtie)\geq\sum_{i=1}^t\lu_{2t+1}(\F_i)=\lu_{2t+1}\left(\bigcup_{i=1}^t\F_i\right)=\lu_{2t+1}(B_{2t+1}\setminus\{[2t+1],\varnothing\})=2t;
\]
a contradiction. Thus, $\BR^1_t(\bowtie)=2t+1$.
\end{proof}

Notice that the proof of the lower bound extends to show that for any posets $P_2,\dots,P_t$,
\[
\BR^1(\bowtie,P_2,\dots,P_t)\geq 3+\sum_{i=2}^t\levels(P_i),
\]
even though $\levels(\bowtie)=2$. We now present another case in which the lower bound in Proposition \ref{prop:1ulow} is not tight. 
In order to do so, we require the use of the Lov\'asz Local Lemma, which we briefly state.

\begin{theorem}[Lov\'asz Local Lemma]\label{thm:local}
Let $A_1,\dots,A_k$ be a collection of events such that for all $i$, $\Pr[A_i]\leq p$ and $A_i$ is independent of all but at most $D$ other events. If $ep(D+1)\leq 1$, where $e$ is the base of the natural logarithm, then there is a nonzero probability that none of the events occur. 
\end{theorem}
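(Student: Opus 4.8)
Since this is the classical symmetric Lov\'asz Local Lemma, which the paper invokes purely as a tool, the plan is to follow the standard conditional-probability induction of Erd\H{o}s and Lov\'asz. First I would fix a \emph{dependency graph} on $\{1,\dots,k\}$ in which $i$ is joined to the indices $j$ for which $A_i$ fails to be (mutually) independent of the events indexed outside the neighbourhood $N(i)$; by hypothesis $|N(i)|\le D$ for all $i$. Everything then reduces to proving, by induction on $|S|$, the following coupled claim: for every index $i$ and every $S\subseteq\{1,\dots,k\}\setminus\{i\}$, both $\Pr\!\big[\bigcap_{j\in S}\overline{A_j}\big]>0$ and
\[
\Pr\!\left[A_i \,\Big|\, \bigcap_{j\in S}\overline{A_j}\right] \le \frac{1}{D+1}.
\]
The positivity half is what makes the conditional probabilities below well defined, and it drops out of the probability bound applied along a chain, $\Pr[\bigcap_{j\in S}\overline{A_j}]=\prod\Pr[\overline{A_j}\mid\cdots]\ge(1-\tfrac{1}{D+1})^{|S|}>0$.

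For the base case $|S|=0$ the inequality is just $\Pr[A_i]\le p$, which holds because $ep(D+1)\le 1$ forces $p\le\tfrac{1}{e(D+1)}\le\tfrac{1}{D+1}$. For the inductive step I would split $S=S_1\cup S_2$ with $S_1=S\cap N(i)$ and $S_2=S\setminus N(i)$, so $|S_1|\le D$. If $S_1=\varnothing$ then $A_i$ is independent of $\bigcap_{j\in S_2}\overline{A_j}$ and the conditional probability equals $\Pr[A_i]\le p\le\tfrac{1}{D+1}$. Otherwise I would write
\[
\Pr\!\left[A_i \,\Big|\, \bigcap_{j\in S}\overline{A_j}\right] = \frac{\Pr\!\left[A_i \cap \bigcap_{j\in S_1}\overline{A_j} \,\Big|\, \bigcap_{j\in S_2}\overline{A_j}\right]}{\Pr\!\left[\bigcap_{j\in S_1}\overline{A_j} \,\Big|\, \bigcap_{j\in S_2}\overline{A_j}\right]},
\]
bound the numerator above by $\Pr\!\big[A_i\mid\bigcap_{j\in S_2}\overline{A_j}\big]=\Pr[A_i]\le p$ using independence from $S_2$, and bound the denominator below by peeling $S_1=\{j_1,\dots,j_r\}$ ($r\le D$) one factor at a time: each factor $\Pr\!\big[\overline{A_{j_\ell}}\mid\bigcap_{m<\ell}\overline{A_{j_m}}\cap\bigcap_{j\in S_2}\overline{A_j}\big]$ is at least $1-\tfrac{1}{D+1}$ by the induction hypothesis, since the conditioning set has at most $|S|-1<|S|$ indices and omits $j_\ell$. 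This gives a denominator at least $\big(1-\tfrac{1}{D+1}\big)^{D}=\big(1+\tfrac1D\big)^{-D}>e^{-1}$, so the ratio is less than $ep\le\tfrac{1}{D+1}$, closing the induction.

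Finally, chaining the claim with $S=\{1,\dots,i-1\}$ for successive $i$ would give $\Pr\!\big[\bigcap_{i=1}^{k}\overline{A_i}\big]=\prod_{i=1}^{k}\Pr\!\big[\overline{A_i}\mid\bigcap_{j<i}\overline{A_j}\big]\ge\big(1-\tfrac{1}{D+1}\big)^{k}>0$, which is exactly the stated conclusion. I expect the only genuinely delicate point to be the bookkeeping in the inductive step: keeping the positivity statement and the probability bound coupled in a single induction so that every conditioning below is on a positive-probability event, and checking at each peeling stage that the conditioning index set is strictly smaller than $S$ so the hypothesis applies. All the numerical estimates, including $(1+1/D)^{D}<e$, are elementary.
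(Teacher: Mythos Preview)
The paper does not supply its own proof of this theorem: it is stated as the classical symmetric Lov\'asz Local Lemma and invoked purely as a tool in the proof of Theorem~\ref{thm:lowerboolean}. Your proposal is the standard Erd\H{o}s--Lov\'asz conditional-probability induction, and it is correct as written; there is nothing in the paper to compare it against.
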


\begin{theorem}\label{thm:lowerboolean}
$\BR^1_2(B_d)\geq 2d+1$ for $3\leq d\leq 8$ and $d\geq 13$.
\end{theorem}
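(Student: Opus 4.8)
The plan is to prove $\BR^1_2(B_d)\geq 2d+1$ by exhibiting a $2$-colouring of $B_{2d}$ with no monochromatic copy of $B_d$. Since $\levels(B_d)=d$ (the middle $d$ levels of any $B_n$ are $B_d$-free, while any $d+1$ consecutive levels of a large enough $B_n$ contain a copy of $B_d$), Proposition~\ref{prop:1ulow} already gives $\BR^1_2(B_d)\geq 2d$, so the whole content is the extra ``$+1$'', and in particular a colouring that respects only the level structure cannot suffice. The first ingredient I would record is a rigidity observation: a monotone injection between two copies of $B_d$ is necessarily an isomorphism (an injective monotone self-map of a finite lattice that preserves strict order forces the image of each singleton to be a singleton, hence the map is the automorphism induced by a permutation of the ground set). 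Consequently, any copy of $B_d$ inside $B_{2d}$ that is confined to exactly $d+1$ \emph{consecutive} levels must be an interval $[A,B]$ with $|B|-|A|=d$; this pins down the ``extremal'' copies appearing below.

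Next I would set up the random colouring. Fix a band-width parameter $k$. Colour every $v$ with $|v|<d-k$ with colour $1$, every $v$ with $|v|>d+k$ with colour $2$, and colour each remaining ``band'' element (those with $d-k\leq|v|\leq d+k$) independently and uniformly at random. A monochromatic copy of colour $1$ cannot use any level above $d+k$, so it is confined to levels $\{0,\dots,d+k\}$; symmetrically colour-$2$ copies live in levels $\{d-k,\dots,2d\}$. For a copy $f$ of $B_d$ confined to levels $\le d+k$, let $A^1_f$ be the event that every band element of $f(B_d)$ gets colour $1$; this event depends only on those random elements. A short argument (the elements of $f(B_d)$ below the band form a down-set injecting into a small Boolean lattice) shows the number of band elements of any such $f$ is at least $\sum_{i=d-k}^{d}\binom{d}{i}$, with equality for the interval $[\varnothing,M]$, $|M|=d$; hence $\Pr[A^1_f]\leq 2^{-\sum_{i=d-k}^{d}\binom{d}{i}}=:p$, and symmetrically for colour $2$. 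Two of these events are dependent only when the two copies share a band element, so the dependency degree $D$ is at most $2\cdot 2^d$ times $N_k$, the maximum number of copies of $B_d$ confined to one half of the levels through a fixed band element. By the Lov\'asz Local Lemma (Theorem~\ref{thm:local}), if $ep(D+1)\leq 1$ then with positive probability no $A^1_f$ or $A^2_g$ occurs, so a valid colouring of $B_{2d}$ exists and $\BR^1_2(B_d)\geq 2d+1$.

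It then remains to choose $k$ and verify $ep(D+1)\leq1$, i.e. roughly $2^d\cdot N_k\lesssim 2^{\sum_{i=d-k}^{d}\binom di}$. Here one uses the rigidity fact again to control $N_k$: confined copies with only $d$ ``active'' coordinates are forced (by rigidity) to be sub-cube copies, of which there are only about $(2d)^d$; a careful accounting of the slightly more flexible copies with up to $d+k$ active coordinates, together with the choice of $k$, is what produces the precise range of $d$ for which the inequality holds. For the finitely many small cases $3\leq d\leq 8$ one instead produces (or searches for, using the methods of Section~\ref{sec:computation}) an explicit good $2$-colouring of $B_{2d}$ directly.

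\textbf{The main obstacle} is exactly the dependency estimate. Counted crudely, the number of copies of $B_d$ passing through a fixed element of $B_{2d}$ is vastly larger than $p^{-1}$, so a naive application of the Local Lemma fails for every band width: a wide band makes $p$ tiny but makes far too many copies ``dangerous,'' while a narrow band keeps $N_k$ small but makes $p$ too large (indeed with the single middle level alone $p=\tfrac12$ and no colouring works at all). Threading this needle — using rigidity to force the problematic copies to be nearly sub-cubes and counting them economically as a function of $k$ — is the technical heart, and it is the precise point at which the argument succeeds for $d\geq 13$, fails to be conclusive for $d\in\{9,10,11,12\}$, and must be supplemented by hand for $3\leq d\leq 8$.
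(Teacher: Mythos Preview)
Your approach is genuinely different from the paper's, and the gap lies exactly where you flag it. You apply the Local Lemma to events of the form ``copy $f$ is monochromatic'', so you need roughly $2^{d}\,N_k\lesssim p^{-1}=2^{\sum_{i=d-k}^{d}\binom{d}{i}}$. But $N_k$ is enormous already for $k=1$: inside a single $B_{d+1}=[\varnothing,[d{+}1]]$, every down-set $D\subseteq B_d$ yields a distinct weak copy of $B_d$ via $S\mapsto S$ if $S\in D$ and $S\mapsto S\cup\{d{+}1\}$ otherwise, giving at least the Dedekind number $M(d)\sim 2^{\binom{d}{\lfloor d/2\rfloor}}$ copies confined to levels $\le d+1$, essentially all of which pass through a fixed band element such as $[d{-}1]\cup\{d{+}1\}$. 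Even after identifying copies with the same band intersection, the number of distinct events through that element is at least $2^{\binom{d}{2}}$, which already swamps $p^{-1}=2^{d+1}$ for every $d\ge 3$. Your rigidity lemma pins down only those copies spanning \emph{exactly} $d+1$ levels; once a single extra level is available the ``slightly more flexible'' copies explode in number, and the situation only worsens as $k$ grows. So the assertion that the argument ``succeeds for $d\ge 13$'' is unsupported, and I do not see how this particular needle can be threaded.

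The paper sidesteps counting copies altogether. It chooses $R_d\subseteq\binom{[2d]}{d}$ containing exactly one set from each complementary pair $\{S,S^C\}$, and gives colour~$1$ to all of levels $0,\dots,d-2$, to $R_d$, and to all of level $d+1$; everything else is colour~$2$. Complementation symmetry reduces the task to showing colour~$1$ is $B_d$-free, and since colour~$1$ skips level $d-1$ entirely and has height exactly $d+1$, any colour-$1$ copy of $B_d$ is forced level by level, so its existence comes down to some $T\in\binom{[2d]}{d+1}$ having at least $d$ of its $d$-subsets in $R_d$. The Local Lemma is now applied to the events $A_T$, one per such $T$, each with probability $(d+2)/2^{d+1}$ and dependency degree only $\tfrac12(d+1)(3d-2)$ (since $A_T$ and $A_{T'}$ interact only when $|T\cap T'|\in\{2,d\}$); this gives $ep(D+1)\le 1$ for $d\ge 13$, while for $3\le d\le 8$ an explicit $R_d$ is found by integer programming. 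The key idea your proposal is missing is to engineer the colouring so that the Local Lemma runs on a combinatorially thin auxiliary structure with polynomial dependency, rather than on the copies of $B_d$ themselves.
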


\begin{proof}
%For $d=3$, Figure \ref{fig:B6coloring} displays a $2$-coloring of $B_6$ which does not have a monochromatic copy of $B_3$. 
%
%Now, for $\F\subseteq B_n$, form two new families $\F^-,\F^+\subseteq B_{n+1}$ where $\F^-=\F$ and $\F^+=\{F\cup\{n+1\}:F\in\F\}$. 
%
%We claim that if $\F^-\cup\F^+$ contains a copy of $B_{d+1}$, then $\F$ contains a copy of $B_d$. Indeed, suppose $f:B_{d+1}\to\F^-\cup\F^+$ is a weak embedding and let $X^-$ and $X^+$ denote the elements of $B_{d+1}$ which $f$ maps to $\F^-$ and $\F^+$, respectively. For the sake of contradiction, suppose that $\F$ has no copy of $B_d$, then neither $X^-$ nor $X^+$ can contain a copy of $B_d$. However, as $X^-$ and $X^+$ partition $B_{d+1}$, this means there must be some $x^-\in X^-$, $x^+\in X^+$ with $x^+\subset x^-$. However, no element of $\F^+$ can lie below an element of $\F^-$, so $f(x^+)\not\subset f(x^-)$, contradicting the assumption that $f$ was a weak embedding.
%
%As such, if $\F\subseteq B_n$ is $B_d$-free, then $\F^-\cup\F^+\subseteq B_{n+1}$ is $B_{d+1}$-free. Now, let $N=\BR^1_t(B_d)-1$ and let $c$ be a $t$-coloring of $B_N$ that avoids monochromatic copies of $B_d$. For $i\in[t]$, let $\F_i=c^{-1}(i)$. By the earlier claim, $\F_i^-\cup\F_i^+$ is $B_{d+1}$-free for each $i$, and further, $\bigcup_{i=1}^t(\F_i^-\cup\F_i^+)\supseteq B_{N+1}$. Thus, these sets form a $t$-coloring of $B_{N+1}$ which has no monochromatic copies of $B_{d+1}$, so $\BR^1_t(B_{d+1})\geq\BR^1_t(B_d)+1$. In particular, $\BR^1_2(B_d)\geq\BR^1_2(B_3)+d-3\geq 7+d-3$
%
We will construct a $2$-coloring of $B_{2d}$ that does not contain a monochromatic copy of $B_d$. In order to do so, we will rely on the existence of a family $R_d\subseteq{[2d]\choose d}$ with the following two properties:
\begin{enumerate}
\item For each $S\in{[2d]\choose d}$, $R_d$ contains exactly one of $S$ and $S^C$.
\item For each $T\in{[2d]\choose d+1}$, $R_d$ contains at most $d-1$ subsets of $T$.
\end{enumerate}

For $3\leq d\leq 8$, we have verified that $R_d$ exists by solving a suitable integer program, so we only need find $R_d$ for $d\geq 13$.

Independently for a pair $\{S,S^C\}\subseteq{[2d]\choose d}$, uniformly select which of $S$ and $S^C$ is in $R_d$. Now, for $T\in{[2d]\choose d+1}$, let $A_T$ be the the event in which $R_d$ contains at least $d$ subsets of $T$. As $|T|=d+1$ and thus contains at most one of $S$ and $S^C$ for each $S\in{[2d]\choose d}$,
\[
\Pr[A_T]={d+1\over 2^{d+1}}+{1\over 2^{d+1}}={d+2\over 2^{d+1}}.
\]
Now we note that $A_T$ and $A_{T'}$ are independent unless either $T$ and $T'$ have a common $d$-subset or $T'$ contains the complement of a $d$-subset of $T$; in other words, as $|T|=|T'|=d+1$, $A_T$ and $A_{T'}$ are independent unless $|T\cap T'|\in\{2,d\}$. For a fixed $T$, there are precisely ${d+1\choose d}{d-1\choose 1}+{d+1\choose 2}={1\over 2}(d+1)(3d-2)$ choices of $T'$ for which this happens. As such, $A_T$ is independent of all but $D={1\over 2}(d+1)(3d-2)$ of the other $A_{T'}$'s. As $d\geq 13$, we have
\[
ep(D+1)=e{d+2\over 2^{d+1}}\left({1\over 2}(d+1)(3d-2)+1\right)\leq 1,
\]
so Theorem \ref{thm:local} implies that there is a positive probability that none of the $A_T$'s occur. In other words, for all $d\geq 13$, $R_d$ exists.

We construct a coloring $c$ of $B_{2d}$ as follows: for $x\in B_{2d}$, if $x\in R_d$ or $|x|\in\{0,1,\dots,d-2,d+1\}$, let $c(x)=1$ and otherwise let $c(x)=2$ (see Figure \ref{fig:B6coloring} for an example when $d=3$). By the first property of $R_d$, we observe that $c(x)=1$ if and only if $c(x^C)=2$, so it suffices to show that the family $\F=c^{-1}(1)$ does not contain a copy of $B_d$. Let $\F_i=\F\cap{[2d]\choose i}$. We first observe that $\F$ has height $d+1$, so if it were to contain a copy of $B_d$, it must be the case that the $i$th level of the copy of $B_d$ lies entirely in $\F_i$ for $i\in\{0,\dots,d-2\}$, the $(d-1)$st level lies entirely in $\F_d$ and the maximal element lies in $\F_{d+1}$. Thus, there must be some $x\in\F_{d+1}$ for which $|\down(x)\cap\F_d|\geq d$. However, as $\F_d=R_d$, this is not the case by the second property of $R_d$. Hence, $c$ does not admit a monochromatic copy of $B_d$.
\end{proof}

\begin{figure}[h]
\scalebox{.7}{\begin{tikzpicture}[vtx/.style={shape=coordinate}]
\node[vtx,label=left:$124$,circle=2pt,fill=black] (124) at (-5.25,0) {};
\node[vtx,label=left:$146$,circle=2pt,fill=black] (146) at (-2.25,0) {};
\node[vtx,label=left:$156$,circle=2pt,fill=black] (156) at (-.75,0) {};
\node[vtx,label=right:$256$,circle=2pt,fill=black] (256) at (3.75,0) {};
\node[vtx,label=right:$245$,circle=2pt,fill=black] (245) at (2.25,0) {};
\node[vtx,label=right:$236$,circle=2pt,fill=black] (236) at (.75,0) {};
\node[vtx,label=right:$345$,circle=2pt,fill=black] (345) at (5.25,0) {};
\node[vtx,label=right:$346$,circle=2pt,fill=black] (346) at (6.75,0) {};
\node[vtx,label=left:$135$,circle=2pt,fill=black] (135) at (-3.75,0) {};
\node[vtx,label=left:$123$,circle=2pt,fill=black] (123) at (-6.75,0) {};

\node[vtx,label=left:$1$,circle=2pt,fill=black] (1) at (-3.75,-3) {};
\node[vtx,label=left:$2$,circle=2pt,fill=black] (2) at (-2.25,-3) {};
\node[vtx,label=left:$3$,circle=2pt,fill=black] (3) at (-.75,-3) {};
\node[vtx,label=right:$4$,circle=2pt,fill=black] (4) at (.75,-3) {};
\node[vtx,label=right:$5$,circle=2pt,fill=black] (5) at (2.25,-3) {};
\node[vtx,label=right:$6$,circle=2pt,fill=black] (6) at (3.75,-3) {};

\node[vtx,label=above:$1234$,circle=2pt,fill=black] (1234) at (-7,3) {};
\node[vtx,label=above:$1235$,circle=2pt,fill=black] (1235) at (-6,3) {};
\node[vtx,label=above:$1236$,circle=2pt,fill=black] (1236) at (-5,3) {};
\node[vtx,label=above:$1245$,circle=2pt,fill=black] (1245) at (-4,3) {};
\node[vtx,label=above:$1246$,circle=2pt,fill=black] (1246) at (-3,3) {};
\node[vtx,label=above:$1256$,circle=2pt,fill=black] (1256) at (-2,3) {};
\node[vtx,label=above:$1345$,circle=2pt,fill=black] (1345) at (-1,3) {};
\node[vtx,label=above:$1346$,circle=2pt,fill=black] (1346) at (0,3) {};
\node[vtx,label=above:$1356$,circle=2pt,fill=black] (1356) at (1,3) {};
\node[vtx,label=above:$1456$,circle=2pt,fill=black] (1456) at (2,3) {};
\node[vtx,label=above:$2345$,circle=2pt,fill=black] (2345) at (3,3) {};
\node[vtx,label=above:$2346$,circle=2pt,fill=black] (2346) at (4,3) {};
\node[vtx,label=above:$2356$,circle=2pt,fill=black] (2356) at (5,3) {};
\node[vtx,label=above:$2456$,circle=2pt,fill=black] (2456) at (6,3) {};
\node[vtx,label=above:$3456$,circle=2pt,fill=black] (3456) at (7,3) {};

\node[vtx,label=below:$\varnothing$,circle=2pt,fill=black] (0) at (0,-4) {};

\node[label=left:${[6]\choose 0}$] (00) at (-8,-4) {};
\node[label=left:${[6]\choose 1}$] (11) at (-8,-3) {};
\node[label=left:$R_3$] (22) at (-8,0) {};
\node[label=left:${[6]\choose 4}$] (44) at (-8,3) {};

\draw (1)--(0)--(2)
(3)--(0)--(4)
(5)--(0)--(6)
(1)--(124)
(1)--(146)
(1)--(156)
(1)--(135)
(1)--(123)
(2)--(124)
(2)--(256)
(2)--(245)
(2)--(236)
(2)--(123)
(3)--(236)
(3)--(345)
(3)--(346)
(3)--(135)
(3)--(123)
(4)--(124)
(4)--(146)
(4)--(245)
(4)--(345)
(4)--(346)
(5)--(135)
(5)--(156)
(5)--(345)
(5)--(256)
(5)--(245)
(6)--(146)
(6)--(156)
(6)--(256)
(6)--(236)
(6)--(346)

(124)--(1234)
(124)--(1245)
(124)--(1246)
(146)--(1246)
(146)--(1346)
(146)--(1456)
(156)--(1256)
(156)--(1356)
(156)--(1456)
(256)--(1256)
(256)--(2356)
(256)--(2456)
(245)--(1245)
(245)--(2345)
(245)--(2456)
(236)--(1236)
(236)--(2346)
(236)--(2356)
(345)--(1345)
(345)--(2345)
(345)--(3456)
(346)--(1346)
(346)--(2346)
(346)--(3456)
(135)--(1235)
(135)--(1345)
(135)--(1356)
(123)--(1234)
(123)--(1235)
(123)--(1236)
;
\end{tikzpicture} }
\caption{A coloring of $B_6$ which avoids monochromatic copies of $B_3$. We only show those elements which receive color $1$; all other elements receive color $2$. \label{fig:B6coloring}}
\end{figure}
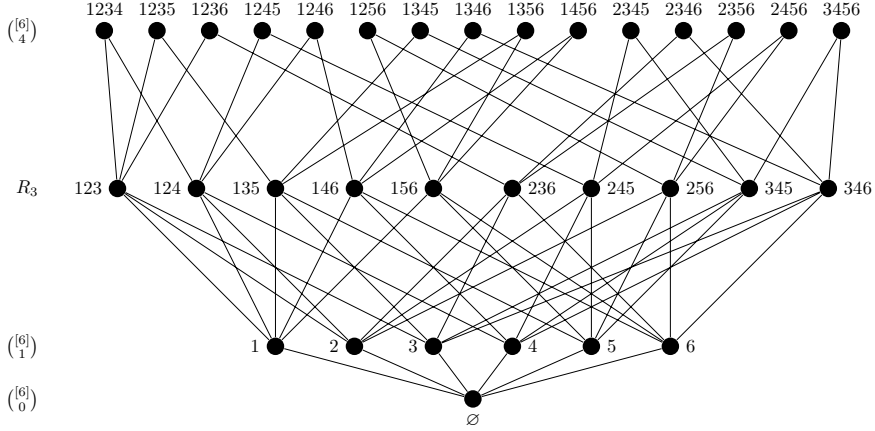

Unfortunately, computational limits prevent us from showing that $\BR^1_2(B_d)\geq 2d+1$ for all $d\geq 3$, but $R_d$ should certainly exist for $d\in\{9,10,11,12\}$. Further, even though Theorem \ref{thm:lowerboolean} only gives an improvement of $1$ over the trivial lower bound of $2d$, any improvement on the trivial lower bound in Proposition \ref{prop:1ulow} is of interest.

To investigate upper bounds on the Boolean Ramsey numbers of posets with size much larger than their height, we consider a structure that is \emph{wider} than a single chain and use that to consider an extension of Lubell functions.
We use an idea that Gr\'osz, Methuku and Tompkins~\cite{GMT14} used to approach the Tur\'an-type question.
Let $A \subseteq B$ and define the \emph{interval from $A$ to $B$}, denoted $[A,B]$, to be the collection of sets $C$ where $A \subseteq C \subseteq B$; we say the interval $[A, B]$ has \emph{height $m$} if $m = |B \setminus A|$. 
For a full chain $\A = (A_0,\dots,A_n)$ in $B_n$ where $\varnothing = A_0 \subset A_1 \subset \cdots\subset A_n = [n]$, define the \emph{$m$-interval chain} $\C_m(\A)$ as 
\[
\C_m(\A)=\bigcup_{i=0}^{n-m}\left[A_i,A_{i+m}\right].
\]
Gr\'osz, Methuku and Tompkins~\cite{GMT14} noted $|\C_m(\A)|=(n-m+2)2^{m-1}$ for all $m$-interval chains $\C_m(\A)$, which will be important.
%\begin{prop}\label{prop:size}
%If $\C_m(\A)$ is an $m$-interval chain in $B_n$, then .
%\end{prop}
%\begin{proof}
%Let $\A=(A_0,\dots,A_n)$ be a full chain in $B_n$.
%For $0\leq i\leq n-m-1$, define $U_i=[A_i,A_{i+m}]\setminus\up(A_{i+1})$. 
%Observe that $U_0 \cup \dots \cup U_{n-m-1}\cup\up(A_{n-m})$ is a partition of $\C_m(\A)$, and hence,
%\[
%|\C_m(\A)|=\sum_{i=0}^{n-m-1}|U_i|+|\up(A_{n-m})|=(n-m)2^{m-1}+2^m=(n-m+2)2^{m-1}. \qedhere
%\]
%\end{proof}

For a family $\F \subseteq B_n$ and $1\leq m\leq n$, define the \emph{$m$-interval Lubell function of $\F$}, denoted $\lu_n^{(m)}(\F)$, as
\[
\lu_n^{(m)}(\F)=\frac{1}{n!}\sum_{\A}|\F\cap\C_m(\A)|
\]
where the sum is taken over all full chains $\A$. 
Observe that $\lu_n^{(1)}(\F)=\lu_n(\F)$. 
For a poset $P$ define $L_n^{(m)}(P) =\max\{\lu_n^{(m)}(\F):\F\subseteq B_n,\text{ $\F$ is $P$-free}\}$.

Due to the size of an $m$-interval chain, $\lu_n^{(m)}(B_n)=(n-m+2)2^{m-1}$. 
With this in mind, we arrive at a direct extension of Theorem \ref{lem:lubellbound}.

\begin{prop}\label{prop:chainlubell}
Let $P_1,\dots,P_t$ be posets. If $\sum_{i=1}^t L_n^{(m)}(P_i)<(n-m+2)2^{m-1}$ for some $m$ with $1\leq m\leq n$, then $\BR^1(P_1,\dots,P_t)\leq n$.
\end{prop}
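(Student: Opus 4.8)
The plan is to imitate the proof of Theorem~\ref{lem:lubellbound} verbatim, simply replacing the ordinary Lubell function $\lu_n$ by its $m$-interval analogue $\lu_n^{(m)}$. Suppose for contradiction that $\BR^1(P_1,\dots,P_t) > n$, so there is a $t$-coloring $c$ of $B_n$ that avoids a copy of $P_i$ in color $i$ for every $i \in [t]$. Put $\F_i = c^{-1}(i)$. Since $\F_i$ is $P_i$-free, the definition of $L_n^{(m)}(P_i)$ as a maximum over $P_i$-free families gives $\lu_n^{(m)}(\F_i) \le L_n^{(m)}(P_i)$ for each $i$; this is the only place the avoidance hypothesis is used.

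The key step is linearity of $\lu_n^{(m)}$ over disjoint families. For each fixed full chain $\A$, the families $\F_1,\dots,\F_t$ partition $B_n$, so $|B_n \cap \C_m(\A)| = \sum_{i=1}^t |\F_i \cap \C_m(\A)|$ trivially. Averaging over all full chains $\A$ and dividing by $n!$ yields $\lu_n^{(m)}(B_n) = \sum_{i=1}^t \lu_n^{(m)}(\F_i)$. Combining this with the stated identity $\lu_n^{(m)}(B_n) = (n-m+2)2^{m-1}$ (a consequence of the Grósz–Methuku–Tompkins count $|\C_m(\A)| = (n-m+2)2^{m-1}$, which is the same for every $\A$ and hence survives the averaging) and the per-color bounds gives
\[
(n-m+2)2^{m-1} = \lu_n^{(m)}(B_n) = \sum_{i=1}^t \lu_n^{(m)}(\F_i) \le \sum_{i=1}^t L_n^{(m)}(P_i) < (n-m+2)2^{m-1},
\]
a contradiction. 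Therefore no such coloring exists, so $\BR^1(P_1,\dots,P_t) \le n$, as claimed.

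I do not anticipate a genuine obstacle: everything rests on (i) additivity of $|\F \cap \C_m(\A)|$ under disjoint unions of $\F$, which is immediate from the definition, and (ii) the fact that $|\C_m(\A)|$ does not depend on the chain $\A$, which is already recorded in the excerpt. The only thing worth stating cleanly is that $\lu_n^{(m)}$ is defined as a genuine average (the $\tfrac{1}{n!}\sum_\A$), so that additivity per-chain passes to additivity of the averaged functional; once that is noted, the argument is a two-line computation identical in spirit to Theorem~\ref{lem:lubellbound}.
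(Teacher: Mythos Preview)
Your proposal is correct and matches the paper's intent exactly: the paper does not even write out a proof, presenting the proposition as ``a direct extension of Theorem~\ref{lem:lubellbound},'' and your argument is precisely that extension. The linearity of $\lu_n^{(m)}$ over the disjoint color classes together with the identity $\lu_n^{(m)}(B_n)=(n-m+2)2^{m-1}$ yields the contradiction just as you describe.
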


In order to apply Proposition \ref{prop:chainlubell} to attain a general upper bound on 1-uniform Boolean Ramsey numbers, we will make use of the following result.

\begin{theorem}[Gr\'osz, Methuku and Tompkins~{\cite[Lemma 12]{GMT14}}]\label{thm:gmtbound}
For $m\geq 2$, if $P$ is a poset of height $h(P)$ and $\F$ is $P$-free, then for any $m$-interval chain $\C_m(\A)$,
\[
|\F\cap\C_m(\A)|\leq |P|-1+(h(P)-1)(3m-5)2^{m-2}.
\]
\end{theorem}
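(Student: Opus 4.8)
### Proof proposal for Theorem~\ref{thm:gmtbound}

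The plan is to bound $|\F \cap \C_m(\A)|$ by decomposing the $m$-interval chain into its constituent height-$m$ intervals and controlling, within each such interval, how many elements of $\F$ can appear without producing a copy of $P$. Fix a full chain $\A = (A_0,\dots,A_n)$ and write $\C_m(\A) = \bigcup_{i=0}^{n-m}[A_i,A_{i+m}]$. Each interval $[A_i,A_{i+m}]$ is isomorphic to $B_m$, so it contains at most $h(P)-1$ full ``heights'' worth of $\F$-elements before $\F$ is forced to contain a chain of length $h(P)$ — but a chain of length $h(P)$ need not contain $P$ unless $P$ is itself a chain, so this crude bound is not enough. The right idea is to look at the restriction of $\F$ to a single interval $[A_i, A_{i+m}] \cong B_m$ and argue that, since $\F$ is $P$-free, the trace of $\F$ on this sub-Boolean-lattice is a $P$-free family in $B_m$; but more is true because the intervals overlap heavily along the chain, and one should think of $\F \cap \C_m(\A)$ as living in a ``thickened chain'' of width $2^{m-1}$.

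The key steps I would carry out are as follows. First, I would introduce the notion of the \emph{level} of an element $x \in \C_m(\A)$ relative to $\A$: say $x$ is at level $\ell$ if $\ell$ is minimal with $x \subseteq A_\ell$ — equivalently, $x \in [A_{\ell-m}, A_\ell] \setminus [A_{\ell-m}, A_{\ell-1}]$ roughly speaking — so that elements of $\C_m(\A)$ are stratified into $n - m + 2$ groups, each of size at most $2^{m-1}$ (the two extremal groups being smaller). The union of any window of consecutive levels then sits inside a single height-$m$ interval. Second, I would observe that if $\F \cap \C_m(\A)$ meets $h(P)$ \emph{or more} distinct levels in a way that ``stacks up'' along chains, one extracts a long chain in $\F$; combined with the fact that any height-$m$ interval meeting $\F$ is $P$-free, the contribution from the interior must be controlled level-by-level. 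Third — and this is where the numbers $(3m-5)2^{m-2}$ come from — I would show that between any two levels at which $\F$ is ``full'' (contributes close to $2^{m-1}$ elements), the family $\F$ restricted to the intervening $B_m$-copy already contains a copy of $P$ unless the number of ``nearly-full'' levels is at most $h(P)-1$, and each non-full level contributes a bounded deficit; carefully, the total over all levels telescopes to $|P| - 1 + (h(P)-1)(3m-5)2^{m-2}$. The first summand $|P|-1$ is the naive bound on one level's worth of a $P$-free family once one level is pinned down; the $(h(P)-1)$ factor counts how many ``active bands'' of width controlled by $m$ can occur before a $P$-copy is forced.

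The main obstacle, I expect, is making precise the combinatorial extraction of a copy of $P$ (not merely a long chain) from a configuration in which $\F$ is large on too many consecutive levels of $\C_m(\A)$. Because $P$ has height $h(P)$, one needs $h(P)$ levels of $\F$ that are mutually nested along some chain and individually wide enough to host the ``antichain parts'' of $P$ between consecutive comparabilities; quantifying ``wide enough'' inside a $B_m$ and tracking how the width budget $2^{m-1}$ gets spent as one moves through the interval is the delicate part, and it is precisely here that the slightly mysterious constant $(3m-5)2^{m-2}$ (rather than $(h(P)-1)2^{m-1}$ times the number of levels) must be squeezed out — presumably by noting that the first and last few levels of each interval are automatically thin, so one does not pay full width $2^{m-1}$ for every level. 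I would handle this by an amortized/charging argument: charge each $\F$-element either to its level (paying at most the level's capacity) or to the ``event'' of closing off one of the at most $h(P)-1$ active bands, and verify the charges sum as claimed.
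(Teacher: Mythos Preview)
This statement is quoted in the paper as an external result of Gr\'osz, Methuku and Tompkins (their Lemma~12); the present paper does \emph{not} supply a proof, it only applies the bound in Theorem~\ref{thm:mlubell}. So there is no ``paper's own proof'' to compare against.

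That said, your proposal is not yet a proof. You correctly identify the coarse structure---stratify $\C_m(\A)$ by levels along the chain $\A$, note that each level has at most $2^{m-1}$ elements, and try to argue that if $\F$ is dense on $h(P)$ or more well-separated levels then one can build a copy of $P$---but the crucial step is missing. To force a copy of an \emph{arbitrary} poset $P$ (not just a chain $C_{h(P)}$) you must actually exhibit an embedding of $P$, and you never say how to do this. Your phrases ``wide enough to host the antichain parts of $P$,'' ``active bands,'' and ``amortized/charging argument'' are placeholders, not arguments: you neither define an active band, nor specify what is being charged to what, nor verify any inequality. In particular you give no mechanism that produces the term $|P|-1$ (as opposed to something depending on the level sizes of $P$), and you openly describe the constant $(3m-5)2^{m-2}$ as ``slightly mysterious.''

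The actual argument in \cite{GMT14} is more concrete: one shows that inside a single interval $[A_i,A_{i+m}]\cong B_m$, any family of size at least $|P|$ that contains both $A_i$ and $A_{i+m}$ already contains a copy of $P$ (because $B_m$ with its top and bottom contains every poset on at most $2^m$ elements of height at most $m+1$, and one can embed $P$ greedily). One then covers $\C_m(\A)$ by overlapping intervals and counts how many intervals can have $\F$ missing $A_i$ or $A_{i+m}$; the parameter $h(P)$ enters because a $P$-free family meets any chain in at most $h(P)-1$ points, bounding how many of the $A_j$ can lie in $\F$. The constant $(3m-5)2^{m-2}$ then drops out of an explicit inclusion--exclusion over the overlapping intervals, not from a vague amortization. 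If you want to reconstruct the proof, start by proving the single-interval embedding statement precisely, then do the bookkeeping; the sketch you have now does neither.
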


We now provide a general upper bound on the 1-uniform Boolean Ramsey number for posets whose sizes are large compared to their heights.

\begin{theorem}\label{thm:mlubell}
Let $P_1,\dots,P_t$ be posets, $S=\sum_{i=1}^t(|P_i|-1)$, and $H=\sum_{i=1}^t(h(P_i)-1)$.
\[
\BR^1(P_1,\dots,P_t) \leq  \left({3\over 2}H+1\right)\left(\lg\left({S\over H}\right)+1\right).
\]
\end{theorem}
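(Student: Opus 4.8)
The strategy is to apply Proposition~\ref{prop:chainlubell} with a carefully chosen value of $m$. By Theorem~\ref{thm:gmtbound}, for each poset $P_i$ and each $m$-interval chain $\C_m(\A)$ we have $|\F_i\cap\C_m(\A)|\leq (|P_i|-1)+(h(P_i)-1)(3m-5)2^{m-2}$, provided $m\geq 2$. Averaging over all full chains $\A$ and summing over $i$, linearity of $\lu_n^{(m)}$ gives
\[
\sum_{i=1}^t L_n^{(m)}(P_i)\leq S+H(3m-5)2^{m-2}.
\]
So by Proposition~\ref{prop:chainlubell}, it suffices to find $m$ with $2\leq m\leq n$ such that
\[
S+H(3m-5)2^{m-2}<(n-m+2)2^{m-1},
\]
and then $\BR^1(P_1,\dots,P_t)\leq n$. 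Dividing by $2^{m-1}$, this is equivalent to $n>m-2+\frac{S}{2^{m-1}}+\frac{H(3m-5)}{2}$, so it is enough to take
\[
n=\left\lceil m-2+\frac{S}{2^{m-1}}+\frac{3m-5}{2}H\right\rceil+1
\]
(or just the smallest integer strictly exceeding the right side), and then optimize over $m$.

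**Choosing $m$.** The term $\frac{S}{2^{m-1}}$ decreases geometrically in $m$ while $\frac{3m-5}{2}H$ grows linearly, so the right balance is near $2^{m-1}\approx S/H$, i.e. $m\approx \lg(S/H)+1$. I would set $m=\lceil \lg(S/H)\rceil+1$ (noting $S\geq H$ since $|P_i|\geq h(P_i)$, so $S/H\geq 1$ and $m\geq 2$ as required). With this choice $2^{m-1}\geq S/H$, so $\frac{S}{2^{m-1}}\leq H$, and $m-1\leq \lg(S/H)+1$, so $m\leq \lg(S/H)+2$. Substituting these bounds into the expression for $n$:
\[
n \;\leq\; m-2+H+\frac{3m-5}{2}H+1 \;=\; (m-1)+\frac{3m-3}{2}H \;=\; (m-1)+\frac{3}{2}(m-1)H \;=\; (m-1)\left(\tfrac{3}{2}H+1\right).
\]
Since $m-1\leq \lg(S/H)+1$, this yields $\BR^1(P_1,\dots,P_t)\leq \left(\tfrac{3}{2}H+1\right)\left(\lg(S/H)+1\right)$, as claimed. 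One should double-check the ceiling/strict-inequality bookkeeping — we need $n$ strictly larger than the expression, but the ``$+1$'' absorbs the ceiling, and the slack from $\frac{S}{2^{m-1}}\leq H$ versus the target is comfortable; alternatively carry the ceiling symbolically and note $\lceil x\rceil\leq x+1$ only costs an additive constant that is dominated.

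**Main obstacle and loose ends.** The conceptual content is entirely in Theorem~\ref{thm:gmtbound} and Proposition~\ref{prop:chainlubell}, which we are permitted to assume; the real work is the elementary optimization, and the only genuine care needed is (i) verifying $m\geq 2$ so that Theorem~\ref{thm:gmtbound} applies — this is where $S\geq H$ is used — and (ii) making sure the rounding in the choice of $m$ and of $n$ does not break the strict inequality required by Proposition~\ref{prop:chainlubell}. The cleanest writeup fixes $m=\lceil\lg(S/H)\rceil+1$, plugs in, and bounds $m-2+\frac{S}{2^{m-1}}+\frac{3m-5}{2}H \leq (m-1)(\frac32 H+1)-1 \leq (\lg(S/H)+1)(\frac32 H+1)-1$, so that the smallest integer exceeding it is at most $(\lg(S/H)+1)(\frac32 H+1)$. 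A minor edge case worth a sentence: if some $h(P_i)=1$ for all $i$ then $H=0$ and the statement is vacuous/degenerate (the $P_i$ are antichains, handled trivially since a single point of $B_n$ already gives a copy), so we may assume $H\geq 1$.
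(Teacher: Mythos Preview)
Your approach is essentially identical to the paper's: both apply Proposition~\ref{prop:chainlubell} with the bound from Theorem~\ref{thm:gmtbound}, then optimize $m$ near $\lg(S/H)$ and simplify. The paper argues contrapositively (set $n=\BR^1-1$ and derive $n\leq(\frac32 H+1)(\lg(S/H)+1)-1$) with the choice $m=\lfloor 2+\lg(S/H)\rfloor$, while you work forward with $m=\lceil\lg(S/H)\rceil+1$; the algebra is the same.

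One small slip: you assert $m\geq 2$ from $S/H\geq 1$, but when $S=H$ (i.e.\ every $P_i$ is a chain) you get $\lg(S/H)=0$ and hence $m=\lceil 0\rceil+1=1$, so Theorem~\ref{thm:gmtbound} does not apply. The paper's choice $m=\lfloor 2+\lg(S/H)\rfloor$ avoids this, giving $m=2$ in that case; alternatively you can dispose of $S=H$ separately since then the bound reduces to $\frac32 H+1$, which dominates the exact value $\BR^1(C_{n_1},\dots,C_{n_t})=H$. Otherwise your bookkeeping (including the check that $n\geq m$, and the handling of strict inequality via the ``$-1$'' slack) is fine.
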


\begin{proof}
If $n=\BR^1(P_1,\dots,P_t)-1$, then $\sum_{i=1}^t L_n^{(m)}(P_i)\geq(n-m+2)2^{m-1}$ whenever $1\leq m\leq n$  by Proposition \ref{prop:chainlubell}. 
Therefore, by Theorem \ref{thm:gmtbound},
\[
(n-m+2)2^{m-1}\leq\sum_{i=1}^t L_n^{(m)}(P_i)\leq\sum_{i=1}^t\left((|P_i|-1)+(h(P_i)-1)(3m-5)2^{m-2}\right)= S+(3m-5)2^{m-2}H
\]
for all $2\leq m\leq n$. As $S\geq H$, $\lg\left({S\over H}\right)\geq 0$, so set $m=\left\lfloor 2+\lg\left({S\over H}\right)\right\rfloor$. Hence,
\begin{align*}
n &\leq \left({3\over 2}H+1\right)m-{5\over 2}H+2^{1-m}S-2\\
&= \left({3\over 2}H+1\right)\left\lfloor 2+\lg\left({S\over H}\right)\right\rfloor-{5\over 2}H+2^{1-\left\lfloor 2+\lg\left({S\over H}\right)\right\rfloor}S-2\\
&\leq \left({3\over 2}H+1\right)\left(2+\lg\left({S\over H}\right)\right)-{5\over 2}H+2^{-\lg\left({S\over H}\right)}S-2\\
&= {3\over 2}H\lg\left({S\over H}\right)+{3\over 2}H+\lg\left({S\over H}\right) \\
&= \left({3\over 2}H+1\right)\left(\lg\left({S\over H}\right)+1\right)-1.\qedhere
\end{align*}
\end{proof}

A direct application of Theorem \ref{thm:mlubell} presents reasonable bounds on the Boolean Ramsey number of various poset families.

\begin{cor}\label{cor:bounds}
For positive integers $d,r_1,\dots,r_t,s_1,\dots,s_t$,
\begin{align*}
\BR_t^1(B_d) &\leq \left({3\over 2}dt+1\right)\left(\lg\left({2^d-1\over d}\right)+1\right)\leq 2d^2t,\\
\BR^1(\bowtie_{s_1}^{r_1},\dots,\bowtie_{s_t}^{r_t}) &\leq \left({3\over 2}t+1\right)\left(\lg\left({1\over t}\sum_{i=1}^t(r_i+s_i-1)\right)+1\right),\\
\BR^1(\vee_{r_1},\dots,\vee_{r_s},\wedge_{r_{s+1}},\dots,\wedge_{r_t})&\leq\left({3\over 2}t+1\right)\left(\lg\left({1\over t}\sum_{i=1}^t r_i\right)+1\right), \text{and}\\
\BR^1(\diamond_{r_1},\dots,\diamond_{r_t}) &\leq \left( 3t+1\right)\lg\left({1\over t}\sum_{i=1}^t(r_i+1)\right).\end{align*}
\end{cor}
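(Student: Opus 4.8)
The plan is to derive all four inequalities as direct specializations of Theorem~\ref{thm:mlubell}; the only work is to record $|P_i|$ and $h(P_i)$ for each poset and then simplify. Recall that $|B_d| = 2^d$ and $h(B_d) = d+1$; that $|\vee_r| = |\wedge_r| = r+1$ with $h(\vee_r) = h(\wedge_r) = 2$; that $|\bowtie_s^r| = r+s$ with $h(\bowtie_s^r) = 2$; and that $|\diamond_r| = r+2$ with $h(\diamond_r) = 3$. Since $|P_i| \geq h(P_i)$ in every case, we have $S \geq H$ and Theorem~\ref{thm:mlubell} applies.

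For the mixed cup/cap family (with $s$ cups of orders $r_1,\dots,r_s$ and $t-s$ caps of orders $r_{s+1},\dots,r_t$), I would compute $S = \sum_{i=1}^t r_i$ and $H = \sum_{i=1}^t(2-1) = t$, so that $\frac32 H + 1 = \frac32 t + 1$ and $S/H = \frac1t\sum_{i=1}^t r_i$; substituting into Theorem~\ref{thm:mlubell} gives the third inequality verbatim. The butterfly family is the identical computation with $S = \sum_{i=1}^t (r_i+s_i-1)$ and $H = t$. For the diamond family, $S = \sum_{i=1}^t(r_i+1)$ and $H = \sum_{i=1}^t 2 = 2t$, so $\frac32 H + 1 = 3t+1$ and, using $\lg 2 = 1$,
\[
\lg\!\left(\frac{S}{H}\right) + 1 = \lg\!\left(\frac{1}{2t}\sum_{i=1}^t(r_i+1)\right) + \lg 2 = \lg\!\left(\frac1t\sum_{i=1}^t(r_i+1)\right),
\]
which is exactly the claimed form once the ``$+1$'' is absorbed into the logarithm.

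For $\BR_t^1(B_d)$, setting all $P_i = B_d$ gives $S = t(2^d-1)$ and $H = td$, and Theorem~\ref{thm:mlubell} immediately yields $\BR_t^1(B_d) \le \left(\frac32 dt + 1\right)\left(\lg((2^d-1)/d) + 1\right)$. The only step beyond substitution is the final estimate $\le 2d^2 t$, which I would prove by bounding the two factors separately: from $(2^d-1)/d < 2^d/d$ we get $\lg((2^d-1)/d) + 1 < d + 1 - \lg d \le d$ for $d \ge 2$, whence $\left(\frac32 dt+1\right)\left(\lg((2^d-1)/d)+1\right) < \frac32 d^2 t + d \le 2d^2 t$, the last inequality because $dt \ge 2$. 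The degenerate case $d = 1$ is trivial since $B_1 = C_2$ and $\BR_t^1(C_2) = t+1 \le 2t$ by Theorem~\ref{thm:boolchain}. This short chain of elementary estimates for the $B_d$ bound is the only mildly delicate point; everything else is bookkeeping, so I do not anticipate a real obstacle.
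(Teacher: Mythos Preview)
Your proposal is correct and matches the paper's approach exactly: the paper presents this corollary without proof, simply noting that it is ``a direct application of Theorem~\ref{thm:mlubell},'' and your substitutions of $|P_i|$ and $h(P_i)$ for each family are precisely what is needed. One small slip: $\BR_t^1(C_2)=t$, not $t+1$ (color $B_{t-1}$ by level to see the lower bound, and use pigeonhole on a maximal chain in $B_t$ for the upper), though this only strengthens your $d=1$ check.
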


%Corollary \ref{cor:bounds} brings to light an interesting phenomenon. 
%If $P_1,\dots,P_t$ are posets of bounded height ($h(P_i) \leq h_0$ for some constant $h_0$ not dependent on $t$) and $s = \frac{1}{t}\sum_{i=1}^t |P_i|$, then 
%\[
%\BR^1(P_1,\dots,P_t)\leq O(t\log s).
%\]

%. As such, $e(\diamond_r)\geq \lfloor\lg(r+1)\rfloor+1$, and we thus find that $\BR^1_t(\diamond_r)=\Theta(t\log r)$.

Unfortunately, Theorem \ref{thm:mlubell} only allows us the show that $\BR^1_t(B_d)$ is at most quadratic in $d$ while we in fact suspect that it is linear.

%We conclude this section with one final application of Theorem \ref{lem:lubellbound}.
%Kramer, Martin and Young~\cite{KMY13} proved that $L_n(B_{2})= 2.25+o(1)$.
%Therefore, we observe that
%\[
%2t\leq \BR_t^1(B_2)\leq(2.25+o(1))t
%\] 
%where $o(1)\to 0$ as $t\to\infty$, which is better than the na\"ive bound of $\BR_t^1(B_2)\leq 3t$ given by Proposition \ref{prop:1ulow}.
%It is conjectured that $L_n(B_2) = 2 + o(1)$, which would imply a bound similar to, but weaker than that implied by Conjecture~\ref{conj:1unif}.

\section{2-Uniform Boolean Ramsey Numbers}\label{sec:2unif}

We now focus on 2-uniform partially-ordered Ramsey numbers. 
Due to recent interest in ordered Ramsey numbers, we will also include results concerning chain Ramsey numbers.
We also will state our results in the $k$-uniform case when possible.

\begin{prop}\label{prop:chainboundbool}
Let $G_1,\dots,G_t$ be $k$-uniform pographs.
\[
\lg\CR^k(G_1,\dots,G_t)\leq\BR^k(G_1,\dots,G_t)\leq \CR^k(G_1,\dots,G_t)-1.
\]
\end{prop}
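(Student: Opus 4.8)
The plan is to prove the two inequalities separately, in both cases by transferring a coloring between $B_N$ and $C_M$ and invoking the definitions of $\CR^k$ and $\BR^k$.

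For the right-hand inequality $\BR^k(G_1,\dots,G_t)\leq\CR^k(G_1,\dots,G_t)-1$, write $M=\CR^k(G_1,\dots,G_t)$. Since every maximal chain of $B_{M-1}$ has $M$ elements, it is a copy of $C_M$. Given any $t$-coloring $c$ of the $k$-chains of $B_{M-1}$, fix a full chain $\A=(A_0\subset A_1\subset\cdots\subset A_{M-1})$ in $B_{M-1}$; the restriction of $c$ to the $k$-chains inside $\A$ is a $t$-coloring of the $k$-chains of a copy of $C_M$. By definition of $\CR^k$, this restricted coloring contains a copy of some $G_i$ in color $i$. A copy of a pograph inside $C_M$ is in particular a copy inside $B_{M-1}$ (a weak embedding into the chain composed with the chain's inclusion into $B_{M-1}$ is a weak embedding into $B_{M-1}$, and the images of edges are preserved). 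Hence $c$ itself contains a copy of $G_i$ in color $i$, so $\BR^k(G_1,\dots,G_t)\leq M-1$.

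For the left-hand inequality $\lg\CR^k(G_1,\dots,G_t)\leq\BR^k(G_1,\dots,G_t)$, set $N=\BR^k(G_1,\dots,G_t)$ and let $M=2^N$; it suffices to show $\CR^k(G_1,\dots,G_t)\leq M$, i.e.\ that every $t$-coloring of the $k$-chains of $C_M$ yields a monochromatic copy. The idea is that $C_{2^N}$ ``contains'' $B_N$ in a way that is compatible with colorings of $k$-chains: enumerate the ground set $[2^N]$ and identify the $i$-th element with the $i$-th subset of $[N]$ in a linear extension of the subset-inclusion order on $B_N$; this gives an injection $\varphi:B_N\to C_{2^N}$ that is a weak embedding (if $u\subseteq v$ then $u$ precedes $v$ in the linear extension). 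Given a $t$-coloring $c$ of the $k$-chains of $C_{2^N}$, pull it back along $\varphi$ to a $t$-coloring $c'$ of the $k$-chains of $B_N$: for a $k$-chain $\{v_1<\cdots<v_k\}$ in $B_N$, its image is a $k$-chain in $C_{2^N}$, and we set $c'(v_1\cdots v_k)=c(\varphi(v_1)\cdots\varphi(v_k))$. By definition of $\BR^k$, $c'$ contains a copy of some $G_i$ in color $i$ via a weak embedding $f$ with $f(G_i)\subseteq B_N$ monochromatic; then $\varphi\circ f$ is a weak embedding of $G_i$ into $C_{2^N}$ whose image is monochromatic in color $i$ under $c$. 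Hence $\CR^k(G_1,\dots,G_t)\leq 2^N$, and taking $\lg$ of both sides gives the claim.

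The main point to get right—and the only place requiring any care—is the construction of the weak embedding $\varphi:B_N\to C_{2^N}$ and the verification that both the embedding and the pullback of colorings respect the pograph structure: one must check that $\varphi$ sends $k$-chains to $k$-chains (immediate, since a weak embedding of a totally ordered image preserves comparability) and that composing weak embeddings yields a weak embedding preserving edge images, so that a monochromatic copy upstairs genuinely descends to one downstairs. Neither direction involves any estimation; the content is entirely the bookkeeping that the relevant structures transfer, which is why the bound is stated with $\lg$ on one side (each level added to a chain only ``doubles'' the Boolean lattice it can simulate).
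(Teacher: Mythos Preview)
Your proof is correct and follows essentially the same approach as the paper: for the upper bound you restrict a coloring of $B_{M-1}$ to a maximal chain (a copy of $C_M$), and for the lower bound you pull back a coloring of $C_{2^N}$ along a linear extension $\varphi:B_N\to C_{2^N}$ and push forward the resulting monochromatic copy. The paper's proof is terser but structurally identical.
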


\begin{proof}
Let $N = \CR^k(G_1,\dots,G_t)$.
Observe that the chain $C_N$ is contained in the Boolean lattice $B_{N-1}$, so any $t$-coloring of the $k$-chains in $B_{N-1}$ contains a copy of $G_i$ in the color $i$ for some $i \in [t]$ and hence $\BR^k(G_1,\dots,G_t) \leq N-1$.

Let $n=\BR^k(G_1,\dots,G_t)$ and let $c$ be any $t$-coloring of the $k$-chains of $C_{2^n}$. Fix a linear extension $\pi:B_n\to C_{2^n}$ and for a $k$-chain $A$ in $B_n$, let $c'(A)=c(\pi(A))$. By the definition of $n$, $c'$ must admit a copy of $G_i$ in color $i$ for some $i$; call this copy $H$. As $\pi$ is a linear extension, $\pi(H)$ is a copy of $G_i$ in color $i$ under $c$, so $\CR^1(G_1,\dots,G_t)\leq 2^n$.
%Let $c$ be a $t$-coloring of the $k$-chains in $C_{N-1}$ that avoids copies of $G_i$ in color $i$ for all $i \in [t]$.
%Let $n = \lfloor \lg N-1\rfloor$ and consider a linear extension $\pi : B_n \to C_{2^n}$.
%Thus, we can $t$-color every $k$-chain $A \in B_n$ using the color $c(\pi(A))$, where $\pi(A)$ is a $k$-chain in $C_{2^n} \subseteq C_{N-1}$.
%Since $c$ avoids copies of $G_i$ in color $i$ for all $i \in [t]$, this coloring also avoids these copies in $B_n$.
\end{proof}

Let $G_1,\dots,G_t$ be $k$-uniform pographs.
If every linear extension of $G_i$ is isomorphic for all $i \in [t]$, then observe that $\CR^k(G_1,\dots,G_t) = \OR^k(G_1,\dots,G_t)$; pographs with this property include $\vee_r$, $\wedge_r$, $\diamond_r$, and $\bowtie_r^s$.
When every $G_i$ is totally-ordered, we have another equivalence of partially-ordered Ramsey numbers.

\begin{prop}\label{prop:totallyordered}
If $G_1,\dots,G_t$ are totally-ordered $k$-uniform pographs, then 
\[\BR^k(G_1,\dots,G_t) = \CR^k(G_1,\dots,G_t)- 1 = \OR^k(G_1,\dots,G_t)-1.\]
\end{prop}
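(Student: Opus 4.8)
The plan is to prove the chain of equalities by establishing the two missing links, since Proposition~\ref{prop:chainboundbool} already gives $\BR^k(G_1,\dots,G_t) \leq \CR^k(G_1,\dots,G_t) - 1$, and the equality $\CR^k(G_1,\dots,G_t) = \OR^k(G_1,\dots,G_t)$ for totally-ordered pographs follows from the observation made just before the statement (every linear extension of a totally-ordered pograph is trivially isomorphic to itself). So the real content is the reverse inequality $\BR^k(G_1,\dots,G_t) \geq \OR^k(G_1,\dots,G_t) - 1$, equivalently, exhibiting a $t$-coloring of the $k$-chains of $B_{N-2}$ with no monochromatic $G_i$ in color $i$, where $N = \OR^k(G_1,\dots,G_t)$.

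First I would fix, by definition of $N = \OR^k(G_1,\dots,G_t)$, a $t$-coloring $c_0$ of the $k$-subsets of $[N-1]$ that avoids an ordered copy of $G_i$ in color $i$ for every $i$; here I am identifying the ordered complete $k$-uniform graph on $\{1,\dots,N-1\}$ with all $k$-subsets of $[N-1]$. The key idea is to pull this coloring back to $B_{N-2}$ via the \emph{rank} (cardinality) function: a $k$-chain in $B_{N-2}$ is a sequence $A_1 \subsetneq A_2 \subsetneq \cdots \subsetneq A_k$ of subsets of $[N-2]$, and the multiset of cardinalities $\{|A_1|, \dots, |A_k|\}$ consists of $k$ \emph{distinct} integers lying in $\{0, 1, \dots, N-2\}$, hence forms a genuine $k$-subset of $\{0,1,\dots,N-2\}$, which I identify with a $k$-subset of $[N-1]$ by shifting by $1$. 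Define $c$ on the $k$-chains of $B_{N-2}$ by $c(A_1 \subsetneq \cdots \subsetneq A_k) = c_0(\{|A_1|+1, \dots, |A_k|+1\})$.

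Next I would check that $c$ has no monochromatic copy of $G_i$ in color $i$. Suppose $f$ is a weak embedding of the poset underlying $G_i$ into $B_{N-2}$ such that $f$ maps $G_i$'s edges to $c$-color-$i$ $k$-chains. Since $G_i$ is totally ordered, its vertices $v_1 < v_2 < \cdots < v_m$ satisfy $f(v_1) \subsetneq f(v_2) \subsetneq \cdots \subsetneq f(v_m)$ in $B_{N-2}$ (weak embedding preserves order, and distinctness plus comparability of all pairs forces a strict chain), so the cardinalities $|f(v_1)| < |f(v_2)| < \cdots < |f(v_m)|$ are strictly increasing. Thus $g(v_j) := |f(v_j)| + 1$ is an order-preserving injection of $V(G_i)$ into $[N-1]$, i.e.\ an ordered embedding, and by construction of $c$ it sends every edge of $G_i$ to a $k$-subset of color $i$ under $c_0$ — producing an ordered copy of $G_i$ in color $i$, contradicting the choice of $c_0$. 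Hence $\BR^k(G_1,\dots,G_t) > N-2$, i.e.\ $\BR^k(G_1,\dots,G_t) \geq N - 1 = \OR^k(G_1,\dots,G_t) - 1$. Combined with the upper bound this pins all three quantities together.

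The only point requiring care — and the one I would write out most carefully — is the claim that a weak embedding of a totally-ordered pograph into the Boolean lattice actually lands on a \emph{strict} chain, so that the rank function is strictly monotone rather than merely monotone along it; this is where total orderedness of the $G_i$ is genuinely used, and it is exactly why the statement fails for pographs with nontrivial antichains. Everything else is bookkeeping: the identification of multisets of ranks with $k$-subsets, and the fact that $c$ is well-defined precisely because distinct elements of a chain have distinct cardinalities.
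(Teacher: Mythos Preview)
Your proposal is correct and follows essentially the same approach as the paper: pull back a bad $t$-coloring of the chain $C_{N-1}$ (equivalently, the ordered complete graph on $[N-1]$) to $B_{N-2}$ via the rank map $A \mapsto |A|+1$, and use the fact that a weak embedding of a totally-ordered pograph into $B_{N-2}$ lands on a strict chain, so that the rank map is injective on its image. The paper's write-up is terser but the idea and the key use of total orderedness are identical.
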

\begin{proof}
The inequality $\BR^k(G_1,\dots,G_t) \leq \CR^k(G_1,\dots,G_t)-1$ follows from Proposition~\ref{prop:chainboundbool}.

Let $N = \CR^k(G_1,\dots,G_t)-1$ and let $c$ be a $t$-coloring of the $k$-chains in $C_N$ that does not contain a copy of $G_i$ in color $i$ for all $i \in [t]$.
Define a map $\rho : B_{N-1} \to C_N$ by $\rho(A) = |A| + 1$; if $A_1 \subset A_2 \subset \cdots\subset A_\ell$ is an $\ell$-chain in $B_{N-1}$, then $(\rho(A_1),\dots,\rho(A_\ell))$ is an $\ell$-chain in $C_N$.
For a $k$-chain $A_1 \subset A_2 \subset \cdots \subset A_k$ in the Boolean lattice $B_{N-1}$, let $c'(A_1,\dots,A_k) = c(\rho(A_1),\rho(A_2),\dots,\rho(A_k))$.
Consider a copy of $G_i$ in $B_{N-1}$. 
Since $G_i$ is totally-ordered, the elements of $G_i$ form a chain in $B_{N-1}$ and thus $\rho$ maps the elements of $G_i$ onto a copy of $G_i$ in $C_{N}$.
Since $c$ avoids $i$-colored copies of $G_i$ in $C_N$, so does $c'$ avoid $i$-colored copies of $G_i$ in $B_{N-1}$.
\end{proof}

The above argument requires that the vertices of a totally-ordered graph occupy distinct levels in any embedding of $G$ into the Boolean lattice.
If $G$ is not totally-ordered, then there is a pair of vertices which are incomparable; these two vertices may occupy the same level in an embedding of $G$ into $B_n$. 
It seems reasonable to expect that if $G$ contains large antichains, then the lower bound in Proposition \ref{prop:chainboundbool} should be closer to the truth.
We find this to be true for a few classes of pographs with large antichains.

\subsection{Matchings}
A natural class of $k$-uniform pographs with large antichains are those were the $k$-chains are completely independent, i.e.\ $k$-uniform matchings. In this case, we find the logarithmic bound on the Boolean Ramsey number is essentially tight and is off by at most $1$ in the 2-uniform case.

\begin{theorem}\label{thm:boolmatch}
Let $m_1\geq\dots\geq m_t$ and let $M_{m_1}^k,\dots,M_{m_t}^k$ be $k$-uniform matchings of size $m_1,\dots,m_t$.
\[
\lg\left(km_1+\sum_{i=2}^t(m_i-1)\right)\leq\BR^k(M_{m_1}^k,\dots,M_{m_t}^k)\leq\left\lceil\lg\left(1+\sum_{i=1}^t(m_i-1)\right)\right\rceil+k-1.
\]
\end{theorem}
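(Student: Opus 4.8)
The plan is to prove the two bounds separately, using the linear-extension machinery of Proposition~\ref{prop:chainboundbool} and Proposition~\ref{prop:totallyordered} as a guide, but exploiting the fact that a matching has no comparabilities at all, so its vertices may be packed tightly into few levels of a Boolean lattice.

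\emph{Upper bound.} First I would set $n=\left\lceil\lg\left(1+\sum_{i=1}^t(m_i-1)\right)\right\rceil+k-1$ and let $N=2^{\,n-k+1}$, so that $B_{n}$ contains a ``block'' of $k$ consecutive middle levels, say levels $j,j+1,\dots,j+k-1$, with the middle level of size at least $N$ — in fact one wants $N\geq k\bigl(1+\sum_{i=1}^t(m_i-1)\bigr)$, which is exactly what the choice of $n$ guarantees after checking $\binom{n}{\lfloor n/2\rfloor}\geq 2^{n-1}/\sqrt{n}\cdots$; more simply, restrict to an interval $[A,A\cup T]$ of height $k-1$ with $|T|=k-1$ sitting on a large antichain, so there are $2^{k-1}$ ``shapes'' of $k$-chains running through it. Given a $t$-coloring $c$ of the $k$-chains of $B_n$, build a fixed collection of $M:=1+\sum_{i=1}^t(m_i-1)$ pairwise-disjoint $k$-chains, each obtained by picking $k$ elements forming a chain inside one such interval over a large antichain of $2^{n-k+1}$ incomparable sets — disjointness is possible precisely because $2^{n-k+1}\geq$ (number of chains needed)$\times k$. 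Each of the $M$ disjoint $k$-chains gets one of $t$ colors; by pigeonhole some color $i$ receives at least $\left\lceil (M-1)/t\right\rceil+1\geq\cdots$ — here I would instead distribute the chains greedily so that color $i$ gets at least $m_i$ of them (this is the standard ``assign $m_i$ slots to color $i$'' pigeonhole, using $\sum (m_i-1)+1$ objects and $t$ colors), yielding $m_i$ disjoint monochromatic $k$-chains in color $i$, i.e.\ a copy of $M_{m_i}^k$. The one subtle point is that disjoint $k$-chains in $B_n$ give a genuine \emph{weak-embedding} copy of the $k$-uniform matching: the matching has no comparabilities to violate, so any injection works, and we are done.

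\emph{Lower bound.} For the lower bound I would show $\BR^k(M_{m_1}^k,\dots,M_{m_t}^k)>\lg\bigl(km_1+\sum_{i=2}^t(m_i-1)\bigr)-1$, equivalently that $B_n$ has a good $t$-coloring whenever $2^{n+1}\le km_1+\sum_{i=2}^t(m_i-1)$, i.e.\ when $B_n$ is small. The cleanest route is via the chain comparison $\BR^k\ge\lg\CR^k$ from Proposition~\ref{prop:chainboundbool}, so it suffices to prove $\CR^k(M_{m_1}^k,\dots,M_{m_t}^k)\ge km_1+\sum_{i=2}^t(m_i-1)+1$; that is, to $t$-color the $k$-chains of $C_{km_1+\sum_{i=2}^t(m_i-1)}$ avoiding a color-$i$ copy of $M_{m_i}^k$ for every $i$. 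Since a copy of $M_{m_i}^k$ in a chain is just $m_i$ pairwise-disjoint $k$-subsets, and $C_N$ has exactly $\lfloor N/k\rfloor$ pairwise-disjoint $k$-chains, I would color greedily from the bottom: use the first $km_1-1$ elements so that no $k$-subset among them... — more precisely, reserve the bottom $k\,m_1-1$ elements and color all $k$-chains meeting that block with color $1$ (these admit at most $m_1-1$ disjoint $k$-subsets, hence no $M_{m_1}^k$), then break the remaining $\sum_{i=2}^t(m_i-1)$ elements into consecutive blocks of sizes $m_2-1,\dots,m_t-1$ and color chains touching block $i$ with color $i$; a $k$-chain not yet colored must then straddle, and one routes leftover chains to colors with slack, checking each color class contains at most $m_i-1$ disjoint $k$-chains. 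This last verification — that the greedy assignment really caps each color's matching number — is the main obstacle, and it is exactly the place where the factor $k$ on $m_1$ (rather than $m_i$) enters: a single block of $k\,m_1-1$ chain-elements is what is needed to block $m_1$ disjoint $k$-chains, whereas for the ``leftover'' colors one extra unit per color suffices because their chains can be forced to overlap the size-$(m_i-1)$ blocks. Combining, $\CR^k\ge km_1+\sum_{i\ge2}(m_i-1)+1$, and taking $\lg$ of both sides with Proposition~\ref{prop:chainboundbool} gives the stated lower bound.

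I expect the upper-bound disjointness packing and the lower-bound greedy bookkeeping to be the only nontrivial parts; both are elementary once one observes that for matchings the weak-embedding condition is vacuous, so everything reduces to counting pairwise-disjoint $k$-chains inside an interval of height $k-1$ over a large antichain (upper bound) and inside a single chain (lower bound).
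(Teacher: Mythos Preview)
Your upper bound is the paper's argument in different clothing. The paper takes $N=\lceil\lg(1+\sum_i(m_i-1))\rceil$, lets $X$ be the family of all subsets of $[N]$ inside $B_{N+k-1}$, and for each $A\in X$ forms the $k$-chain $\operatorname{ext}(A)=(A,\,A\cup\{N+1\},\dots,\,A\cup[N+1,N+k-1])$. These $2^N\ge 1+\sum_i(m_i-1)$ chains are automatically pairwise disjoint, since every element of $\operatorname{ext}(A)$ has intersection exactly $A$ with $[N]$; your ``$\times k$'' in the disjointness count is therefore spurious (though harmless). Pigeonhole on the colors of these chains then gives a color $i$ with at least $m_i$ of them, exactly as you say. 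Your interval-over-an-antichain picture is this same construction once one takes the antichain to be all of $2^{[N]}$ and $T=[N+1,N+k-1]$.

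For the lower bound both you and the paper reduce to $\CR^k$ via Proposition~\ref{prop:chainboundbool}, but the paper does not build a coloring: it observes that $\CR^k(M_{m_1}^k,\dots,M_{m_t}^k)=\R^k(M_{m_1}^k,\dots,M_{m_t}^k)$ (any unordered copy of a matching is already a pograph copy, there being no cross-edge comparabilities to respect) and quotes the exact value $km_1+\sum_{i\ge 2}(m_i-1)$ from Alon--Frankl--Lov\'asz. Your direct construction, by contrast, has a genuine error. Coloring every $k$-chain that \emph{meets} the bottom block $B_1=[1,km_1-1]$ with color~1 does not cap the color-1 matching number at $m_1-1$: such a chain may use a single vertex of $B_1$ and draw its other $k-1$ vertices from the later blocks, and whenever $\sum_{i\ge 2}(m_i-1)\ge (k-1)m_1$ one packs $m_1$ disjoint chains of this kind. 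The correct (and standard) construction colors each $k$-chain by the block containing its \emph{maximum} element: then color~1 consists only of chains lying entirely inside $B_1$ (at most $m_1-1$ disjoint ones), while for $i\ge 2$ any collection of disjoint color-$i$ chains has distinct top vertices in $B_i$ and hence size at most $|B_i|=m_i-1$. With that fix your route works; as written, the parenthetical ``these admit at most $m_1-1$ disjoint $k$-subsets'' is false.
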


\begin{proof}
\textit{Lower bound.}
Observe $\CR^k(M_{m_1}^k,\dots,M_{m_t}^k)=\R^k(M_{m_1}^k,\dots,M_{m_t}^k)$ as every copy of an unordered matching can be considered a linear extension of a partially-ordered matching as there are no additional relations between the elements in different edges.
Alon,  Frankl, and Lov\'asz \cite{AFL86} demonstrated that if $m_1\geq\dots\geq m_t$, then $\R^k(M_{m_1}^k,\dots,M_{m_t}^k)=km_1+\sum_{i=1}^t(m_i-1)$.
Apply Proposition \ref{prop:chainboundbool} to complete the lower bound.

\def\ext{\operatorname{ext}}
\textit{Upper bound.}
Let $N=\left\lceil\lg\left(1+\sum_{i=1}^t(m_i-1)\right)\right\rceil$ and let $c$ be a $t$-coloring of the $k$-chains in $B_{N+k-1}$.
Let $X$ be the family of subsets of $[N]$ within $B_{N+k-1}$.
For every set $A \in X$, define the \emph{extension} of $A$ to be the $k$-chain $\ext(A) = (A, A\cup \{N+1\}, A \cup \{N+2\},\dots, A \cup \{N+1,\dots,N+k-1\})$.
For $i \in [t]$, define the set $T_i$ to be the sets $A \in X$ where $c(\ext(A)) = i$.
Since $|X| = 2^N \geq 1 + \sum_{i=1}^k(m_i-1)$, the pigeonhole principle implies that $|T_i| \geq m_i$ for some $i$.
The collection of $k$-chains $\ext(A)$ for $A \in T_i$ form an $i$-colored matching of size at least $m_i$.
\end{proof}

Matchings are usually much simpler than other graphs.
Indeed, we limit our focus to 2-uniform pographs for the remainder of this section.

\subsection{Cups and Caps}

We now focus on  the Boolean Ramsey numbers of $r$-caps and $r$-cups.
To begin, the following proposition follows directly from the pigeonhole principle by considering all $r_i$-cups with minimum element $\varnothing$ or all $r_i$-caps with maximum element $[N]$.

\begin{prop}\label{prop:samecup}
For positive integers $r_1,\dots,r_t$,
\[
\BR^2(\vee_{r_1},\dots,\vee_{r_t}) =\BR^2(\wedge_{r_1},\dots,\wedge_{r_t}) = \left\lceil \lg\left(2 + \sum_{i=1}^t (r_i-1)\right)\right\rceil.
\]
\end{prop}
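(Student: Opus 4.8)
The plan is to establish matching upper and lower bounds for $\BR^2(\vee_{r_1},\dots,\vee_{r_t})$, and then observe that the cap version is entirely symmetric (replace each set by its complement, which reverses the Boolean lattice and sends $r$-cups to $r$-caps). Write $N = \left\lceil \lg\left(2 + \sum_{i=1}^t(r_i-1)\right)\right\rceil$; equivalently, $N$ is the least integer with $2^N \geq 2 + \sum_{i=1}^t(r_i-1)$, i.e.\ $2^N - 1 \geq 1 + \sum_{i=1}^t(r_i-1)$.

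For the upper bound, let $c$ be any $t$-coloring of the $2$-chains (comparable pairs) of $B_N$. Look at the $2^N - 1$ comparable pairs of the form $(\varnothing, A)$ with $A \neq \varnothing$. Each such pair receives a color in $[t]$, so by pigeonhole some color $i$ receives at least $\left\lceil (2^N-1)/t \right\rceil$ of them — but more sharply, if for every $i$ we had at most $r_i - 1$ pairs $(\varnothing, A)$ in color $i$, then the total number of nonempty $A$ would be at most $\sum_{i=1}^t(r_i-1) < 2^N - 1$, a contradiction. Hence some color $i$ has at least $r_i$ sets $A_1,\dots,A_{r_i}$ with $c(\varnothing A_j) = i$ for all $j$; these, together with $\varnothing$ as the bottom element, form an $i$-colored copy of $\vee_{r_i}$. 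So $\BR^2(\vee_{r_1},\dots,\vee_{r_t}) \leq N$.

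For the lower bound, I need a $t$-coloring of the $2$-chains of $B_{N-1}$ with no $i$-colored $\vee_{r_i}$. Since $N$ is minimal, $2^{N-1} < 2 + \sum_{i=1}^t(r_i-1)$, i.e.\ $2^{N-1} - 1 \leq \sum_{i=1}^t(r_i-1)$, so we can partition the $2^{N-1} - 1$ nonempty subsets of $[N-1]$ into classes $\mathcal{A}_1,\dots,\mathcal{A}_t$ with $|\mathcal{A}_i| \leq r_i - 1$. The idea is to color a comparable pair $u < v$ by looking at the smaller endpoint: if $u \neq \varnothing$, assign a fixed color (say color $1$) — wait, this needs care, since a copy of $\vee_{r_i}$ can have any bottom element, not just $\varnothing$. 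The cleaner construction: for $u < v$, set $c(uv) = i$ where $i$ is the unique index with $v \setminus u$, viewed appropriately... Actually the right move is to color by the \emph{top} element's relationship, or to use the following: since any $\vee_r$ has a bottom element $x$ and $r$ elements above it, to avoid an $i$-colored $\vee_{r_i}$ it suffices that every element $x \in B_{N-1}$ has $|\up_i(x)| \leq r_i - 1$. So I want to color pairs so that each up-set in each color is small; a direct way is $c(uv) = i$ iff $v \in \mathcal{A}_i'$ for a partition $\{\mathcal{A}_i'\}$ of $B_{N-1} \setminus \{\varnothing\}$ depending only on $v$, which gives $|\up_i(x)| \leq |\mathcal{A}_i'| \leq r_i - 1$ for all $x$ simultaneously. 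This works since $B_{N-1}$ has $2^{N-1}$ elements, one of which is $\varnothing$, leaving $2^{N-1} - 1 \leq \sum(r_i - 1)$ to distribute. The main obstacle is getting the off-by-one bookkeeping exactly right in the lower bound (deciding whether $\varnothing$, or $[N-1]$, or neither gets special treatment) so that the partition sizes come out to $r_i - 1$ rather than $r_i$ or $r_i - 2$; once the coloring-by-top-element trick is in place this is routine, and the cap case follows by the complementation duality with no extra work.
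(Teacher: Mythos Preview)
Your argument is correct and matches the paper's approach exactly: the paper's proof is just the one-line remark that the result ``follows directly from the pigeonhole principle by considering all $r_i$-cups with minimum element $\varnothing$,'' and you have simply written out the details of that pigeonhole count together with the matching lower-bound coloring (color each pair $u<v$ by the class of its top element $v$). The exploratory detours in your write-up can be deleted, but the final construction and the off-by-one bookkeeping are right.
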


While the Boolean Ramsey number was simple to compute when considering a collection of cups or a collection of caps, the Ramsey numbers become more complicated when considering a collection of both cups and caps.
This next proposition states that knowing the $2$-color partially-ordered Ramsey number for cup verses cap is sufficient to determine the multicolor Ramsey number.

\begin{prop}\label{prop:multicupcap}
Let $R=1+\sum_{i=1}^n(r_i-1)$ and $S=1+\sum_{i=1}^m(s_i-1)$.
\begin{align*}
\CR^2(\vee_{r_1},\dots,\vee_{r_n},\wedge_{s_1},\dots,\wedge_{s_m}) &=\CR^2(\vee_R,\wedge_S),\quad \text{and}\\
\BR^2(\vee_{r_1},\dots,\vee_{r_n},\wedge_{s_1},\dots,\wedge_{s_m}) &=\BR^2(\vee_R,\wedge_S).
\end{align*}
\end{prop}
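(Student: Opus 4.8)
The plan is to prove the two identities by establishing inequalities in both directions, reducing a collection of cups and caps to a single cup and a single cap. The key structural fact is that cups and caps interact with colorings only through the color classes restricted to ``fans'' emanating from a common vertex, so the multicolor version should collapse to the two-color version by a pigeonhole argument nearly identical to Proposition~\ref{prop:samecup}.

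First I would handle the easy direction, say for the chain version: $\CR^2(\vee_{r_1},\dots,\vee_{r_n},\wedge_{s_1},\dots,\wedge_{s_m}) \geq \CR^2(\vee_R,\wedge_S)$. Given an optimal 2-coloring $c$ of the 2-chains of $C_{N}$ (with $N = \CR^2(\vee_R,\wedge_S)-1$) avoiding a monochromatic $\vee_R$ in color 1 and $\wedge_S$ in color 2, I reinterpret color 1 as a "cup color" and color 2 as a "cap color," and I claim the same coloring avoids $\vee_{r_i}$ in color $i$ for $i \le n$ and $\wedge_{s_j}$ in color $n+j$ for $j \le m$ under a suitable identification. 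Concretely, a copy of $\vee_{r_i}$ in any cup-color among colors $1,\dots,n$ would, if the first $n$ colors were all merged into color $1$, give $r_i$ up-edges from a common bottom vertex, and if all $n$ such copies could coexist at the same vertex we would get a $\vee_R$; but the subtlety is that they need not share a bottom vertex. The correct reduction is the reverse: take an optimal coloring $c$ with $n$ cup-colors and $m$ cap-colors on $C_{N'}$, and \emph{merge} the $n$ cup-colors into one and the $m$ cap-colors into one. A merged copy of $\vee_R$ means some vertex $x$ has $R$ up-neighbors in cup-colors; by pigeonhole on the $n$ cup-colors (using $R - 1 = \sum (r_i-1)$), some cup-color $i$ has $\ge r_i$ up-neighbors of $x$, a $\vee_{r_i}$. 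Hence merging preserves the property of avoiding all the small cups/caps, so $\CR^2(\vee_R,\wedge_S) \le \CR^2(\vee_{r_1},\dots,\wedge_{s_m})$, i.e.\ the "$\ge$" direction of the identity as stated.

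For the reverse inequality $\CR^2(\vee_{r_1},\dots,\wedge_{s_m}) \le \CR^2(\vee_R,\wedge_S)$, I would start from an $(n+m)$-coloring of $C_{N}$ with $N = \CR^2(\vee_R,\wedge_S)$ and again merge down to two colors; the merged coloring, by definition of $\CR^2(\vee_R,\wedge_S)$, contains either a monochromatic $\vee_R$ or a monochromatic $\wedge_S$. If it is a $\vee_R$ at vertex $x$, then among the $R$ up-edges of $x$, pigeonhole on the $n$ cup-colors forces $\ge r_i$ of them to share color $i$ for some $i$, yielding $\vee_{r_i}$ in color $i$; symmetrically for a $\wedge_S$. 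This gives both inequalities and hence the chain identity. The Boolean identity $\BR^2(\vee_{r_1},\dots,\wedge_{s_m}) = \BR^2(\vee_R,\wedge_S)$ follows by exactly the same merging argument applied to colorings of $B_N$ instead of $C_N$ --- the merge-and-pigeonhole step does not use the chain structure at all, only that cups and caps are stars.

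The main obstacle I anticipate is being careful about \emph{which} direction of each inequality actually requires the host structure, and making sure the merging is set up so that "avoiding small cups/caps" is the hypothesis that transfers cleanly; in particular one must confirm that a monochromatic cup in a \emph{merged} cup-color genuinely decomposes via pigeonhole into a monochromatic small cup in one of the original colors, which it does since a cup is a star and all its edges touch the apex. There is no interaction between cup-colors and cap-colors to worry about because a monochromatic structure lies in a single merged class. So the argument is essentially two applications of pigeonhole layered on top of the definition of the two-color Ramsey number, and I expect it to be short once the bookkeeping of $R$ and $S$ is pinned down.
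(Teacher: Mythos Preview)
Your second paragraph is correct and matches the paper's argument for the inequality $\CR^2(\vee_{r_1},\dots,\wedge_{s_m}) \le \CR^2(\vee_R,\wedge_S)$: merge the $(n+m)$-coloring down to two colors, find a $\vee_R$ or $\wedge_S$, and pigeonhole back. The problem is that your first paragraph, which you label as the ``$\ge$'' direction, proves the very same inequality a second time. You start from an optimal $(n+m)$-coloring avoiding all small cups and caps, merge, and argue that the merged 2-coloring avoids $\vee_R$ and $\wedge_S$. That is a perfectly valid implication, but the conclusion it yields is that an avoiding $(n+m)$-coloring produces an avoiding 2-coloring, i.e.\ multicolor $\le$ 2-color again---not $\CR^2(\vee_R,\wedge_S) \le \CR^2(\vee_{r_1},\dots,\wedge_{s_m})$ as you wrote. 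So the $\ge$ direction is simply not established.

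The missing idea, which the paper supplies, goes in the direction you initially tried and then abandoned: start from a 2-coloring $c$ of $C_N$ (or $B_N$) avoiding $\vee_R$ in color~1 and $\wedge_S$ in color~2, and \emph{split} it into an $(n+m)$-coloring. Since $c$ avoids $\vee_R$ in color~1, every vertex $v$ satisfies $|\up_1(v)| \le R-1 = \sum_{i=1}^n (r_i-1)$, so one can partition $\up_1(v)$ into blocks $P_1,\dots,P_n$ with $|P_i|\le r_i-1$ and assign color $i$ to the edge $vu$ for $u\in P_i$; symmetrically, partition $\down_2(v)$ into blocks of sizes at most $s_j-1$ and assign colors $n+1,\dots,n+m$. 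The resulting $(n+m)$-coloring avoids $\vee_{r_i}$ in color $i$ and $\wedge_{s_j}$ in color $n+j$ by construction, giving the $\ge$ inequality. Your instinct to start from the 2-coloring was right; what was needed was not ``reinterpretation'' but this explicit partitioning of each colored up-set and down-set.
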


\begin{proof}
We prove equality by demonstrating both inequalities. 

($\leq$)
Consider an $(n+m)$-coloring $c$ of the edges of either a chain $C_N$ or a Boolean lattice $B_N$.
Let $c'$ be a 2-coloring where $c'(e) = 1$ if $c(e) \leq n$ and $c'(e) = 2$ if $c(e) > n$.
If $c$ avoids $i$-colored copies of $\vee_{r_i}$ and $(n+j)$-colored copies of $\wedge_{s_j}$, then $c'$ avoids 1-colored copies of $\vee_R$ and 2-colored copies of $\wedge_S$.
%If $c'$ contains a 1-colored copy of $\vee_R$, then $c$ contains an $i$-colored copy of $\vee_{r_i}$ for some $i$.
%If $c'$ contains a 2-colored copy of $\wedge_S$, then $c$ contains an $(n+j)$-colored copy of $\wedge_{s_j}$ for some $j$.

($\geq$)
Let $c$ be a 2-coloring  of the edges of either a chain $C_N$ or a Boolean lattice $B_N$ and suppose that $c$ does not contain a copy of $\vee_R$ in color 1 or a copy of $\wedge_S$ in color 2.
We will construct an $(n+m)$-coloring $c$.
%For an element $v$, let $\up_1(v)$ be the elements $u$ such that $v < u$ and $c'(vu) = 1$; let $\down_2(v)$ be the elements $u$ such that $u < v$ and $c'(uv) = 2$.
Since $c$ does not contain a 1-colored copy of $\vee_R$, we have $|\up_1(v)| < R$; partition $\up_1(v)$ into $n$ parts $P_1 \cup \dots \cup P_n$ such that $|P_i| \leq r_i-1$ and let $c'(vu) = i$ if $u \in P_i$.
Since $c$ does not contain a 2-colored copy of $\wedge_S$, we have $|\down_2(v)| < S$; partition $\down_2(v)$ into $m$ parts $P_1 \cup \dots \cup P_m$ such that $|P_j| \leq s_j-1$ and let $c'(vu) = n+j$ if $u \in P_j$.
Every edge is colored exactly once by the process above and hence $c'$ avoids $i$-colored copies of $\vee_{r_i}$ and $(n+j)$-colored copies of $\wedge_{s_j}$.
\end{proof}

Choudum and Ponnusamy~\cite{CP02} determined $\CR^2(\vee_r,\wedge_s)$ exactly.

\begin{theorem}[Choudum and Ponnusamy~\cite{CP02}]\label{thm:cpcapcup}
For integers $r,s\geq 2$,
\[
\CR^2(\vee_r,\wedge_s)=\left\lfloor {\sqrt{1+8(r-1)(s-1)}-1\over 2}\right\rfloor+r+s.
\]
\end{theorem}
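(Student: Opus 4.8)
The plan is to prove the two inequalities separately, matching the familiar Erdős–Szekeres cup-cap pattern. First I would establish the lower bound $\CR^2(\vee_r,\wedge_s) \geq k + r + s - 1$ where $k = \left\lfloor (\sqrt{1+8(r-1)(s-1)}-1)/2 \right\rfloor$, by exhibiting an explicit $2$-coloring of the edges of a chain $C_N$ with $N = k + r + s - 2$ that contains no $1$-colored $\vee_r$ and no $2$-colored $\wedge_s$. The natural construction assigns to each vertex $v$ (identified with its position $1,\dots,N$) a pair of statistics: roughly, the length of the longest $1$-colored cup ending at $v$ and the length of the longest $2$-colored cap ending at $v$, and colors an edge by comparing these. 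The key numerical fact is that the number of achievable statistic-pairs $(a,b)$ with $1 \leq a \leq r-1$, $1 \leq b \leq s-1$ that a monotone coloring can realize along a chain is controlled by a triangular quantity, and one checks that $k$ is exactly the largest extra length for which a valid labeling of all $N$ vertices exists; the quantity $\binom{k+1}{2} \leq (r-1)(s-1)$, equivalently $k \leq (\sqrt{1+8(r-1)(s-1)}-1)/2$, is where the floor expression comes from.

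For the upper bound $\CR^2(\vee_r,\wedge_s) \leq k + r + s$, I would take a $2$-coloring $c$ of the edges of $C_N$ with $N = k+r+s$ and assume it avoids a $1$-colored $\vee_r$ and a $2$-colored $\wedge_s$. To each vertex $v$ assign $f(v) = (a(v), b(v))$ where $a(v)$ is the size of the largest $1$-colored up-set–style cup with apex-set structure anchored appropriately (more precisely, the length of the longest chain $v = v_0 < v_1 < \cdots$ with all edges $v_0 v_j$ colored $1$), and $b(v)$ the analogous $2$-colored cap parameter looking downward. Avoiding $\vee_r$ in color $1$ forces $a(v) \leq r-1$, and avoiding $\wedge_s$ in color $2$ forces $b(v) \leq s-1$. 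The crucial monotonicity claim is: if $u < v$ then $f(u) \neq f(v)$, and in fact the pairs along the chain move "monotonically" in a way that, combined with a bookkeeping argument on how $a$ and $b$ can change across a single edge (one of them must strictly increase depending on the color of that edge), limits the chain length to $k + r + s - 1$ before a repeat is forced — giving the contradiction at $N = k+r+s$. The count of available labels is again the triangular bound, and $k$ is chosen maximal subject to it.

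I expect the main obstacle to be getting the label-counting argument exactly right so that the floor in the statement emerges precisely, rather than off by one. The subtlety is that a single edge $uv$ of color $1$ forces $a(v) > a(u)$ but may leave $b$ unconstrained (and symmetrically for color $2$), so the pair $f(v)$ does not simply lexicographically increase; one needs the sharper observation that consecutive labels differ in a controlled coordinate and that the total number of distinct labels encountered is at most $(r-1) + (s-1) + (\text{number of "diagonal" steps})$, with the diagonal steps bounded by $k$ via $\binom{k+1}{2} \leq (r-1)(s-1)$. Matching both bounds to the closed form then reduces to verifying that $\left\lfloor (\sqrt{1+8(r-1)(s-1)}-1)/2 \right\rfloor$ is the largest integer $k$ with $k(k+1)/2 \leq (r-1)(s-1)$, which is a routine manipulation of the quadratic formula. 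Since the theorem is attributed to Choudum and Ponnusamy, I would cite \cite{CP02} for the full details and present only this sketch, noting that by Proposition~\ref{prop:totallyordered}-style reasoning the chain Ramsey number here coincides with the directed/ordered Ramsey number for these totally-ordered-extension pographs.
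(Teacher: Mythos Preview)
The paper does not prove this theorem at all: it is stated as a result of Choudum and Ponnusamy and simply cited to \cite{CP02}. In that sense your final plan---to cite \cite{CP02} for the details---is exactly what the paper does, and nothing more is required.

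That said, the sketch you offer for the upper bound has a genuine gap. With $a(v)=|\up_1(v)|$ and $b(v)=|\down_2(v)|$ as you define them, the ``crucial monotonicity claim'' that $u<v$ implies $f(u)\neq f(v)$ is false, and in particular your assertion that ``a single edge $uv$ of color $1$ forces $a(v)>a(u)$'' does not hold. A four-point counterexample: on $C_4$ with colors $c(12)=c(23)=c(34)=1$ and $c(13)=c(14)=c(24)=2$, one has $\up_1(1)=\{2\}$, $\up_1(2)=\{3\}$, $\down_2(1)=\down_2(2)=\varnothing$, so $f(1)=f(2)=(1,0)$ despite $c(12)=1$. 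The Erd\H{o}s--Szekeres labeling works because increasing/decreasing runs are \emph{paths} and can be extended through an edge; here $\vee_r$ and $\wedge_s$ are \emph{stars}, and knowing $v\in\up_1(u)$ gives no control over $|\up_1(v)|$ relative to $|\up_1(u)|$. The actual argument in \cite{CP02} proceeds by a sharper edge-counting/degree-sequence analysis rather than a pigeonhole on vertex labels, which is how the quadratic in the floor arises. If you want to include a sketch, it should follow that counting route; otherwise, matching the paper and citing \cite{CP02} is the right call.
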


Observe that this implies $\CR^2(\vee_r,\wedge_s) \leq (1+\sqrt{2})(r+s)$.
Therefore, by applying Proposition \ref{prop:multicupcap}, we see that 
\[
\CR^2(\vee_{r_1},\dots,\vee_{r_n},\wedge_{s_1},\dots,\wedge_{s_m})=\left\lfloor{\sqrt{1+8(R-1)(S-1)}-1\over 2}\right\rfloor +R+S \leq (1+\sqrt{2})(R+S),
\]
where $R=1+\sum_{i=1}^n(r_i-1)$ and $S=1+\sum_{i=1}^m(s_i-1)$.

In contrast to the linear bound of chain Ramsey numbers, the following theorem shows that the Boolean Ramsey numbers for cups and caps is logarithmic.

\begin{theorem}\label{thm:boolcupcap}
For integers $r,s\geq 2$, 
\[
\lg\left(\left\lfloor {\sqrt{1+8(r-1)(s-1)}-1\over 2}\right\rfloor+r+s\right)\leq\BR^2(\vee_r,\wedge_s)\leq \left\lceil\log_{3/2}(r+s-1)\right\rceil.
\]
\end{theorem}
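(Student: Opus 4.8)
The lower bound is immediate from Proposition~\ref{prop:chainboundbool} together with the exact value of $\CR^2(\vee_r,\wedge_s)$ from Theorem~\ref{thm:cpcapcup}, so the entire content is the upper bound. The plan is to show that if $N = \lceil\log_{3/2}(r+s-1)\rceil$, then every $2$-coloring $c$ of the comparability edges of $B_N$ contains a $1$-colored $\vee_r$ or a $2$-colored $\wedge_s$. I would argue by a weight/counting scheme on full chains: think of a uniformly random maximal chain $\A = (\varnothing = A_0 \subset A_1 \subset \cdots \subset A_N = [N])$, and track, along this chain, how the colored up-sets and down-sets grow. The key local fact is that if $c$ avoids a $1$-colored $\vee_r$ then every vertex $v$ has $|\up_1(v)| \le r-1$, and if it avoids a $2$-colored $\wedge_s$ then every vertex has $|\down_2(v)| \le s-1$; I want to leverage these two bounds simultaneously against the fact that the Boolean lattice has ``branching'' at every level.

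The cleanest route I can see is an averaging argument in the spirit of the Lubell-function proofs earlier in the paper, but adapted to the $2$-uniform setting. Consider the $(r+s-1)$ ``forbidden'' configurations and count, over all maximal chains, incidences with monochromatic stars. Concretely: for a maximal chain $\A$, look at the edges $A_iA_{i+1}$ of the Hasse path together with ``shortcut'' edges $A_iA_j$; if $c(A_iA_{i+1})=1$ for too many consecutive $i$, combined with the branching out of $A_i$, we force a $1$-colored $\vee_r$; symmetrically for color $2$ and $\wedge_s$. The factor $3/2$ should come out of a recursion: in $B_N$, fix the top element $[N]$; the edges from $[N]$ down to the co-atoms are each colored, so either at least $\lceil N/(r+s-1)\cdot\text{something}\rceil$... actually I think the right recursion is to split $B_N$ by whether element $N$ is present, i.e.\ $B_N = B_{N-1}^{(0)} \sqcup B_{N-1}^{(1)}$ with a perfect matching of edges between them, and to set up a function $f(r,s)$ with $f(r,s) \le 1 + f(\lceil 2r/3\rceil, \lceil 2s/3\rceil)$ or similar, whose solution is $\log_{3/2}(r+s-1)$.

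Here is the recursion I would actually try to establish. Claim: $\BR^2(\vee_r,\wedge_s) \le 1 + \BR^2(\vee_{r'},\wedge_{s'})$ whenever $r'+s'-1 \ge \tfrac23(r+s-1)$, i.e.\ we may shrink the total ``budget'' $r+s-1$ by a factor $2/3$ at the cost of one dimension. To see this, take a $2$-coloring $c$ of $B_N$ and consider the element $[N]$ (the top). Its $N$ lower covers $[N]\setminus\{j\}$ each give an edge to $[N]$. If $s$ or more of these edges have color $2$, together with $[N]$ they form a $2$-colored $\wedge_s$ and we are done. Similarly look at $\varnothing$: if $r$ or more of the edges from $\varnothing$ to the atoms have color $1$, we get a $1$-colored $\vee_r$. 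This alone is not quite the recursion; the honest version restricts to a sub-Boolean-lattice $B_{N-1}$ (sets avoiding some coordinate) and argues that within it, after accounting for one ``used up'' color class near the top or bottom, the residual problem is $\BR^2(\vee_{r'},\wedge_{s'})$ for a reduced budget. Unrolling, after $N$ steps the budget $r+s-1$ has been multiplied down past $1$, which happens once $N \ge \log_{3/2}(r+s-1)$.

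The main obstacle is making the recursion genuinely clean, since the naive ``peel off the top'' step loses information about which color was consumed and may not reduce \emph{both} $r$ and $s$ proportionally. I would handle this by working with the symmetric quantity $r+s-1$ directly: define $g(N)$ to be the largest $R$ such that $B_N$ admits a $2$-coloring with no $1$-colored $\vee_r$ and no $2$-colored $\wedge_s$ for \emph{any} $r,s$ with $r+s-1 = R$ (equivalently, with $|\up_1(v)| \le r-1$ and $|\down_2(v)| \le s-1$ at every $v$ for the optimal split). The local constraint at each vertex is then $|\up_1(v)| + |\down_2(v)| \le R-1$, and one shows via the structure of $B_N$ — picking the chain through the middle and using that a random lower cover versus random upper cover spreads the two colored sets — that $g(N) \le \lceil \tfrac32 g(N-1)\rceil$ with $g(0) = 1$, giving $g(N) < (3/2)^N + O(1)$ and hence the stated bound. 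If that precise constant proves stubborn, the fallback is to prove the weaker $g(N) \le \tfrac32 g(N-1) + O(1)$, which still yields $\lceil\log_{3/2}(r+s-1)\rceil + O(1)$ and matches the claimed order of growth.
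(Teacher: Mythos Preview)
Your lower bound is correct and matches the paper exactly. The upper bound, however, has a genuine gap: you never actually establish the recursion. You sketch two versions---one on $\BR^2(\vee_r,\wedge_s)$ directly and one via the auxiliary quantity $g(N)$---but in each case the crucial step (shrinking the budget $r+s-1$ by a factor $2/3$ when passing from $B_N$ to $B_{N-1}$) is asserted, not proved. Restricting a good coloring of $B_N$ to a sublattice $B_{N-1}$ yields a good coloring with the \emph{same} parameters $(r,s)$, not smaller ones; your observations about atoms and co-atoms concern only cover edges, while the constraints $|\up_1(v)|\le r-1$ and $|\down_2(v)|\le s-1$ involve all comparable pairs, so no reduction is forced. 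Your definition of $g(N)$ is also ill-posed: if ``for any $r,s$'' means \emph{some} pair with $r+s-1=R$, then $g(N)=\infty$ trivially (enlarging $r$ or $s$ only weakens the constraint); if it means \emph{every} such pair simultaneously, then taking $r=2$ forces $|\up_1(v)|\le 1$ everywhere, which is far too strong. And your stated fallback only gives the bound up to an additive $O(1)$, which does not prove the theorem as written.

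The paper does not recurse at all; it double-counts. Assume a coloring of $B_N$ with no $1$-colored $\vee_r$ and no $2$-colored $\wedge_s$, set $W=B_N\setminus\{[N]\}$, and count color-$1$ edges with both ends in $W$ two ways. Since $|\down_2(v)|\le s-1$, each $v\in W$ has $|\down_1(v)|\ge 2^{|v|}-s$; summing gives at least $3^N-2^N(s+1)+s$. For the upper count, let $T\subseteq W$ be those $v$ with $|\up_1(v)\cap W|=r-1$. Each $v\in T$ must have $c(v,[N])=2$ (otherwise $|\up_1(v)|\ge r$), so $|T|\le|\down_2([N])|\le s-1$; every other $v\in W$ has $|\up_1(v)\cap W|\le r-2$. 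Summing gives at most $(s-1)+(2^N-1)(r-2)$. Comparing the two bounds yields $(3/2)^N<r+s-1$, contradicting $N=\lceil\log_{3/2}(r+s-1)\rceil$. The constant $3/2$ comes from the identity $\sum_{v\in B_N}2^{|v|}=3^N$, not from an inductive halving; the ``saturated set'' $T$ is the idea your sketch is missing.
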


\begin{proof}
\textit{Lower Bound.} 
The lower bound follows from Theorem \ref{thm:cpcapcup} and applying Proposition \ref{prop:chainboundbool}.

\textit{Upper Bound.} 
Let $N=\left\lceil\log_{3/2}(r+s-1)\right\rceil$ and suppose that $c$ is a $2$-coloring of the edges of $B_N$ that avoids copies of $\vee_r$ in color $1$ and avoids copies of $\wedge_s$ in color $2$.
Thus, for any $v\in B_N$, $|\up_1(v)|\leq r-1$ and $|\down_2(v)|\leq s-1$.
In particular, this implies that $|\down_1(v)|=|\down(v)|-1-|\down_2(v)|\geq 2^{|v|}-s$. 

Let $W=B_N\setminus\{[N]\}$ and let $T$ be the set of elements $v$ in $W$ where $|\up_1(v)\cap W| = r-1$.
As $c$ avoids copies of $\vee_r$ in color $1$, for any $v\in T$, $c(v,[N])=2$.
Hence, $|T|\leq s-1$ since $c$ avoids copies of $\wedge_s$ in color $2$.

Let $b$ be the number of edges $uv$ with $c(uv) = 1$ and both $u$ and $v$ are in $W$, then
\[
b = \sum_{v\in W}|\down_1(v)|
 \geq \sum_{v\in W}(2^{|v|}-s)
 = \sum_{i=0}^{n-1}{N\choose i}2^i-s(2^N-1)
 = 3^N-2^N(s+1)+s.
 \]

On the other hand,
\begin{align*}
b &= \sum_{v\in W}|\up_1(v)\cap W|\\
 &=\sum_{v\in T}(r-1)+\sum_{v\in W\setminus T}|\up_1(v)\cap W|\\
 &\leq  |T|(r-1)+(2^N-1-|T|)(r-2)\\
 &= |T|+(2^N-1)(r-2)\\
 &\leq s-1+(2^N-1)(r-2).
\end{align*}

Therefore, $3^N-2^N(s+1)+s\leq b\leq s-1+(2^N-1)(r-2)$, so
\[
\left({3\over 2}\right)^N\leq r+s-1-(r-1)2^{-N}<r+s-1.
\]
This, however, is a contradiction as $N=\left\lceil\log_{3/2}(r+s-1)\right\rceil$.
\end{proof}

By applying Proposition \ref{prop:multicupcap}, observe that 
\[
\lg\left(\left\lfloor {\sqrt{1+8(R-1)(S-1)}-1\over 2}\right\rfloor+R+S\right) \leq\BR^2(\vee_{r_1},\dots,\vee_{r_n},\wedge_{s_1},\dots,\wedge_{s_m})\leq \left\lceil\log_{3/2}(R+S-1)\right\rceil, 
\]
where $R=1+\sum_{i=1}^n(r_i-1)$ and $S=1+\sum_{i=1}^m(s_i-1)$.

\subsection{Diamonds}
An $r$-diamond combines the behavior of an $r$-cup with an $r$-cap.
Despite doubling the number of edges in the pograph, we find similar logarithmic behavior in the Boolean Ramsey numbers.
However, our methods focus on the 2-color case and fail to extend to the generic $t$-color case.

Using Theorem \ref{thm:cpcapcup}, Balko, Cibulka, Kr\'{a}l and Kyn\u{c}l~\cite{BCKK13} argued that $11\leq\CR_2^2(\diamond_2)\leq 13$ and show that the lower bound is tight with computer assistance. The following is a direct extension of their argument.

\begin{theorem}\label{thm:chaindiamond}
If $r,s\geq 2$, then
\[
\CR^2(\diamond_r,\diamond_s)\leq 2\cdot\left\lfloor {\sqrt{1+8(r-1)(s-1)}-1\over 2}\right\rfloor+3(r+s)-1
\]
\end{theorem}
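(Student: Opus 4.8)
The plan is to put $K=\CR^2(\vee_r,\wedge_s)=\big\lfloor(\sqrt{1+8(r-1)(s-1)}-1)/2\big\rfloor+r+s$, the value supplied by Theorem~\ref{thm:cpcapcup}, and to show that no $2$-coloring of the $2$-chains of $C_N$ with $N=2K+(r+s)-1$ avoids both a color-$1$ copy of $\diamond_r$ and a color-$2$ copy of $\diamond_s$; since $2K+(r+s)-1$ is exactly the right-hand side of the theorem, this gives the bound. I will use two easy facts about Theorem~\ref{thm:cpcapcup}: the formula is symmetric in $r$ and $s$, and reversing the order of a chain carries a color-$c$ copy of $\vee_a$ to a color-$c$ copy of $\wedge_a$ and conversely while fixing every color, so in fact $\CR^2(\wedge_r,\vee_s)=\CR^2(\vee_r,\wedge_s)=K$.

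Write $C_N$ as $v_1<\cdots<v_N$ and suppose a $2$-coloring $c$ avoids both forbidden diamonds. Label each interior vertex $v\in\{v_2,\dots,v_{N-1}\}$ by the pair $(c(v_1v),c(vv_N))\in\{1,2\}^2$; this splits the $N-2$ interior vertices into classes $W_{11},W_{12},W_{21},W_{22}$. If $|W_{11}|\ge r$ then any $r$ of its elements, together with $v_1$ below and $v_N$ above, form a color-$1$ copy of $\diamond_r$, so $|W_{11}|\le r-1$; symmetrically $|W_{22}|\le s-1$. For the sub-chain $W_{12}$ I claim $c$ restricted to it has no color-$1$ copy of $\wedge_r$ and no color-$2$ copy of $\vee_s$: a color-$1$ $\wedge_r$ in $W_{12}$ is a vertex $b\in W_{12}$ with color-$1$ down-neighbors $u_1,\dots,u_r\in W_{12}$, and because each $u_i\in W_{12}$ has $c(v_1u_i)=1$, the vertices $v_1,u_1,\dots,u_r,b$ form a color-$1$ copy of $\diamond_r$; dually, a color-$2$ $\vee_s$ in $W_{12}$ has all of its leaves joined to $v_N$ in color $2$ and so completes to a color-$2$ copy of $\diamond_s$. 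Hence $|W_{12}|<\CR^2(\wedge_r,\vee_s)=K$, and the mirror-image argument (edges from $W_{21}$ to $v_1$ have color $2$, edges to $v_N$ have color $1$) shows $c$ on $W_{21}$ has no color-$1$ $\vee_r$ and no color-$2$ $\wedge_s$, so $|W_{21}|<\CR^2(\vee_r,\wedge_s)=K$. Summing the four estimates, $N-2\le(r-1)+(K-1)+(K-1)+(s-1)=2K+r+s-4$, i.e.\ $N\le 2K+r+s-2$, contradicting $N=2K+r+s-1$; hence $\CR^2(\diamond_r,\diamond_s)\le N$ as claimed.

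The one step that needs real care is the claim about the mixed classes $W_{12}$ and $W_{21}$: one must observe that a vertex in a mixed class can fail to close up a diamond only through the half that $v_1$ (respectively $v_N$) cannot supply, and that membership in the class pins down precisely which endpoint-edge color is available — so each mixed class has to be matched with the correctly oriented cup/cap Ramsey number, which is where the two symmetries of Theorem~\ref{thm:cpcapcup} enter. Everything else is pigeonhole, and for $r=s=2$ this reproduces the bound $\CR_2^2(\diamond_2)\le 13$ of~\cite{BCKK13}.
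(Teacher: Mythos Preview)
Your proof is correct and follows essentially the same approach as the paper's: both partition the interior vertices of $C_N$ according to the colors of their edges to $v_1$ and $v_N$, bound $|W_{11}|\le r-1$ and $|W_{22}|\le s-1$ directly from the absence of monochromatic diamonds, and use $\CR^2(\wedge_r,\vee_s)=K$ to control the mixed classes. The only cosmetic difference is that the paper applies pigeonhole to force one of $|W_{12}|,|W_{21}|\ge K$ and derives a contradiction from that single class, whereas you bound both mixed classes by $K-1$ and sum; these are equivalent presentations of the same argument.
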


\begin{proof}
Let $N=2\cdot\left\lfloor {\sqrt{1+8(r-1)(s-1)}-1\over 2}\right\rfloor+3(r+s)-1$ and suppose that $c$ is a $2$-coloring of the edges of $C_N$ that avoids copies of $\diamond_r$ in color $1$ and avoids copies of $\diamond_s$ in color $2$.
Therefore, $|\up_1(1)\cap\down_1(N)|\leq r-1$ and $|\up_2(1)\cap\down_2(N)|\leq s-1$.
Hence, $|\up_1(1)\cap\down_2(N)|+|\up_2(1)\cap\down_1(N)|\geq (N-2)-(r-1)-(s-1)=N-r-s$.
By the pigeonhole principle, there is some $i\in \{1,2\}$ for which 
\begin{align*}
|\up_i(1)\cap\down_{3-i}(N)| &\geq\left\lceil{N-r-s\over 2}\right\rceil\\
&=\left\lceil\left\lfloor {\sqrt{1+8(r-1)(s-1)}-1\over 2}\right\rfloor+r+s-{1\over 2}\right\rceil\\
&=\left\lfloor {\sqrt{1+8(r-1)(s-1)}-1\over 2}\right\rfloor+r+s \\
&=\CR^2(\wedge_r,\vee_s).
\end{align*}
If this is true for $i=1$, then $c$ restricted to $\up_1(1)\cap\down_2(N)$ must admit either a $\vee_s$ in color $2$, in which case $c$ admits a $\diamond_s$ in color $2$, or a $\wedge_r$ in color $1$, in which case $c$ admits a $\diamond_r$ in color $1$.
A similar contradiction is found if the inequality holds for $i=2$.
\end{proof}

\begin{cor}
If $s, r\geq 2$, then $\OR^2(\diamond_s,\diamond_r) = \CR^2(\diamond_s,\diamond_r) \leq (3 + \sqrt{2})(r+s) \approx 4.414(r+s).$
\end{cor}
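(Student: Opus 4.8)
The plan is to obtain this corollary as an immediate consequence of Theorem~\ref{thm:chaindiamond} together with the observation made just before Theorem~\ref{thm:chaindiamond}: since every linear extension of $\diamond_r$ is isomorphic (a diamond has a unique minimum and maximum, and its middle antichain is unordered), we have $\OR^2(\diamond_s,\diamond_r) = \CR^2(\diamond_s,\diamond_r)$, so only the upper bound on $\CR^2(\diamond_s,\diamond_r)$ needs attention. Thus the entire content of the corollary is to simplify the closed form
\[
2\cdot\left\lfloor \frac{\sqrt{1+8(r-1)(s-1)}-1}{2}\right\rfloor + 3(r+s) - 1
\]
into the clean bound $(3+\sqrt 2)(r+s)$.

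First I would drop the floor function, using $\lfloor x\rfloor \le x$, to bound the expression above by $\sqrt{1+8(r-1)(s-1)} - 1 + 3(r+s) - 1 = \sqrt{1+8(r-1)(s-1)} + 3(r+s) - 2$. Next I would bound the square-root term: since $8(r-1)(s-1) \le 2(r+s)^2$ by AM--GM (indeed $(r-1)(s-1) \le \big(\tfrac{(r-1)+(s-1)}{2}\big)^2 \le \tfrac{(r+s)^2}{4}$), we get $1 + 8(r-1)(s-1) \le 1 + 2(r+s)^2$, hence $\sqrt{1+8(r-1)(s-1)} \le \sqrt{2}(r+s) + 1$ (using $\sqrt{a+b}\le\sqrt a + \sqrt b$ with $a = 2(r+s)^2$, $b=1$, or simply noting $1+2(r+s)^2 \le (\sqrt 2 (r+s)+1)^2$ for $r+s\ge 0$). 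Substituting gives a bound of $\sqrt 2(r+s) + 1 + 3(r+s) - 2 = (3+\sqrt 2)(r+s) - 1 \le (3+\sqrt 2)(r+s)$, which is the claim; the final numerical estimate $3+\sqrt 2 \approx 4.414$ is just evaluation.

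There is essentially no obstacle here — it is a routine simplification — but the one point to be slightly careful about is the constraint $r,s\ge 2$ that makes Theorem~\ref{thm:chaindiamond} applicable, and ensuring that the AM--GM and sub-additivity-of-square-root steps are valid for all such $r,s$ (they are, since all quantities involved are nonnegative). One could tighten the constant somewhat by being more careful with the $(r-1)(s-1)$ versus $(r+s)^2/4$ slack, but since the corollary only asks for the clean constant $3+\sqrt 2$, the straightforward chain of inequalities above suffices.
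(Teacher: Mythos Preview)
Your proposal is correct and is exactly the intended derivation: the paper gives no explicit proof of this corollary, treating it as an immediate simplification of the bound in Theorem~\ref{thm:chaindiamond} together with the earlier remark (after Proposition~\ref{prop:totallyordered}, not just before Theorem~\ref{thm:chaindiamond}) that $\OR^k = \CR^k$ for pographs like $\diamond_r$ whose linear extensions are all isomorphic. Your chain of inequalities dropping the floor, applying AM--GM, and bounding the square root is precisely the routine computation the paper leaves to the reader.
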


This upper bound is asymptotically correct, up to the leading constant.

\begin{prop}\label{prop:chaindiamondlower}
If $s \geq r \geq 2$, then $\OR^2(\diamond_s,\diamond_r) = \CR^2(\diamond_s,\diamond_r) > 2s+2$.
\end{prop}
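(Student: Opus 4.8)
The plan is to construct an explicit $2$-coloring of the $2$-chains of $C_{2s+2}$ that avoids a copy of $\diamond_s$ in color $1$ and a copy of $\diamond_r$ in color $2$; together with the equality $\CR^2(\diamond_s,\diamond_r)=\OR^2(\diamond_s,\diamond_r)$ (which holds because every linear extension of $\diamond_k$ is isomorphic, by the observation preceding Proposition~\ref{prop:totallyordered}), this yields the claim. Throughout I would use that a copy of $\diamond_k$ in color $i$ inside a chain $C_N$ is exactly a pair $a<b$ together with $k$ distinct vertices $c$ satisfying $a<c<b$ for which both $2$-chains $\{a,c\}$ and $\{c,b\}$ receive color $i$; the $2$-chain $\{a,b\}$ plays no role, since $\diamond_k$ is a Hasse diagram.

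First I would take the vertex set of $C_{2s+2}$ to be $[2s+2]$ and split it into the two consecutive blocks $I_1=\{1,\dots,s+1\}$ and $I_2=\{s+2,\dots,2s+2\}$, each of size $s+1$. Color a $2$-chain $\{i,j\}$ with color $1$ if $i$ and $j$ lie in the same block, and with color $2$ otherwise. I would then verify the two conditions separately. For color $1$: if $\{a,c\}$ and $\{c,b\}$ are both color $1$, then $a,c$ share a block and $c,b$ share a block, so $a$, $b$, and every such center $c$ lie in a single block $I_j$; since $|I_j|=s+1$, a pair $a<b$ inside $I_j$ has at most $s-1$ elements strictly between them, so there is no color-$1$ copy of $\diamond_s$. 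For color $2$: if $\{a,c\}$ and $\{c,b\}$ are both color $2$ with $a<c<b$, then $\{a,c\}$ crosses the blocks, forcing $a\in I_1$ and $c\in I_2$, while $\{c,b\}$ crosses the blocks, forcing $c\in I_1$ — a contradiction. Hence color $2$ contains no copy of $\diamond_1$, and a fortiori none of $\diamond_r$.

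This shows $\CR^2(\diamond_s,\diamond_r)>2s+2$, and the equality with $\OR^2$ completes the proof. I do not expect a genuine obstacle: the construction works verbatim for every $r\geq 1$, so the hypothesis $s\geq r$ serves only to normalize which of the symmetric bounds $2s+2$ and $2r+2$ is recorded. The only points needing care are the direction of the chain order in the color-$2$ analysis and the fact that the "long'' edge $\{a,b\}$ is not an edge of $\diamond_k$, but both are routine.
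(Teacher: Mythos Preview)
Your proof is correct and follows essentially the same construction as the paper: split $[2s+2]$ into two consecutive blocks of size $s+1$, color within-block edges $1$ and cross-block edges $2$, and observe that color $1$ is confined to a block too small to host $\diamond_s$ while color $2$ contains no path of two edges. Your write-up is slightly more explicit about why each color fails (counting interior elements for color $1$, tracing the contradiction on block membership for color $2$) and about the $\CR^2=\OR^2$ equality, but the idea is identical.
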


\begin{proof}
Let $N = 2s+2$ and consider $X_1 = \{1,\dots,s+1\}$ and $X_2 = \{s+2,\dots,N\}$.
If an edge has both endpoints in $X_i$ for some $i$, then color that edge with color 1. 
If an edge has one endpoint in $X_1$ and another in $X_2$, then color that edge with color 2.
Observe that there is no $\diamond_r$ in color 2, as there is no chain of length 2 in color 2.
Further, there is no $\diamond_s$ in color 1, as such a subgraph would be entirely contained in $X_1$ or $X_2$, but these sets have size $s+1$ and $|V(\diamond_s)| = s+2$.
\end{proof}

Note that Proposition \ref{prop:chaindiamondlower} immediately implies that if $s\geq r\geq 2$, then $\BR^2(\diamond_s,\diamond_r)\geq\lg(2s+3)$.

To investigate and upper bound on the Boolean Ramsey numbers of diamonds, we first consider diamonds and cups (Theorem~\ref{thm:booldiamondcup}) before completing the argument for two diamonds (Theorem~\ref{thm:booldiamondupper}).

\begin{theorem}\label{thm:booldiamondcup}
Let $s,r \geq 2$ be integers.
\[
	\BR^2(\diamond_s,\vee_r) \leq \BR^2(\wedge_{s+r},\vee_r) \leq \left\lceil\log_{3/2}(2r+s-1) \right\rceil
\]
\end{theorem}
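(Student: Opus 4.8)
The plan is to reduce the diamond-versus-cup problem to the cap-versus-cup problem already handled in Theorem~\ref{thm:boolcupcap}, and then quote that bound. The first inequality $\BR^2(\diamond_s,\vee_r)\leq\BR^2(\wedge_{s+r},\vee_r)$ should follow by a containment-type argument on colorings: given a $2$-coloring $c$ of $B_N$ that avoids a $1$-colored $\diamond_s$ and a $2$-colored $\vee_r$, I want to show it also avoids a $1$-colored $\wedge_{s+r}$. Suppose for contradiction that there is a $1$-colored $\wedge_{s+r}$ with apex $y$ and $s+r$ lower elements $x_1,\dots,x_{s+r}$ all below $y$ with $c(x_iy)=1$. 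The point is that $y\neq[N]$ need not hold, but in the Boolean lattice $y$ has an up-set; more importantly, each $x_i$ lies strictly below $y$, so each $x_i$ has some element strictly above it other than $y$. The cleaner route: since $c$ avoids a $2$-colored $\vee_r$, for any fixed $x_i$ at most $r-1$ of its up-neighbours are colored $2$; so among any $r$ elements above $x_i$ at least one is colored $1$. I would pick, for each of the $s+r$ elements $x_i$, using the structure of $B_N$, a common element $z$ above all of them (for instance $z=[N]$, or the join) — then among $x_1,\dots,x_{s+r}$ at most $r-1$ can have $c(x_iz)=2$, so at least $s+1$ of them have $c(x_iz)=1$; together with $y$ this yields $x_j\leq y$ and $x_j\leq z$ with both edges colored $1$, i.e.\ an $r$-diamond-ish configuration — here I would take $z=[N]$ and note that at least $s$ of the $x_i$ lie strictly below both $y$ and $[N]$ with all four relevant edges colored $1$, producing a $1$-colored $\diamond_s$, the desired contradiction.

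For the second inequality I would simply invoke Theorem~\ref{thm:boolcupcap} with the cap parameter $s+r$ and cup parameter $r$: that theorem gives $\BR^2(\wedge_{s+r},\vee_r)\leq\lceil\log_{3/2}((s+r)+r-1)\rceil=\lceil\log_{3/2}(2r+s-1)\rceil$, which is exactly the claimed bound. Note Theorem~\ref{thm:boolcupcap} is stated for $\BR^2(\vee_r,\wedge_s)$, but by the obvious order-dual symmetry of the Boolean lattice (complementation reverses all relations and swaps cups with caps) we have $\BR^2(\wedge_a,\vee_b)=\BR^2(\vee_b,\wedge_a)$, so the bound applies with the roles as needed.

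The step I expect to be the main obstacle is making the first inequality fully rigorous: I need to be careful that the apex $y$ of the hypothetical $1$-colored $\wedge_{s+r}$ and the top element $[N]$ interact correctly, since $y$ could itself equal $[N]$ or be incomparable to elements I want to use. The robust fix is to argue directly at the level of colorings via up-sets: for the coloring $c$, $c$ avoids a $1$-colored $\wedge_{s+r}$ unless some $y$ has $|\down_1(y)|\geq s+r$, and I want to convert $s+r$ elements below $y$ (with $\downarrow$-edges colored $1$) plus the cup-freeness in color $2$ into $s$ elements simultaneously below $y$ and below a common upper bound with all four edges color $1$. Since $\down_1(y)$ all sit in $\down(y)$, pick any $w$ with $y<w$ (possible unless $y=[N]$; if $y=[N]$ instead use that $\up_2$ being empty forces all down-edges from $[N]$ to be color $1$, giving the cap directly from the cup-free bound on a level below). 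For $y\neq[N]$, among the $\geq s+r$ elements of $\down_1(y)$, the $2$-colored-cup-freeness at $w$ — wait, the relevant bound is on up-sets of the $x_i$, not of $w$ — so I instead use: each $x_i\in\down_1(y)$ has $|\up_2(x_i)|\leq r-1$; hence at most... this counting needs the common top, so take $w=[N]$ uniformly: at most $r-1$ of the $x_i$ have $c(x_i,[N])=2$ is false in general since the bound $|\up_2(x_i)|\le r-1$ is per-$x_i$, not a joint bound. The correct observation is that $\{x_i : c(x_i,[N])=2\}$ together with $[N]$ would form... no. The clean statement: among $x_1,\dots,x_{s+r}$, if $\geq r$ had $c(x_i,[N])=2$ then those $r$ elements plus $[N]$ form a $2$-colored $\vee_r$ (bottom $[N]$? no, $\vee_r$ has one bottom and $r$ tops) — actually $[N]$ is on top, so $r$ elements below $[N]$ all with color-$2$ edges to $[N]$ is a $2$-colored $\wedge_r$, not $\vee_r$; and $\wedge_r\subseteq\diamond_s$? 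No. So the dual configuration with $\varnothing$ is what matches $\vee_r$. This is exactly the delicate bookkeeping I will need to get right, and I anticipate spending most of the proof making the choice of common bound ($[N]$ versus $\varnothing$ versus a join) consistent with which of $\diamond_s$, $\vee_r$ is being forbidden in which color.
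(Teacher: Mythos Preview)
Your plan matches the paper's exactly: the second inequality is Theorem~\ref{thm:boolcupcap} with cap parameter $s+r$ and cup parameter $r$ (using the color-swap symmetry $\BR^2(\wedge_a,\vee_b)=\BR^2(\vee_b,\wedge_a)$), and the first is the implication ``a $1$-colored $\wedge_{s+r}$ together with no $2$-colored $\vee_r$ forces a $1$-colored $\diamond_s$.''

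The only issue is the detour through $[N]$, which you yourself correctly diagnose as producing a color-$2$ $\wedge_r$ rather than a $\vee_r$. Since the forbidden color-$2$ object is a \emph{cup}, the extra vertex you adjoin must be a common \emph{lower} bound of the $x_i$, and $\varnothing$ is the canonical choice; you reach this at the end but stop short of executing it. Once you commit to $\varnothing$ the argument is one line, and it is verbatim the paper's proof: given a $1$-colored cap with apex $y$ and feet $x_1,\dots,x_{s+r}$ (at most one of which can equal $\varnothing$), among the at least $s+r-1$ edges $(\varnothing,x_i)$ at most $r-1$ have color $2$, else $\varnothing$ together with $r$ of the $x_i$ is a $2$-colored $\vee_r$; hence at least $s$ of the $x_i$ satisfy $c(\varnothing,x_i)=1$, and these together with $\varnothing$ below and $y$ above form a $1$-colored $\diamond_s$. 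Note that $\diamond_s$ is the Hasse diagram, so the color of $(\varnothing,y)$ is irrelevant, and the case $y=[N]$ needs no separate treatment once you anchor at $\varnothing$ rather than at the top.
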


\begin{proof}
The second inequality holds by Theorem~\ref{thm:boolcupcap}.
Let $N = \BR^2(\wedge_{s+r},\vee_r)$ and consider a 2-coloring of the edges of $B_N$ and suppose the 2-coloring does not contain an $r$-cup in color 2.
Therefore, there is an $(s+r)$-cap in color 1.
Let $A_0,A_1,\dots,A_{s+r-1}, B$ be the sets in this cap where $A_i \subseteq B$ for all $i$.
If the empty set is in the cap, then let $A_0 = \varnothing$.
There are $s+r-1$ edges from the empty set to the sets $A_i$ with $i \in \{1,\dots,s+r-1\}$.
Since the coloring avoids $r$-cups in color 2, there must be at least $s$ sets $A_i$ such that the edge $(\varnothing,A_i)$ has color 1.
Thus, these $A_i$'s along with the empty set and $B$ forms an $s$-diamond of color 1.
\end{proof}

\begin{theorem}\label{thm:booldiamondupper}
Let $s, r \geq 2$ be integers.
\[
	\BR^2(\diamond_s,\diamond_r) \leq \BR^2(\diamond_s, \vee_{s+r-1}) + \left\lceil\lg(2s+2r)\right\rceil \leq 2\left\lceil\log_{3/2}(2r+2s-1)\right\rceil.
\]
\end{theorem}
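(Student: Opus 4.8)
The plan is to prove the two inequalities in turn; the second follows routinely from the first together with Theorem~\ref{thm:booldiamondcup}, so the content is in the first.

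\emph{Second inequality.} Applying Theorem~\ref{thm:booldiamondcup} with cup parameter $s+r-1$ in place of $r$ gives $\BR^2(\diamond_s,\vee_{s+r-1})\le\lceil\log_{3/2}(2(s+r-1)+s-1)\rceil=\lceil\log_{3/2}(3s+2r-3)\rceil$. Combined with the first inequality, the claimed bound then reduces to the elementary estimate
\[
\lceil\log_{3/2}(3s+2r-3)\rceil+\lceil\lg(2s+2r)\rceil\le 2\lceil\log_{3/2}(2s+2r-1)\rceil\qquad(r,s\ge 2),
\]
which I would obtain by a direct calculation: the key points are $3s+2r-3\le\frac{3}{2}(2s+2r-1)$ (whence $\log_{3/2}(3s+2r-3)\le 1+\log_{3/2}(2s+2r-1)$) and the fact that $\lg x<\log_{3/2}x$ for $x>1$ (whence $\lg(2s+2r)\le\log_{3/2}(2s+2r-1)$), which together leave enough slack in the unrounded quantities to absorb the ceilings for all but a few small $(r,s)$, which are handled by inspection.

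\emph{First inequality.} Put $N_1=\BR^2(\diamond_s,\vee_{s+r-1})$ and $N_2=\lceil\lg(2s+2r)\rceil$, so that $2^{N_2}\ge 2(s+r)$, and let $N=N_1+N_2$. Suppose $c$ is a $2$-colouring of the comparability graph of $B_N$ with no $\diamond_s$ in colour~$1$ and no $\diamond_r$ in colour~$2$; I want a contradiction. Restricting $c$ to the subposet $X$ of subsets of $[N_1]$ (a copy of $B_{N_1}$) and using the definition of $N_1$, we obtain a $\vee_{s+r-1}$ in colour~$2$ inside $X$: a set $A\subseteq[N_1]$ and distinct $B_1,\dots,B_{s+r-1}\subseteq[N_1]$ with $A\subsetneq B_i$ and $c(AB_i)=2$ for each $i$. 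Now the interval $Y=[[N_1],[N]]$ is a copy of $B_{N_2}$ with $2^{N_2}\ge 2(s+r)$ elements, each of which contains $[N_1]\supseteq B_i$ and is therefore a common upper bound of all the $B_i$ — strict except possibly for the single pair with $W=B_i=[N_1]$; likewise $A$ lies strictly below every element of $Y$, and $[N]\in Y$.

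I would then use $Y$ to close the colour-$2$ cup into a monochromatic diamond, via the remark that a copy of $\diamond_\ell$ in colour $i$ is exactly a pair $U\subsetneq V$ for which at least $\ell$ elements $W$ with $U\subsetneq W\subsetneq V$ satisfy $c(UW)=c(WV)=i$. Since each $B_i$ is a colour-$2$ middle over $A$, if some $V\in Y$ has $c(B_iV)=2$ for at least $r$ values of $i$, then $A$, those $B_i$ and $V$ give a $\diamond_r$ in colour~$2$; so I may assume every $V\in Y$ has $c(B_iV)=1$ for at least $s$ values of $i$. Double counting the pairs $(i,V)$ with $c(B_iV)=1$ and applying pigeonhole over $i$ then produces a single index $i^\ast$ with $c(B_{i^\ast}V)=1$ for more than $2s$ sets $V\in Y$ (this is where $2^{N_2}\ge 2(s+r)$ is consumed). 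Taking $[N]$ as the top of the prospective $\diamond_s$ and examining the colours of the edges from those $V$ into $[N]$: if at least $s$ are colour~$1$, we get a $\diamond_s$ in colour~$1$ with bottom $B_{i^\ast}$ and top $[N]$; if fewer, then many of them are colour~$2$, and I would feed in the colour-$2$ edges $c(AV)$ to break up the remaining ``colour-$1$ below, colour-$2$ above'' configurations, in each case arriving at one of the two forbidden diamonds.

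The real obstacle is precisely the colour bookkeeping in that last step: a configuration whose bottom half is monochromatic in one colour and whose top half is monochromatic in the other contains no monochromatic diamond at all, so the case analysis must be set up so that every branch forces one of the two halves to line up, and the counting margins must stay positive across two such passes — this, rather than the pigeonholes themselves, is where the work lies. Keeping the small coincidences at the extreme elements ($W=[N_1]$, and the possibilities $A=\varnothing$ or $A=[N_1]$) under control is a further (routine) nuisance.
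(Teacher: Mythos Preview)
Your setup for the first inequality is right --- a copy of $B_{N_1}$ below to produce the colour-$2$ cup $A,B_1,\dots,B_{s+r-1}$, and the interval $Y$ above $[N_1]$ to close it --- but the endgame you sketch has a genuine gap, and you have in fact identified it yourself: the ``colour~$1$ below, colour~$2$ above'' configuration is not ruled out by your case split. After pigeonholing to a single $B_{i^\ast}$ with many colour-$1$ edges into $Y$, the branch where most of those $V$'s have $c(V,[N])=2$ leaves you needing $r$ of them with $c(A,V)=2$; when $r>s$ your surviving pool need not be that large, and the alternative $c(A,V)=1$ only regenerates the same mixed pattern with $[N]$ as the sole available top. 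The argument as written does not close.

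The missing idea is to \emph{pre-filter the upper sets by the colour of their edge to the top before doing any counting}. Set $I_j=\{Z:[N_1]\subsetneq Z\subsetneq[N],\ c(Z,[N])=j\}$; since $|I_1|+|I_2|=2^{N_2}-2\ge 2(s+r-1)$, one of them has size at least $s+r-1$. In the case $|I_1|\ge s+r-1$ (the other is symmetric), fix $I\subseteq I_1$ of size exactly $s+r-1$. Now look at the complete bipartite graph between $\{B_1,\dots,B_{s+r-1}\}$ and $I$. Because every $Z\in I$ already satisfies $c(Z,[N])=1$, any $B_j$ with $s$ colour-$1$ edges into $I$ yields a colour-$1$ $\diamond_s$ with top $[N]$; and because every $B_j$ already satisfies $c(A,B_j)=2$, any $Z\in I$ receiving $r$ colour-$2$ edges yields a colour-$2$ $\diamond_r$ with bottom $A$. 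Hence there are at most $(s-1)(s+r-1)$ colour-$1$ edges and at most $(r-1)(s+r-1)$ colour-$2$ edges, for a total of at most $(s+r-2)(s+r-1)<(s+r-1)^2$, a contradiction. The point is that fixing the top-edge colour in advance makes \emph{both} colours of the middle bipartite edge immediately fatal, so a single double count replaces your entire case analysis.

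Your treatment of the second inequality via Theorem~\ref{thm:booldiamondcup} and elementary estimates is fine and matches the paper's ``logarithmic identities'' remark.
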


\begin{proof}
The second inequality holds by Theorem~\ref{thm:booldiamondcup} and logarithmic identities.
Let $N = \BR^2(\diamond_s,\vee_{s+r-1})$ and $M = \left\lceil\lg(2s+2r)\right\rceil$.
Consider a 2-coloring $c$ of the edges of $B_{N + M}$.
Suppose for the sake of contradiction that $c$ does not contain an $s$-diamond in color 1 and does not contain an $r$-diamond in color 2.

For $j \in \{1,2\}$, let $I_j$ contain the sets $Z$ such that $[N] \subset Z \subset [N+M]$. and $c(Z,[N+M]) = j$.
Since $|I_1\cup I_2| = 2^M-2 \geq 2s+2r-2$, either $|I_1| \geq s+r-1$ or $|I_2| \geq s+r-1$.
We will assume that $|I_1| \geq s+r-1$; the other case follows by a symmetric argument.
Let $I \subseteq I_1$ with $|I| = s+r-1$.

Since $N = \BR^2(\diamond_s,\vee_{s+r-1})$, and $c$ does not contain an $s$-diamond in color 1, there exists an $(s+r-1)$-cup of color $2$ in $\down([N])$.
Let $A_0, B_1,\dots,B_{s+r-1}$ be the sets of this cup such that $A_0 \subset B_j$ for each $j \in \{1,\dots, s+r-1\}$.
Notice that $B_j \subset Z$ for all $j\in \{1,\dots,s+r-1\}$ and all $Z \in I$.
The edges between $B_1,\dots,B_{s+r-1}$ and the sets $Z \in I$ form a 2-colored copy of the complete bipartite graph $K_{s+r-1,s+r-1}$.

For every $j \in \{1,\dots,s+r-1\}$ there are at most $s-1$ edges of color $1$ from $B_j$ to the sets $Z \in I$, since $c$ avoids $s$-diamonds in the color $1$.
For every $Z \in I$, there are at most $r-1$ edges of color $2$ from the sets $B_1,\dots,B_{s+r-1}$ to $Z$, since $c$ avoids $r$-diamonds in the color $2$.
However, this implies that the total number of edges in this complete bipartite graph is at most $(s+r-1)((s-1)+(r-1)) < (s+r-1)^2$, a contradiction.
\end{proof}

Using Theorems~\ref{thm:booldiamondcup} and \ref{thm:booldiamondupper}, we find $\BR^2(\diamond_2,\vee_3) \leq 5$ and $\BR^2(\diamond_2,\diamond_2) \leq 8$.
With a more specialized argument for the case $r = s= 2$, one can prove $\BR^2(\diamond_2,\diamond_2) \leq \BR^2(\diamond_2,\vee_3) + 2$, but this is not tight.
In the next section, we discuss computational methods to compute Boolean Ramsey numbers, and we verify that $\BR^2(\diamond_2,\vee_3) = 4$ and $\BR^2(\diamond_2,\diamond_2)=5$.

\section{Computational Results}\label{sec:computation}

Ramsey numbers are difficult to compute in all but the simplest of cases.
A na\"ive algorithm for testing $\R_t^k(G) > n$ takes $O(t^{n^k})$ steps, and advanced algorithm techniques do not improve on the asymptotic growth of this method.
However, using the same method to test $\BR_t^k(G) > n$ can require $O(t^{(k+1)^n})$ steps.
In fact, simply storing a $t$-coloring of the $k$-chains in $B_n$ requires $(k+1)^n\lg t$ bits of space.
This makes finding exact values of 2-color, 2-uniform Boolean Ramsey numbers very difficult once $n \geq 5$.

To test if $\BR^2(H_1,H_2) > n$, we use a SAT formulation to determine if there exists a 2-coloring $c$ of the comparable pairs in $B_n$ that avoids copies of $H_1$ in color 1 and avoids copies of $H_2$ in color 2.
For every comparable pair $A \subset B$, we let $x_{A,B}$ be a Boolean variable; the variable $x_{A,B}$ is true exactly when $c(A,B) = 1$.
For every copy of $H_1$ in $B_n$, we create a constraint that requires at least one variable $x_{A,B}$ to be false among the edges $(A,B)$ in the copy of $H_1$.
Similarly, for every copy of $H_2$ in $B_n$, we create a constraint that requires at least one variable $x_{A,B}$ to be true among the edges $(A,B)$ in the copy of $H_2$.
There exists such a 2-coloring if and only if these constraints can be simultaneously satisfied.

We used a similar SAT formulation to demonstrate that $\BR^1(B_n,B_m) > n + m -1$ (formulation is satisfiable) and $\BR^1(B_n,B_m) \leq n +m$ (formulation is unsatisfiable) when $3 \geq n \geq m \geq 1$ and $n$ and $m$ are not both $3$.

We used Sage~\cite{Sage} to construct our SAT formulations in SMT2 format.
We then used the Microsoft Z3~\cite{Z3} SMT\footnote{Satisfiability Modulo Theory.} solver to test the formulations.
The results are summarized in Table~\ref{tab:computation}.
These computations were completed using a standard laptop computer with each test taking at most a few hours.
All Sage code and SAT formulations are available online\footnote{See \url{http://orion.math.iastate.edu/dstolee/data.htm} for all code and data.}.

This method was limited by the exponential growth in the size of the formulations more than the time it takes to solve them.
We selected only a few examples to test with $n = 5$ due to the number of copies of the pographs $H_1$ and $H_2$ that appeared within $B_5$.
We could test $\BR^2(B_2,\diamond_2) = \BR^2(B_2,B_2) = 6$ due to the fact that $B_2$  and $\diamond_2$ have only four elements, which greatly limited the number of copies appear within $B_6$, but these tests were our largest computations.

\begin{table}[tp]
\centering
\subfigure[$\BR^2(\vee_r,\wedge_s)$.]{\begin{tabular}[h]{r||c|c|c|c}& $\wedge_2$ & $\wedge_3$ & $\wedge_4$ & $\wedge_5$ \\\hline&&&&\\[-2.5ex]\hline
$\vee_2$  & 3 & 3 & 4 & 4\\\hline
$\vee_3$  & 3 & 4 & 5 & 5\\\hline
$\vee_4$  & 4 & 5 & 5 & \\\hline
$\vee_5$  & 4 & 5 &  & \\\end{tabular}}
\qquad
\subfigure[$\BR^2(H_1,H_2)$.]{\begin{tabular}[h]{r||c|c}& $\diamond_2$& $B_2$\\\hline&&\\[-2.5ex]\hline
$\diamond_2$  & 5 & 6\\\hline
$B_2$  & 6 & 6\\\end{tabular}}
\qquad
\subfigure[~$\BR^2(C_r,\wedge_s)=~\BR^2(C_r,\vee_s)$]{\begin{tabular}[h]{r||c|c|c}& $\wedge_2$ & $\wedge_3$ & $\wedge_4$\\\hline&&&\\[-2.5ex]\hline
$C_2$  & 2 & 2 & 3\\\hline
$C_3$  & 3 & 3 & 4\\\hline
$C_4$  & 4 & 4 & 5\\\end{tabular}}\\

\subfigure[$\BR^2(M_r,M_s)$.]{\begin{tabular}[h]{r||c|c|c}& $M_2$& $M_3$& $M_4$\\\hline&&&\\[-2.5ex]\hline
$M_2$  & 3 & 3 & 4\\\hline
$M_3$ &  & 3 & 4\\\hline
$M_4$  &  &  & 4\\\end{tabular}}
\qquad
\subfigure[$\BR^2(C_r,M_s)$]{\begin{tabular}[h]{r||c|c|c}& $M_2$ & $M_3$& $M_4$\\\hline&&&\\[-2.5ex]\hline
$C_2$  & 2 & 3 & 3\\\hline
$C_3$  & 3 & 4 & 4\\\hline
$C_4$  & 4 & 5 & \\\hline
$C_5$  & 5 &  & \\\end{tabular}}
\qquad
\subfigure[~$\BR^2(\vee_r,M_s)=~\BR^2(\wedge_r,M_s)$.]{\begin{tabular}[h]{r||c|c|c}& $M_2$ & $M_3$ & $M_4$ \\\hline&&&\\[-2.5ex]\hline
$\vee_2$  & 3 & 3 & 4\\\hline
$\vee_3$  & 3 & 4 & 4\\\hline
$\vee_4$  & 4 & 4 & 4\\\hline
$\vee_5$  & 4 & 4 & 4\\\hline
$\vee_6$  & 4 & 4 & 4\\\end{tabular}}\\

\subfigure[$\BR^2(\vee_r,\bowtie_m^n)=\BR^2(\wedge_r,\bowtie_n^m)$]{\hspace{0.25in}\begin{tabular}[h]{r||c|c|c|c}& $\bowtie_2^2$ & $\bowtie_2^3$& $\bowtie_3^2$ & $\bowtie_3^3$ \\\hline&&&&\\[-2.5ex]\hline
$\vee_2$  & 4 & 4 & 4 & 5\\\hline
$\vee_3$  & 4 & 4 & 5 & 5\\\hline
$\vee_4$  & 4 & 5 &  & \\\end{tabular}\hspace{0.25in}}
\qquad
\subfigure[~$\BR^2(\vee_r,W_s)=~\BR^2(\wedge_r,W_s)$]{\begin{tabular}[h]{r||c|c|c|c}& $W_2$& $W_3$& $W_4$& $W_5$\\\hline&&&&\\[-2.5ex]\hline
$\vee_2$  & 4 & 4 & 4 & 4\\\hline
$\vee_3$  & 4 & 4 & 4 & 4\\\hline
$\vee_4$  & 4 & 5 &  & \\\end{tabular}}
\qquad
\subfigure[~$\BR^2(H,\wedge_s)=~\BR^2(H,\vee_s)$.]{\begin{tabular}[h]{r||c|c|c}& $\wedge_2$ & $\wedge_3$ & $\wedge_4$ \\\hline&&&\\[-2.5ex]\hline
$\diamond_2$  & 4 & 4 & 5\\\hline
$B_2$  & 4 & 4 & 5\\\hline
$\diamond_3$  & 4 & 5 & 5\\\end{tabular}}

\caption{\label{tab:computation}Computational results for small 2-uniform Boolean Ramsey numbers.}
\end{table}

A highly specialized algorithm may be able to extend these results to more examples when $n = 6$, but we expect this will be very difficult.

\section{Other Poset Families}\label{sec:relations}

While we have mainly focused on chain Ramsey numbers and Boolean Ramsey numbers, many other families of posets can give rise to interesting Ramsey numbers. 

\subsection{Generic Poset Families}
Let $\P = \{ P_n : n \geq 1\}$ be a poset family with $P_n\subseteq P_{n+1}$ for all $n$.
For a $t$-tuple $(G_1,\dots,G_t)$ of $k$-uniform pographs, we say that $\P$ is \emph{$k$-Ramsey for $(G_1,\dots,G_t)$} if there exists an $N$ such that every $t$-coloring of the $k$-chains in $P_N$ contains an $i$-colored copy of $G_i$ for some $i$.
The partially-ordered Ramsey number $\PR{\P}^k(G_1,\dots,G_t)$ exists exactly when $\P$ is $k$-Ramsey for $(G_1,\dots,G_t)$.

We say a family $\P$ is a \emph{universal poset family} if $\P$ is $k$-Ramsey for every $t$-tuple of $k$-uniform pographs and every $k \geq 1$.
If the height of $P_n$ grows without bound, then $\P$ is a universal poset family as eventually $P_n$ contains a chain of order $\CR^k(G_1,\dots,G_t)$ for any $G_1,\dots, G_t$. In fact, universal poset families are exactly those where $h(P_n)$ is unbounded.
Some of our results hold for universal poset families, such as Propositions~\ref{prop:multicupcap}.
Other results must be generalized slightly,  such as the following generalization of Proposition~\ref{prop:chainboundbool}.

\begin{prop}\label{prop:chainbound}
Let $\P = \{ P_n : n \geq 1\}$ be a universal poset family.
Define $s_{\P}(n)$ to be the minimum $N$ such that $|P_N| \geq n$.
Define $h_{\P}(n)$ to be the minimum $N$ such that $h(P_N)\geq n$.
Then,
\[
	s_{\P}( \CR^k(G_1,\dots,G_t) ) \leq \PR{\P}^k(G_1,\dots,G_t) \leq h_{\P}(\CR^k(G_1,\dots,G_t)).
\]
\end{prop}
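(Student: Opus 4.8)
The plan is to mimic the two inequalities of Proposition~\ref{prop:chainboundbool}, replacing the explicit values $N-1$ and $2^n$ by the functions $h_{\P}$ and $s_{\P}$. Write $R = \CR^k(G_1,\dots,G_t)$; this is finite since a chain of order $R$ forces a monochromatic copy, and it is the relevant quantity because $\P$ is universal (so $h(P_n)$ is unbounded and both $s_{\P}$ and $h_{\P}$ are well-defined at every argument).

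For the upper bound, set $N = h_{\P}(R)$. By definition $h(P_N) \geq R$, so $P_N$ contains a chain $Q$ isomorphic to $C_R$. Given any $t$-coloring $c$ of the $k$-chains of $P_N$, restrict $c$ to the $k$-chains of $Q$; since $|Q| \geq R = \CR^k(G_1,\dots,G_t)$, this restricted coloring contains an $i$-colored copy of $G_i$ for some $i$, which is also an $i$-colored copy inside $P_N$. Hence $\PR{\P}^k(G_1,\dots,G_t) \leq N = h_{\P}(R)$.

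For the lower bound, set $N = \PR{\P}^k(G_1,\dots,G_t)$ and $M = s_{\P}(R)$; I want to show $M \leq N$, equivalently $|P_N| \geq R$. Suppose instead $|P_N| \leq R - 1$. Then $P_N$ embeds (as a poset, via a linear extension $\pi$) into the chain $C_{R-1}$. Take any $t$-coloring $c_0$ of the $k$-chains of $C_{R-1}$ witnessing $\CR^k(G_1,\dots,G_t) > R-1$, i.e.\ with no $i$-colored copy of $G_i$. Pull this back along $\pi$: for a $k$-chain $A$ in $P_N$, set $c(A) = c_0(\pi(A))$. Since $\pi$ is order-preserving, any weak embedding of a pograph $G_i$ into $P_N$ composed with $\pi$ gives a weak embedding into $C_{R-1}$ carrying $i$-colored edges to $i$-colored edges; as $c_0$ has no such copy, neither does $c$. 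This contradicts $N = \PR{\P}^k(G_1,\dots,G_t)$, so $|P_N| \geq R$ and thus $s_{\P}(R) \leq N$.

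The only subtlety — and the step to state carefully rather than the main obstacle — is checking that a $k$-chain in $P_N$ maps to a genuine $k$-chain in $C_{R-1}$ under a linear extension $\pi$, so that the pullback coloring is well-defined on all $k$-chains, and conversely that copies of $G_i$ transfer correctly; both are immediate from $\pi$ being an injective order-preserving map. Everything else is a direct translation of the argument already written for Proposition~\ref{prop:chainboundbool}, with $h_{\P}$ and $s_{\P}$ in place of the chain-specific bounds.
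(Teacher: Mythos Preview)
Your proof is correct and follows essentially the same approach as the paper. The paper does not give a separate proof of this proposition, presenting it as the evident generalization of Proposition~\ref{prop:chainboundbool}; your argument is exactly that generalization, using a linear extension $\pi:P_N\to C_{|P_N|}$ in place of $\pi:B_n\to C_{2^n}$, with the lower bound phrased contrapositively rather than directly, which is a purely cosmetic difference.
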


Using the function $h_{\P}(n)$, one can restate Proposition~\ref{prop:totallyordered} as $\PR{\P}^k(G_1,\dots,G_t) = h_{\P}(\CR^k(G_1,\dots,G_t))$ for totally-ordered graphs $G_1,\dots,G_t$.

\subsection{Rooted Bipartite Ramsey Numbers}
A poset family does not need to be universal in order to be interesting.
Consider the family $\K = \{ \bowtie_n^n : n \geq 1\}$ of $n,n$-butterfly posets.
This family is not universal since $C_3 \not\subseteq\, \bowtie_n^n$ for any $n$.
However, we can still consider $G_1,\dots,G_t$ to be pographs whose posets partition into two antichains $V(G_i) = X_i\cup Y_i$ where every $x \in X_i$ is comparable to at least one element $y \in Y_i$ with $x \leq y$.
In this case, the Ramsey number $\PR{\K}^2(G_1,\dots,G_t)$ is the minimum $N$ such that every $t$-coloring of the edges of the complete bipartite graph $K_{N,N}$ with vertex set $V(K_{N,N}) = A\cup B$ contains an $i$-colored copy of the bipartite graph $G_i$ where $X_i \subseteq A$ and $Y_i \subseteq B$ for some $i$. 

If we remove the condition that $X_i \subseteq A$ and $Y_i \subseteq B$, then this Ramsey problem is identical to finding \emph{bipartite Ramsey numbers} (see~\cite{beineke1975bipartite,conlon2008new,goddard2000bipartite,hattingh1998bipartite,irving1978bipartite}).
The equivalent of the Tur\'an problem in this context is called the \emph{Zarenkiewicz problem} (see~\cite{furedi1996upper,goddard2000bipartite,irving1978bipartite}).
The most widely studied version of these numbers are those where $G_i = \bowtie_n^m$ for some $n, m$.

With the condition that $X_i \subseteq A$ and $Y_i \subseteq B$, we can call $\PR{\K}^2(G_1,\dots,G_t)$ the \emph{rooted} bipartite Ramsey number.
In this case, it may be true that $\PR{\K}^2(\bowtie_r^s,\bowtie_r^s) \neq \PR{\K}^2(\bowtie_r^s,\bowtie_s^r)$ when $r \neq s$.
The final paragraph of the proof of Thereom~\ref{thm:booldiamondupper} implicitly proves and uses the fact that $\PR{\K}^2(\wedge_s,\vee_r) = \PR{\K}^2(\vee_s,\vee_r) = s+r-1$.

\subsection{High-Dimensional Grids}

Closely related to the Boolean lattice is the \emph{$m$-dimensional $\ell$-grid} $([\ell]^m,\preceq)$, whose elements are $m$-tuples $(x_1,\dots,x_n)$ where every coordinate $x_i$ is in the set $[\ell]$, and $(x_1,\dots,x_n)\preceq(y_1,\dots,y_n)$ if and only if $x_i\leq y_i$ for all $i$ (in particular, the Boolean lattice $B_n$ corresponds to $[2]^n$).
When constructing a universal poset family $\P = \{P_n : n \geq 1\}$ from these grids, we have two natural options for the parameter $n$.
First, we could have the dimension grow with $n$: let $Q_n(\ell) = [\ell]^n$ and $\Q(\ell) = \{ Q_n(\ell) : n \geq 1\}$.
Second, we could have the length grow with $n$: let $H_n(m) = [n]^m$ and $\cH(m) = \{ H_n(m) : n \geq 1\}$.
Along these lines, we provide analogues of theorems from Section \ref{sec:2unif} for each of these cases.

\begin{theorem}[Analogue of Theorem \ref{thm:boolcupcap}]
For $s,r\geq 2$,
\begin{align*}
\log_\ell\left(\left\lfloor {\sqrt{1+8(r-1)(s-1)}-1\over 2}\right\rfloor+r+s\right)\leq\PR{\Q(\ell)}^2(\vee_r,\wedge_s)\leq\left\lceil\log_{(\ell+1)/2}(r+s-1)\right\rceil & \quad\text{and}\\
\left(\left\lfloor {\sqrt{1+8(r-1)(s-1)}-1\over 2}\right\rfloor+r+s\right)^{1/m}\leq \PR{\cH(m)}^2(\vee_r,\wedge_s)\leq  \left\lceil 2(r+s-1)^{1/m}\right\rceil-1.
\end{align*}
\end{theorem}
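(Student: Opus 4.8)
The plan is to adapt the two halves of the proof of Theorem~\ref{thm:boolcupcap} to each grid family. Both families are universal: $h([\ell]^n)=n(\ell-1)+1$ and $h([n]^m)=m(n-1)+1$ grow without bound, so the Ramsey numbers exist, and the relevant cardinalities are $|[\ell]^n|=\ell^n$ and $|[n]^m|=n^m$.

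\textbf{Lower bounds.} As in the proof of Proposition~\ref{prop:chainboundbool}, set $n=\PR{\Q(\ell)}^2(\vee_r,\wedge_s)$ and let $c$ be any $2$-coloring of the $2$-chains of $C_{\ell^n}$. Fixing a linear extension $\pi\colon[\ell]^n\to C_{\ell^n}$ (which exists since $|[\ell]^n|=\ell^n$) and pulling $c$ back along $\pi$ gives a $2$-coloring of the grid; by the definition of $n$ it has a monochromatic copy of $\vee_r$ or $\wedge_s$, and since $\pi$ carries $2$-chains to $2$-chains this copy pushes forward to a monochromatic copy in $C_{\ell^n}$. Hence $\CR^2(\vee_r,\wedge_s)\leq\ell^n$, i.e.\ $\PR{\Q(\ell)}^2(\vee_r,\wedge_s)\geq\log_\ell\CR^2(\vee_r,\wedge_s)$; the identical argument with $[n]^m$ and $C_{n^m}$ gives $\PR{\cH(m)}^2(\vee_r,\wedge_s)\geq\CR^2(\vee_r,\wedge_s)^{1/m}$. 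Now substitute the value of $\CR^2(\vee_r,\wedge_s)$ from Theorem~\ref{thm:cpcapcup}.

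\textbf{Upper bounds.} Here I would transcribe the double-counting argument of Theorem~\ref{thm:boolcupcap}, replacing $B_N$ by the appropriate grid. Suppose a $2$-coloring of $[\ell]^N$ (resp.\ $[N]^m$) has no $1$-colored $\vee_r$ and no $2$-colored $\wedge_s$; then $|\up_1(v)|\leq r-1$ and $|\down_2(v)|\leq s-1$ for every $v$, and because $\down(v)$ for $v=(v_1,\dots,v_k)$ is itself a grid of size $\prod_i v_i$, we get $|\down_1(v)|\geq\prod_i v_i-s$. Let $W$ be the grid with its top element deleted and $T=\{v\in W:|\up_1(v)\cap W|=r-1\}$; as before every $v\in T$ is joined to the top element by a color-$2$ edge, so $|T|\leq s-1$. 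Letting $b$ count the color-$1$ edges inside $W$, summing over bottom endpoints gives $b\geq\sum_{v\in W}\bigl(\prod_i v_i-s\bigr)$, while summing over top endpoints gives $b\leq|T|+(\ell^N-1)(r-2)\leq(s-1)+(\ell^N-1)(r-2)$ (with $N^m$ in place of $\ell^N$ in the other family). The one new ingredient is the identity $\sum_{v\in[\ell]^N}\prod_i v_i=\bigl(\frac{\ell(\ell+1)}{2}\bigr)^N$ (resp.\ $\sum_{v\in[N]^m}\prod_i v_i=\bigl(\frac{N(N+1)}{2}\bigr)^m$), which plays the role of $3^N$, together with $|W|+1=\ell^N$ (resp.\ $N^m$) in place of $2^N$. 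Combining the two bounds on $b$ and dividing through by $\ell^N$ (resp.\ $N^m$)---the point being that $\bigl(\frac{\ell(\ell+1)}{2}\bigr)^N=\ell^N\bigl(\frac{\ell+1}{2}\bigr)^N$, so the $\ell^N$ cancels exactly as the $2^N$ did---leaves $\bigl(\frac{\ell+1}{2}\bigr)^N\leq(r+s-1)-(r-1)\ell^{-N}<r+s-1$, respectively $\bigl(\frac{N+1}{2}\bigr)^m<r+s-1$. The former contradicts $N=\lceil\log_{(\ell+1)/2}(r+s-1)\rceil$ directly; the latter rearranges to $N+1<2(r+s-1)^{1/m}$, contradicting $N=\lceil 2(r+s-1)^{1/m}\rceil-1$.

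\textbf{Main obstacle.} There is no substantive obstacle beyond bookkeeping. The only genuinely new computation is evaluating $\sum_v\prod_i v_i$ and confirming that the factor $\ell^N$ (resp.\ $N^m$) cancels cleanly, so that the surviving inequality is governed by the base $\frac{\ell+1}{2}$ (resp.\ $\frac{N+1}{2}$); after that, converting the final inequality into the stated ceiling expression is routine. Everything else is a line-by-line copy of the arguments already in the paper.
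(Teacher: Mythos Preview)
Your proposal is correct and is exactly what the paper intends: it states that the proofs are identical to their analogues in the Boolean lattice, and your adaptation---replacing $2^N$ by $\ell^N$ (resp.\ $N^m$), $3^N$ by $\bigl(\tfrac{\ell(\ell+1)}{2}\bigr)^N$ (resp.\ $\bigl(\tfrac{N(N+1)}{2}\bigr)^m$), and observing the clean cancellation that leaves base $\tfrac{\ell+1}{2}$ (resp.\ $\tfrac{N+1}{2}$)---is precisely the intended transcription. The lower bounds are likewise the size-based half of Proposition~\ref{prop:chainbound} combined with Theorem~\ref{thm:cpcapcup}, as you have them.
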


\begin{theorem}[Analogue of Theorem \ref{thm:booldiamondcup}]
For $s,r\geq 2$,
\begin{align*}
\PR{\Q(\ell)}^2(\diamond_s,\vee_r)\leq\PR{\Q(\ell)}^2(\wedge_{s+r},\vee_r)\leq\left\lceil\log_{(\ell+1)/2}(2r+s-1)\right\rceil & \quad\text{and}\\
\PR{\cH(m)}^2(\diamond_s,\vee_r)\leq\PR{\cH(m)}^2(\wedge_{s+r},\vee_r)\leq  \left\lceil 2(2r+s-1)^{1/m}\right\rceil-1.
\end{align*}
\end{theorem}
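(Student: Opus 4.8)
The plan is to rerun the proof of Theorem~\ref{thm:booldiamondcup} almost verbatim inside each grid family, letting the unique minimum element of the grid play the role that $\varnothing$ played in $B_N$. For $\Q(\ell)$ the ambient poset is $Q_N(\ell)=[\ell]^N$ with minimum $\hat 0=(1,\dots,1)$, and for $\cH(m)$ it is $H_N(m)=[N]^m$, again with a unique minimum; since the only feature of $B_N$ that the original argument uses is that $\varnothing$ is comparable below every other element, this substitution should go through without friction.

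First I would clear away the two right-hand inequalities. Applying the preceding analogue of Theorem~\ref{thm:boolcupcap} to the pair $(\vee_r,\wedge_{s+r})$ and using $r+(s+r)-1=2r+s-1$ gives exactly $\PR{\Q(\ell)}^2(\wedge_{s+r},\vee_r)\leq\lceil\log_{(\ell+1)/2}(2r+s-1)\rceil$ and $\PR{\cH(m)}^2(\wedge_{s+r},\vee_r)\leq\lceil 2(2r+s-1)^{1/m}\rceil-1$, so these require nothing beyond a citation.

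Next I would establish $\PR{\Q(\ell)}^2(\diamond_s,\vee_r)\leq N$, where $N:=\PR{\Q(\ell)}^2(\wedge_{s+r},\vee_r)$. Fix a $2$-coloring $c$ of the edges of $Q_N(\ell)=[\ell]^N$ that has no $r$-cup in color $2$; by the definition of $N$ it must contain an $(s+r)$-cap in color $1$, on $s+r$ bottom vertices together with a common top vertex $B$. If $\hat 0$ is among the bottom vertices, discard it; in any case we are left with at least $s+r-1$ bottom vertices $A_1,\dots,A_{s+r-1}$, each of which lies strictly between $\hat 0$ and $B$ and satisfies $c(A_iB)=1$. Among the edges $(\hat 0,A_1),\dots,(\hat 0,A_{s+r-1})$, at most $r-1$ can have color $2$, since $r$ such edges would produce a color-$2$ copy of $\vee_r$ rooted at $\hat 0$; hence at least $s$ of them have color $1$. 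For those $s$ indices $i$ the map $x\mapsto\hat 0$, $y_i\mapsto A_i$, $z\mapsto B$ is a weak embedding of the $s$-diamond poset whose Hasse edges $(x,y_i)$ and $(y_i,z)$ all land on color-$1$ edges (no pair $(\hat 0,B)$ is required), i.e.\ a color-$1$ copy of $\diamond_s$. The $\cH(m)$ statement is proved by the identical argument inside $H_N(m)=[N]^m$ with its minimum element in place of $\hat 0$.

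I do not expect a real obstacle here: the whole thing is a transcription of an existing proof, and the only spots deserving care are (i) quoting the analogue of Theorem~\ref{thm:boolcupcap} with the substitution $\{r,s\}\mapsto\{r,s+r\}$ so that the logarithm's argument comes out as $2r+s-1$, and (ii) the trivial bookkeeping when the grid's minimum element happens to be one of the cap's bottom vertices, which is dealt with exactly as in the Boolean proof. The pigeonhole step and the check that the resulting configuration is a genuine weak-embedded copy of the Hasse-diagram pograph $\diamond_s$ transfer unchanged.
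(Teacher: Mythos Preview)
Your proposal is correct and follows essentially the same approach as the paper, which simply states that the proofs are identical to their Boolean-lattice analogues. Your transcription of the argument from Theorem~\ref{thm:booldiamondcup}, replacing $\varnothing$ by the grid's minimum element and citing the analogue of Theorem~\ref{thm:boolcupcap} with the substitution $s\mapsto s+r$, is exactly what is intended.
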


\begin{theorem}[Analogue of Theorem \ref{thm:booldiamondupper}]
For $s,r\geq 2$,
\begin{align*}
\PR{\Q(\ell)}^2(\diamond_s,\diamond_r)\leq\PR{\Q(\ell)}^2(\diamond_r,\vee_{s+r-1})+\lceil\log_{\ell}(2s+2r)\rceil\leq 2\left\lceil\log_{(\ell+1)/2}(2r+2s-1)\right\rceil & \quad\text{and}\\
\PR{\cH(m)}^2(\diamond_s,\diamond_r)\leq\PR{\cH(m)}^2(\diamond_s,\vee_{s+r-1})+\left\lceil(2s+2r)^{1/m}\right\rceil\leq  3\left\lceil(2r+2s-1)^{1/m}\right\rceil.
\end{align*}
\end{theorem}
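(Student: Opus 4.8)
The plan is to mimic the proofs of Theorems~\ref{thm:booldiamondcup} and~\ref{thm:booldiamondupper}, replacing the ``bottom cube / top interval'' decomposition of the Boolean lattice $[2]^{N+M}$ by the corresponding decomposition of a grid. In each line only the first inequality is substantive; the displayed closed forms will follow from the already-established analogue of Theorem~\ref{thm:booldiamondcup} together with routine manipulations of logarithms, $m$-th roots, and ceilings, exactly as in Theorem~\ref{thm:booldiamondupper}. (The chain case, $m=1$, is already covered by Theorem~\ref{thm:chaindiamond}.)

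The key structural observation is that a grid decomposes as a down-set ``above'' which sits an up-set that is itself a small grid whose maximum coincides with the ambient maximum. For $\Q(\ell)$, take (after relabeling so that $r\ge s$) $N=\PR{\Q(\ell)}^2(\diamond_r,\vee_{s+r-1})$ and $M=\lceil\log_\ell(2s+2r)\rceil$, and work in $Q_{N+M}(\ell)=[\ell]^{N+M}$; the element $v^\ast=(\ell,\dots,\ell,1,\dots,1)$ with $N$ coordinates equal to $\ell$ and $M$ equal to $1$ satisfies $\down(v^\ast)\cong[\ell]^N=Q_N(\ell)$ and $\up(v^\ast)\cong[\ell]^M$, the latter having minimum $v^\ast$ and maximum the global maximum $\mathbf 1$, and since $\ell^M\ge 2s+2r$ the open interval $(v^\ast,\mathbf 1)$ has at least $2(s+r-1)$ elements. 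For $\cH(m)$, take (after relabeling so that $s\ge r$) $N=\PR{\cH(m)}^2(\diamond_s,\vee_{s+r-1})$ and $K=\lceil(2s+2r)^{1/m}\rceil$, and work in $H_{N+K}(m)=[N+K]^m$; here $v^\ast=(N,\dots,N)$ gives $\down(v^\ast)\cong[N]^m=H_N(m)$ and $\up(v^\ast)\cong\{N,\dots,N+K\}^m\cong H_{K+1}(m)$ with maximum $\mathbf 1$, and $(K+1)^m\ge K^m\ge 2s+2r$ again forces $|(v^\ast,\mathbf 1)|\ge 2(s+r-1)$. Note that in the second case the side length grows only by $K$, not $K+1$, because the coordinate value $N$ is shared by the two pieces; this overlap is responsible for the factor $3$ (rather than $2$) in the $\cH(m)$ bound.

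Given this decomposition the combinatorial core is verbatim that of Theorem~\ref{thm:booldiamondupper}. Suppose for contradiction that a $2$-colouring $c$ of the ambient grid has no $s$-diamond in colour $1$ and no $r$-diamond in colour $2$. Split the $\ge 2(s+r-1)$ elements $Z\in(v^\ast,\mathbf 1)$ according to the colour of the edge $(Z,\mathbf 1)$; pigeonhole yields a monochromatic class $I$ of size $\ge s+r-1$. With the recursion parameter chosen as above (this is where the symmetry $r\leftrightarrow s$, $1\leftrightarrow 2$ is used to reduce to the ``right'' colour), the down-set $\down(v^\ast)$ is a copy of the appropriate family member avoiding an $s$-diamond in colour $1$, so by definition of $N$ it contains an $(s+r-1)$-cup $A_0\prec B_1,\dots,B_{s+r-1}$ in colour $2$. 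Since $B_j\preceq v^\ast\prec Z$ for every $j$ and every $Z\in I$, the pairs $\{(B_j,Z)\}$ form a $2$-coloured $K_{s+r-1,\,s+r-1}$; each $B_j$ sends at most $s-1$ colour-$1$ edges into $I$ (else $B_j\prec Z_1,\dots,Z_s\prec\mathbf 1$ is a colour-$1$ $s$-diamond), and each $Z\in I$ receives at most $r-1$ colour-$2$ edges from the $B_j$'s (else $A_0\prec B_{j_1},\dots,B_{j_r}\prec Z$ is a colour-$2$ $r$-diamond, using that the edges $A_0 B_{j_i}$ lie in the colour-$2$ cup). Hence this bipartite graph has at most $(s+r-1)(s+r-2)<(s+r-1)^2$ edges, a contradiction, which proves the first inequality. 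Substituting the analogue of Theorem~\ref{thm:booldiamondcup} for $\PR{\Q(\ell)}^2(\diamond_r,\vee_{s+r-1})$, respectively $\PR{\cH(m)}^2(\diamond_s,\vee_{s+r-1})$, and simplifying using $\log_\ell x\le\log_{(\ell+1)/2}x$, elementary estimates for $x\mapsto x^{1/m}$, and $\lceil x\rceil<x+1$, gives the second inequality in each line.

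Since the combinatorial argument is essentially a translation of the Boolean-lattice case, there is no deep obstacle; the points that need care are (i) verifying that $\up(v^\ast)$ really is a copy of a grid whose maximum equals the ambient maximum $\mathbf 1$ --- this is what makes the ``top'' edges $(Z,\mathbf 1)$ available and lets the diamonds close off at the top --- and (ii) the bookkeeping for the closed forms, where one must track the asymmetry between the two diamond parameters (hence the relabeling above, which forces the monotone-larger diamond into the recursion), the change of logarithm base for $\Q(\ell)$, and the length-overlap of one in the $\cH(m)$ case that inflates $2\lceil\cdot\rceil$ to $3\lceil\cdot\rceil$.
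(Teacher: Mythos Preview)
Your proposal is correct and follows exactly the approach the paper intends: the paper's own ``proof'' is simply the sentence ``The proof of each of these theorems are identical to their analogues in the Boolean lattice,'' and your write-up carries out precisely that translation, supplying the grid decomposition (via the element $v^\ast$) that plays the role of the split $B_{N+M}=\down([N])\cup\up([N])$ in Theorem~\ref{thm:booldiamondupper}. Your treatment of the two pigeonhole cases via the $r\leftrightarrow s$, colour-swap symmetry, and of the closed forms via the analogue of Theorem~\ref{thm:booldiamondcup}, matches the level of detail in the original Theorem~\ref{thm:booldiamondupper}.
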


The proof of each of these theorems are identical to their analogues in the Boolean lattice. Notice that in each case, the Ramsey number is within a constant factor of the lower bound given in Proposition \ref{prop:chainbound}. It would be of interest to explore other partially-ordered Ramsey numbers using $\Q(\ell)$ or $\cH(m)$ as the host family.
%Additionally, an exploration of partially-ordered Ramsey numbers in any other host family would be very interesting as each host family will likely require very different techniques.
%This can be observed even in the transition from chain Ramsey numbers to Boolean Ramsey numbers, as we lose the full power of the pigeonhole principle given that even if a vertex has a large neighborhood in the Boolean graph, we do not necessarily know what graphs are induced by this neighborhood.
%Even if we attain a good understanding of Boolean Ramsey numbers, the transition from Boolean Ramsey numbers to $[k]^n$-Ramsey numbers will require new techniques as $[k]^n$ loses many essential symmetries that may be useful to exploit in the Boolean lattice.

\section{Future Work}

For 1-uniform Boolean Ramsey numbers, the main open question is to determine $\BR^1_t(B_d)$. We showed that $td\leq\BR^1_t(B_d)\leq 2td^2$ and that $\BR^1_2(B_d)\geq 2d+1$ for $3\leq d\leq 8$ and $d\geq 13$. It is reasonable to expect that if $|P|$ is large compared to $\levels(P)$, then $\BR^1_t(P)$ is closer to the lower bound of $t\cdot\levels(P)$ given in Proposition~\ref{prop:1ulow}. To this end, we pose the following two questions.

\begin{question}
Is $\BR^1_t(B_d)$ linear in $d$?
\end{question}

\begin{question}
For a poset $P$, is there a constant $c=c(P)$ such that $\BR^1_t(P)=t\cdot\levels(P)+c$?
\end{question}

It is important to point out that Theorem \ref{thm:mlubell} employed only the bound $\lu_n^{(m)}(\F)\leq\max_{\C_m}|\F\cap\C_m|$. 
It would be interesting to explore the actual value of $L_n^{(m)}(P)$ for specific posets $P$ as, in addition to implying bounds on 1-uniform Boolean Ramsey numbers, it may also lead to improvements on the bounds in the Tur\'an problem.

When it comes to higher uniformities, we are especially interested in the properties of partially-ordered graphs whose Boolean Ramsey numbers are within a constant factor of the lower bound given in Proposition \ref{prop:chainboundbool}.
In particular, we ask the following.

\begin{question}
What properties must a graph $G$ have so that the lower bound on the Boolean Ramsey number of $G$ given in Proposition \ref{prop:chainboundbool} is tight up to a constant?
\end{question}

We suspect that the answer to this question will focus on the properties of the underlying poset of $G$ and have very little to do with the actual edges of $G$.
In particular, we suspect that the answer relies heavily on the number and/or size of the antichains in the underlying poset.

Beyond this, an exploration of $\BR_t^2(B_d)$ is of great interest.
By applying the well-known bounds on $\R_2^2(K_d)$, we immediately observe that $\Omega(2^{d/2})\leq\BR_2^2(B_d)\leq O(4^{2^d})$.
We believe the upper bound to be far from the truth and would expect only an exponential bound, but any improvement to either bound would be of interest.

Finally, we pose the following question.

\begin{question}
For a $k$-uniform pograph $G$, what is the least integer $N$ such that any $t$-coloring of the $k$-chains of $B_N$ contains a monochromatic copy of $G$ such that the underlying poset of $G$ is induced?
\end{question}

For $1$-uniform pographs, this question has received attention; however, for $k\geq 2$, it is not even obvious that such an $N$ exists.

\section*{Acknowledgements}

The authors would like to thank Mikhail Lavrov for recommending SAT solvers as a method for computing small Boolean Ramsey numbers.
Thanks also to Maria Axenovich for discussing previous work on induced Boolean Ramsey numbers.

\bibliographystyle{abbrv}
\bibliography{references}

\end{document}